\documentclass[a4paper, 12pt]{scrartcl}

\usepackage{cite}
\usepackage[english]{babel}
\usepackage[utf8]{inputenc}
\usepackage[T1]{fontenc}
\usepackage{lmodern}
\usepackage{amsmath, amsthm, amssymb, mathtools}
\usepackage[fixlanguage]{babelbib}
\selectbiblanguage{english}
\usepackage[pdftex]{graphicx,color}
\usepackage{float}
\usepackage{caption}
\usepackage{subcaption}
\usepackage[dvipsnames,svgnames]{xcolor} 
\usepackage{tikz,pgf}
\usetikzlibrary{calc}
\usepackage{tkz-euclide}
\usetkzobj{all} 
\usepackage[colorlinks=true,linkcolor=RoyalBlue,citecolor=PineGreen]{hyperref}

\usepackage{breqn}

\usepackage{booktabs}  
\usepackage{multirow}

\usepackage[textwidth=2\marginparwidth]{todonotes}

\usepackage{algorithm,algorithmicx,algpseudocode}

\newcommand{\blue}{\text{blue}}
\newcommand{\red}{\text{red}}

\newcommand{\norm}[1]{\left\lVert#1\right\rVert}

\DeclareMathOperator{\Upairs}{U}
\newcommand{\upairs}[1]{\Upairs(#1)}

\DeclareMathOperator{\CDC}{CDC}
\newcommand{\cdc}[1]{\CDC(#1)}

\DeclareMathOperator{\NAC}{NAC}
\newcommand{\nac}[1]{\NAC_{#1}}
\newcommand{\nacC}[2]{\NAC_{#1}(#2)}

\newcommand{\RR}{\mathbb{R}}
\newcommand{\CC}{\mathbb{C}}
\newcommand{\QQ}{\mathbb{Q}}
\newcommand{\ci}{i}

\newcommand{\C}{\mathcal C}
\newcommand{\setreal}[2]{\mathcal R(#1,#2)}

\newcommand{\conj}[1]{\overline{#1}}

\newtheorem{thm}{Theorem}[section]
\newtheorem{lem}[thm]{Lemma}
\newtheorem{cor}[thm]{Corollary}
\newtheorem*{rem}{Remark}

\theoremstyle{definition}
\newtheorem{defn}[thm]{Definition}
\newtheorem{exmp}[thm]{Example}

\newcommand{\implstyle}[1]{\texttt{#1}}

\newcommand{\FlexRiLoG}{\implstyle{FlexRiLoG}}
\newcommand{\sage}{\implstyle{SageMath}} 

\tikzstyle{vertex}=[circle, draw, fill=black, inner sep=0pt, minimum size=4pt]
\tikzstyle{smallvertex}=[circle, draw, fill=black, inner sep=0pt, minimum size=2pt]
\tikzstyle{edge}=[line width=1.5pt,black!50!white]
\tikzstyle{gridl}=[black!50!white]

\tikzstyle{lnode}=[circle,white,draw=black!60!white,fill=black!60!white,inner sep=1pt, font=\scriptsize]
\tikzstyle{redge}=[edge,Red]
\tikzstyle{bedge}=[edge,NavyBlue]

\colorlet{col1}{LimeGreen}
\colorlet{col2}{Orchid}
\colorlet{col3}{Orange}
\colorlet{col4}{Cerulean}
\colorlet{col5}{SlateGrey}
\colorlet{col6}{LightGray}

\begin{document}

\title{Graphs with Flexible Labelings allowing Injective Realizations}

\author{%
Georg Grasegger\thanks{Johann Radon Institute for Computational and Applied Mathematics (RICAM), Austrian Academy of Sciences}
\and 
Jan Legersk\'y\thanks{Research Institute for Symbolic Computation (RISC), Johannes Kepler University Linz}
\and 
Josef Schicho\footnotemark[2]
}

\date{}

\maketitle

\footnotetext{
	\mbox{}\\[0.5ex]
	\begin{minipage}[t]{0.8\textwidth}
		This project has received funding from the European Union's Horizon~2020 research and innovation programme under the Marie Sk\l{}odowska-Curie grant agreement No~675789.
	\end{minipage}\hfill 
	\begin{minipage}[t]{0.12\textwidth}
		\vspace{-1em}
		\includegraphics[width=0.95\textwidth]{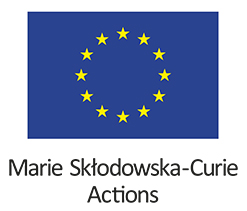}
	\end{minipage}\\
	The project was partially supported by the Austrian Science Fund (FWF): P31061, P31888, W1214-N15 (project DK9).
}

\begin{abstract}
	We consider realizations of a graph in the plane such that the distances
	between adjacent vertices satisfy the constraints given by an edge labeling.
	If there are infinitely many such realizations, counted modulo rigid motions,
	the labeling is called flexible.
	The existence of a flexible labeling, possibly non-generic,
	has been characterized combinatorially by the existence of a so called NAC-coloring.
	Nevertheless, the corresponding realizations are often non-injective.
	In this paper, we focus on flexible labelings with infinitely many injective realizations.
	We provide a necessary combinatorial condition on existence of 
	such a labeling based also on NAC-colorings of the graph.
	By introducing new tools for the construction of such labelings,
	we show that the necessary condition is also sufficient up to 8 vertices,
	but this is not true in general for more vertices.
\end{abstract}

\section{Introduction}
A widely studied question in Rigidity Theory is the number of realizations of a graph in $\RR^2$ 
such that the distances of adjacent vertices are equal to a given labeling of edges by positive real numbers.
Such a labeling is called flexible if the number of realizations, counted modulo rigid transformations, is infinite.
Otherwise, the labeling is called rigid.
We call a graph movable if there is a flexible labeling with infinitely many injective realizations, modulo rigid transformations.
In other words, we disallow realizations that identify two vertices; we do not care if edges intersect or even if edges
overlap in a line segment.
One can model such a movable graph as a planar linkage, where the vertices are rotational joints and the edges correspond to links of the length given by the labeling.

A result of Pollaczek-Geiringer~\cite{Geiringer1927}, rediscovered by Laman~\cite{Laman1970},
shows that a generic realization of a graph defines a rigid labeling 
if and only if the graph contains a Laman subgraph with the same set of vertices.
A graph $G=(V_G,E_G)$ is called Laman if $|E_G| = 2|V_G|-3$, and $|E_H|\leq 2|V_H|-3$
for all subgraphs $H$ of $G$ on at least two vertices.
Hence, if a graph is not spanned by a Laman graph, then a generic labeling is flexible, i.e., the graph is movable.

The study of movable overconstrained graphs has a long history.
Two ways of making the bipartite Laman graph $K_{3,3}$ movable
 were given by Dixon more than one hundred years ago~\cite{Dixon, WunderlichNineBar, Stachel}.
The first one works for any bipartite graph, placing the vertices of one part on the $x$-axis and of the other on the $y$-axis.
The second construction applies to $K_{4,4}$ and hence also to $K_{3,3}$.
Walter and Husty proved that these two give all flexible labelings of $K_{3,3}$ with injective realizations~\cite{WalterHusty}.
Other constructions are Burmester's focal point mechanisms~\cite{Burmester1893}, a graph with 9 vertices and 16 edges,
and two constructions by Wunderlich~\cite{Wunderlich1954,Wunderlich1981} for bipartite graphs based on geometric theorems.

The main question in this paper is the following: is a given graph movable?
In \cite{flexibleLabelings}, we already provide a combinatorial characterization of graphs
with a flexible labeling: there is a flexible labeling if and only if the graph has a so called NAC-coloring.
A NAC-coloring is a coloring of edges by two colors such that in every cycle,
either all edges have the same color or there are at least two edges of each color.
Many Laman graphs indeed have a NAC-coloring, but the corresponding realizations are in general not injective,
 i.e., in order to be flexible, some non-adjacent vertices coincide.
Here, we are more restrictive -- infinitely many realizations of a movable graph must be injective.

We give a necessary combinatorial condition on a graph being movable,
based on the concept of NAC-colorings.
The idea is that edges can be added to a graph if their endpoints are connected by a path that is monochromatic in every NAC-coloring,
without having effect on being movable. 
If the augmented graph does not have any NAC-coloring, it cannot be movable as it has no flexible labeling.
On the other hand, we provide constructions making some graphs movable.
They are based on NAC-colorings or combining movable subgraphs.
In combination with the necessary condition, we give a complete list of movable graphs up to 8 vertices.
Animations with the movable graphs can be found in~\cite{animations}.
The implementation of the concepts introduced in this paper is part of the \sage{} package \FlexRiLoG{} \cite{flexrilog}.

Figure~\ref{fig:movableVsFlexible} provides some examples illustrating the results:
the left graph has no NAC-coloring, hence, it has no flexible labeling.
The graph in the middle has a NAC-coloring, namely, it has a flexible labeling,
but it is not movable since it does not satisfy the necessary condition based on 
augmenting by edges whose endpoints are connected by a monochromatic path.
In other words, all motions require some vertices to coincide.
The third graph is movable using one of our constructions.

The structure of the paper is the following. In Section~\ref{sec:preliminaries},
we specify the system of equations describing the problem and recall the definition of NAC-coloring and some previous results.
A few technical lemmas about NAC-colorings are also proven.
In Section~\ref{sec:combinatorialCondition}, we prove the necessary combinatorial condition on being movable.
We list all graphs spanned by a Laman graph up to 8~vertices that satisfy it.
All these graphs are shown to be movable in Section~\ref{sec:constructions}.
Moreover, an example that the necessary condition is not sufficient is also presented.

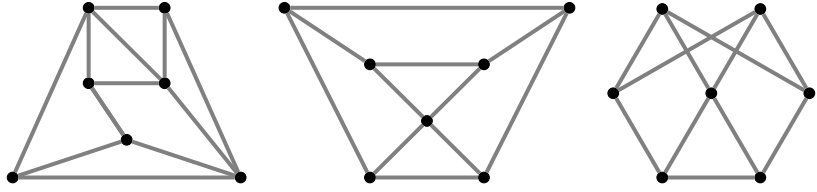
\begin{figure}[htb]
	\centering
	\begin{tabular}{ccc}
		\begin{tikzpicture}[scale=1]
			\node[vertex] (a) at (-0.5,-0.75) {};
			\node[vertex] (b) at (0.5,0.5) {};
			\node[vertex] (c) at (1.5,0.5) {};
			\node[vertex] (d) at (2.5,-0.75) {};
			\node[vertex] (e) at (0.5,1.5) {};
			\node[vertex] (f) at (1.5,1.5) {};
			\node[vertex] (g) at (1,-0.25) {};
			\draw[edge] (g)edge(b) (a)edge(e)   (g)edge(b)  (b)edge(c) (b)edge(e) (c)edge(d) ;
			\draw[edge] (c)edge(e) (c)edge(f) (d)edge(f) (e)edge(f) (a)edge(d) (g)edge(a) (g)edge(d);
		\end{tikzpicture}
		&
		\begin{tikzpicture}[scale=0.75]
		      \node[vertex] (1) at (-1,-1) {};
		      \node[vertex] (2) at (1,-1) {};
		      \node[vertex] (3) at (0,0) {};
		      \node[vertex] (4) at (-1,1) {};
		      \node[vertex] (5) at (1,1) {};
		      \node[vertex] (6) at (-2.5,2) {};
		      \node[vertex] (7) at (2.5,2) {};
		      \draw[edge] (1)edge(2) (1)edge(3) (2)edge(3)  (3)edge(5) (3)edge(4) (4)edge(5)  (5)edge(7) (2)edge(7);
		      \draw[edge] (1)edge(6) (4)edge(6) (6)edge(7);
		\end{tikzpicture}
		&
		\begin{tikzpicture}[scale=1.29]
		    \node[vertex] (5) at (0.500, 0.866) {};
		    \node[vertex] (4) at (-0.500, 0.866) {};
		    \node[vertex] (6) at (-1.00, 0.000) {};
		    \node[vertex] (3) at (1.00, 0.000) {};
		    \node[vertex] (2) at (0.500, -0.866) {};
		    \node[vertex] (0) at (-0.500, -0.866) {};
		    \node[vertex] (1) at (0.000, 0.000) {};
		    \draw[edge] (1)edge(2)  (1)edge(5)  (3)edge(5)  (2)edge(3)  (0)edge(2)  (4)edge(6)  (0)edge(1)    ;
		    \draw[edge] (0)edge(6)  (1)edge(4)  (3)edge(4)  (5)edge(6)   ;
		\end{tikzpicture}
	\end{tabular}
	\caption{The left graph has no flexible labeling, 
	the middle one has a flexible labeling, but it is not movable, and the right one is movable}
	\label{fig:movableVsFlexible}
\end{figure}

\section{Preliminaries}   \label{sec:preliminaries}
In the whole paper, all graphs are assumed to be connected and containing at least one edge.
We denote the set of vertices of a graph $G$ by $V_G$ and the set of edges by $E_G$.\linebreak[1]
In this section, we recall the definition of NAC-coloring and flexible labeling of a graph.
Next, we introduce the notion of proper flexible labeling and movable graph by the requirement of injective realizations.
We define an algebraic motion of a graph with a flexible labeling and assign a certain set of active NAC-colorings to this motion.
These active NAC-colorings come from the proof of the theorem characterizing the existence of a flexible labeling.
The active NAC-colorings are illustrated on the motion of a deltoid.
The section concludes with three lemmas,
which guarantee that the introduced notions are independent of certain choices of edges and interchanging colors.

\begin{defn}
	Let~$G$ be a graph and $\delta\colon  E_G\rightarrow \{\text{\blue{}, \red{}}\}$ be a coloring of edges.
	\begin{enumerate}
		\item A path, resp.\ cycle, in~$G$ is called \emph{monochromatic}, if all its edges have the same color.
		\item A cycle in~$G$ is an \emph{almost \red{} cycle}, resp.\ \emph{almost \blue{} cycle},
				if exactly one of its edges is \blue{}, resp.\ \red{}.
	\end{enumerate}
	A coloring~$\delta$ is called a \emph{NAC-coloring}, if it is surjective and there are no almost \blue{} cycles or almost \red{} cycles in~$G$.
	In other words every cycle is either monochromatic or contains at least 2 edges in each color.
	The set of all NAC-colorings of $G$ is denoted by $\nac{G}$.
\end{defn}
Now, the abbreviation NAC can be explained -- it stands for ``No Almost Cycle''.
Clearly, if we permute red and blue in a NAC-coloring of $G$, we obtain another NAC-coloring of~$G$.
\begin{defn}
	Let $G$ be a graph.
	If $\delta, \conj{\delta} \in \nac{G}$ are such that $\delta(e)=\blue{} \iff \conj{\delta}(e)=\red{}$ for all $e\in E_G$,
	then they are called \emph{conjugated}.
\end{defn}

The following definition describes the constraints on a realization in the plane given by a labeling of edges.
The realizations must be counted properly, i.e., modulo rigid motions, in order to say whether the labeling is flexible.
\begin{defn}
	Let~$G$ be a graph such that $|E_G|\geq 1$ and let $\lambda\colon E_G\rightarrow \RR_+$ be an edge labeling of~$G$.
	A map $\rho=(\rho_x,\rho_y)\colon V_G\rightarrow \RR^2$ is a \emph{realization of~$G$ compatible
	with~$\lambda$} iff $\norm{\rho(u)-\rho(v)}=\lambda(uv)$ for all edges~$uv\in E_G$.
	We say that two realizations~$\rho_1$ and~$\rho_2$ are congruent
	iff there exists a direct Euclidean isometry~$\sigma$ of~$\RR^2$ such that $\rho_1=\sigma \circ\rho_2$.

	The labeling $\lambda$ is called \emph{flexible}
	if the number of realizations of~$G$ compatible with~$\lambda$ up to congruence is infinite.
\end{defn}

We remark that if a labeling has a positive finite number of realizations, then it is called \emph{rigid}.

The constraints given by edge lengths~$\lambda_{uv}=\lambda(uv)$ can be modeled 
by the following system of equations for coordinates $(x_u,y_u)$ for $u\in V_G$.
In order to remove rigid motions, the position of an edge $\bar{u}\bar{v}$ is fixed:
\begin{align} \label{eq:mainSystemOfEquations}
	x_{\bar{u}}=0\,, \quad	y_{\bar{u}}&=0\,, \nonumber \\
	x_{\bar{v}}=\lambda_{\bar{u}\bar{v}}\,, \quad y_{\bar{v}}&=0\,, \\
	(x_u-x_v)^2+(y_u-y_v)^2&= \lambda_{uv}^2 \quad \text{ for all } uv \in E_G\setminus\{\bar{u}\bar{v}\}.	\nonumber
\end{align}
The labeling $\lambda$ is flexible if and only if there are infinitely many solutions of the system.

So far, the realizations have not been required to be injective. 
Namely, it could happen that two non-adjacent vertices were mapped to the same point in $\RR^2$.
Sections~\ref{sec:combinatorialCondition} and~\ref{sec:constructions} are focused on the graphs that have a 
labeling with infinitely many injective compatible realizations.
This corresponds to adding the inequalities	$(x_u-x_v)^2+(y_u-y_v)^2\neq 0$ for all $u,v \in V_G$
such that $u\neq v$ and $uv\notin E_G$.

\begin{defn}
	A flexible labeling $\lambda$ of a graph $G$ is called \emph{proper},
	if there exists infinitely many injective realizations $\rho$ of $G$ compatible with $\lambda$, modulo rigid transformations.
	We say that a graph is \emph{movable} if it has a proper flexible labeling.
\end{defn}
We remark that graphs that are not movable are called \emph{absolutely 2-rigid} in~\cite{Maehara1998}. 
Considering irreducible components of the solution set of the equation~\eqref{eq:mainSystemOfEquations}
allows us to use the notion of a function field, whose valuations give rise to a relation with NAC-colorings,
as we will see later.
\begin{defn}
  Let $\lambda$ be a flexible labeling of $G$.
  Let $\setreal{G}{\lambda}\subseteq (\RR^2)^{V_G}$ be the set of all realizations of $G$ compatible with $\lambda$.
  We say that $\C$ is an \emph{algebraic motion of $(G,\lambda)$ w.r.t.\ an edge $\bar{u}\bar{v}$},
  if it is an irreducible algebraic curve in $\setreal{G}{\lambda}$,
  such that $\rho(\bar{u})=(0,0)$ and $\rho(\bar{v})=(\lambda(\bar{u}\bar{v}),0)$ for all $\rho\in \C$.
  Since in many situations the role of  $\bar{u}\bar{v}$ does not matter, we also simply say that $\C$ is an algebraic motion of $(G,\lambda)$.
  We call $F(\C)$ the complex function field of $\C$.
\end{defn}
The fact that the choice of the fixed edge does not change the function field is proven at the end of this section.
The functions in the function field related to NAC-colorings are given by the following definition.

\begin{defn}
	Let $\lambda$ be a flexible labeling of a graph $G$. 
	Let $F(\C)$ be the complex function field of an algebraic motion $\C$ of $(G,\lambda)$. 
	For every $u,v\in V_G$ such that $uv\in E_G$, we define $W_{u,v}, Z_{u,v}\in F(\C)$ by
	\begin{align*}
		W_{u,v}&=(x_v-x_u) + \ci (y_v-y_u)\,, \\
		Z_{u,v}&=(x_v-x_u) - \ci (y_v-y_u)\,.
	\end{align*}	
	We use $W^{\bar{u}\bar{v}}_{u,v}$, resp.\ $Z^{\bar{u}\bar{v}}_{u,v}$, 
	if we want to specify that $\C$ is w.r.t.\ a fixed edge~$\bar{u}\bar{v}$.
\end{defn}
We remark that $W_{u,v}=-W_{v,u}$ and $Z_{u,v}=-Z_{v,u}$, i.e., they depend on the order of~$u,v$.
Using \eqref{eq:mainSystemOfEquations}, we have
\begin{align*}
	W_{\bar{u},\bar{v}}&=\lambda_{\bar{u}\bar{v}}\,,\quad Z_{\bar{u},\bar{v}}=\lambda_{\bar{u}\bar{v}}\,,\quad  \nonumber \\ 
	W_{u,v}Z_{u,v}&= \lambda_{uv}^2 \quad \text{ for all } uv \in E_G\,.
\end{align*}
By the definition of $W_{u,v}$ and $Z_{u,v}$, the equations 
\begin{equation*}
	\sum_{i=0}^n W_{u_i, u_{i+1}} =0\,, \qquad	\sum_{i=0}^n Z_{u_i, u_{i+1}} =0
\end{equation*}
hold for every cycle $(u_0,u_1, \dots ,u_n, u_{n+1}=u_0)$ in $G$. 
Recall that the valuation of a product is the sum of valuations and the valuation of a sum is the minimum of valuations.
A consequence is that if a sum of functions equals zero,
then there are at least two summands with the minimal valuation.
Since we consider only valuations trivial on~$\CC$,
$\nu(W_{u,v})=\nu(-W_{v,u})$ for a valuation $\nu$.
These, together with Chevalley's theorem (see~\cite{Deuring}),
are the main ingredients for one implication of the following theorem 
that was proven in~\cite{flexibleLabelings}.
\begin{thm}\label{thm:nacflexible}
	A connected graph~$G$ with at least one edge has a flexible labeling iff it has a NAC-coloring.
\end{thm}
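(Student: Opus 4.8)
The plan is to prove both implications separately, using the machinery of $W_{u,v}$, $Z_{u,v}$ and valuations on the function field $F(\C)$ that the excerpt has carefully set up.

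\medskip

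\textbf{The direction ``NAC-coloring $\Rightarrow$ flexible labeling''.} Here I would take a NAC-coloring $\delta$ and explicitly build a flexible labeling together with a one-parameter family of realizations. The idea is to ``split'' each vertex position into two coordinates governed by the two color classes. Concretely, fix a spanning tree $T$ of $G$ and think of the red and blue edges as controlling two independent translation directions. For a parameter $t$, I would assign to each vertex $v$ a position of the form $\rho_t(v) = (\text{red-displacement}, \text{blue-displacement})$, where the displacement in one coordinate only accumulates along edges of the corresponding color. Since $\delta$ is surjective there are edges of both colors, so varying $t$ genuinely moves the realization and does not merely apply a rigid motion, giving infinitely many non-congruent realizations. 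The key point making the edge lengths constant is precisely the NAC condition: I would check that in every cycle the contributions telescope because each cycle is either unicolor or balanced in each color, so each edge length $\lambda(uv)$ is well defined independently of $t$. I would then verify that this labeling is flexible by exhibiting the infinite family directly, which is the cleaner half of the argument.

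\medskip

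\textbf{The direction ``flexible labeling $\Rightarrow$ NAC-coloring''.} This is the harder implication and the one the excerpt has been setting up with the function-field and valuation apparatus. Given a flexible labeling $\lambda$, I would choose an algebraic motion $\C$, i.e.\ a positive-dimensional irreducible component of the solution set of~\eqref{eq:mainSystemOfEquations}, and pass to its function field $F(\C)$. Because the labeling is flexible, $F(\C)$ is a nontrivial transcendental extension, so it carries a nontrivial discrete valuation $\nu$ (this is where Chevalley's theorem enters, guaranteeing a valuation that is nonnegative on enough of the ring but takes a strictly positive or negative value somewhere, so that $\nu$ is not identically zero on the relevant generators). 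I would then define a coloring $\delta$ of the edges by comparing the valuations of the two functions attached to each edge: color $uv$ blue if $\nu(W_{u,v}) > \nu(Z_{u,v})$ and red if $\nu(W_{u,v}) < \nu(Z_{u,v})$. Since $W_{u,v}Z_{u,v} = \lambda_{uv}^2$ has valuation $0$, we have $\nu(W_{u,v}) = -\nu(Z_{u,v})$, so the coloring is well defined exactly when this common value is nonzero, and one must argue (using that the fixed edge has $\nu(W_{\bar u,\bar v}) = \nu(Z_{\bar u,\bar v}) = 0$ and that $\nu$ is nontrivial) that a valuation can be chosen making $\delta$ surjective.

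\medskip

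The crux is verifying the NAC property of this $\delta$, and this is where I expect the main obstacle to lie. For any cycle the relations $\sum W_{u_i,u_{i+1}} = 0$ and $\sum Z_{u_i,u_{i+1}} = 0$ hold, and the valuation of a vanishing sum forces the minimum valuation among its summands to be attained at least twice. Suppose a cycle were, say, almost blue, i.e.\ all edges blue except one red edge $e_0$. ``Blue'' means $\nu(W) > \nu(Z)$, so along blue edges the $Z$-functions have the strictly smaller valuation, while along the single red edge the roles reverse. I would derive a contradiction by applying the minimum-attained-twice principle to whichever of the two sum relations isolates $e_0$ as the unique minimizer, showing the sum cannot vanish. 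Making this bookkeeping airtight — tracking which of $\sum W = 0$ and $\sum Z = 0$ to use, handling the sign ambiguity $W_{u,v} = -W_{v,u}$ (which does not affect valuations), and ensuring the minimum is genuinely unique rather than merely one of several — is the delicate part, but it is exactly the combinatorial shadow of the valuation inequalities and should go through. The final step is to record that $\delta$, being surjective and free of almost-monochromatic cycles, is by definition a NAC-coloring.
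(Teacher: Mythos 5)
Your overall strategy is the one intended here: the paper does not prove this theorem itself but cites \cite{flexibleLabelings}, and the ingredients it records (the functions $W_{u,v},Z_{u,v}$, the cycle relations, ``a vanishing sum attains its minimal valuation at least twice'', Chevalley's theorem, and the construction of Lemma~\ref{lem:oneNac}) show you have identified the right machinery for both implications. However, each half of your sketch has a concrete defect. For ``NAC-coloring $\Rightarrow$ flexible'': accumulating red- and blue-displacements along a spanning tree and arguing that cycles ``telescope because each cycle is balanced in each color'' does not work, because a NAC-coloring only forbids cycles with exactly one edge of some color; it does not make any signed count of colors around a cycle vanish. For instance, the cycle colored red, blue, red, blue is NAC, yet the tree path joining the endpoints of the non-tree edge crosses two red edges with the same sign, so the non-tree edge would not be parallel to a single direction and its length would vary. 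The construction that actually works (Lemma~\ref{lem:oneNac}) indexes each vertex by the pair $(i,j)$ of connected components of the red and of the blue subgraph containing it and sets $\rho_\alpha(v)=i\cdot(1,0)+j\cdot(\cos\alpha,\sin\alpha)$; a red edge then has $\Delta i=0$ by definition of the components, so nothing needs to telescope, and the NAC condition enters only to guarantee $\Delta j\neq 0$ (otherwise a blue path between the endpoints would close up with the red edge into an almost blue cycle), i.e.\ that every edge receives a positive length, while surjectivity yields at least two components of each color and hence non-congruent realizations.

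For ``flexible $\Rightarrow$ NAC'': your rule ``blue if $\nu(W_{u,v})>\nu(Z_{u,v})$, red if $<$'' leaves every edge with $\nu(W_e)=0$ uncolored, and such edges always exist — the fixed edge has $W_{\bar{u},\bar{v}}=\lambda_{\bar{u}\bar{v}}$, as does any edge whose direction is constant along $\C$ — so you end up verifying the NAC property for a partial coloring. You notice the problem but do not repair it. The repair is the one-sided threshold of Theorem~\ref{thm:valuationGivesNAC}: $\delta(uv)=\red{}$ iff $\nu(W_{u,v})>\alpha$ and $\delta(uv)=\blue{}$ iff $\nu(W_{u,v})\le\alpha$, which colors every edge; surjectivity is then arranged by observing that flexibility forces some $W_e$ to be transcendental over $\CC$ (otherwise all edge vectors, hence by connectivity all vertex positions, would be constant on $\C$) and applying Chevalley's theorem to obtain a valuation with $\nu(W_e)\neq 0$ while $\nu(W_{\bar{u},\bar{v}})=0$. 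With this convention your minimum-attained-twice argument is exactly right and needs no further care about uniqueness: in an almost red cycle the unique blue edge is the strict minimizer of $\nu$ in $\sum W=0$, and in an almost blue cycle the unique red edge is the strict minimizer in $\sum Z=0$, using $\nu(Z_e)=-\nu(W_e)$.
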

Actually, the following statement can be deduced from the proof of Theorem~\ref{thm:nacflexible}
with only minor modification --- replacing $0$ by $\alpha$.
This theorem explains how the functions~$W_{u,v}$ and $Z_{u,v}$ yield a NAC-coloring.
\begin{thm}\label{thm:valuationGivesNAC}
	Let $\lambda$ be a flexible labeling of a graph $G$. 
	Let $F(\C)$ be the complex function field of an algebraic motion $\C$ of $(G,\lambda)$.
	If $\alpha\in\QQ$ and $\nu$ is a valuation of $F(\C)$ 
	such that there exists edges $\bar{u}\bar{v},\widehat{u}\widehat{v}$ in $E_G$
	with $\nu(W_{\bar{u}\bar{v}})=\alpha$ and $\nu(W_{\widehat{u}\widehat{v}})>\alpha$, then $\delta: E_G\rightarrow\{\red{},\blue{}\}$ given by 
	\begin{equation}\label{eq:deltacol}
	  \begin{aligned}
			\delta(uv)=\red{} & \iff \nu(W_{u,v})>\alpha\,, \\
			\delta(uv)=\blue{} & \iff \nu(W_{u,v})\leq \alpha\,.
		\end{aligned}
	\end{equation}
	is a NAC-coloring.
\end{thm}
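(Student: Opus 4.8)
My plan is to verify the three requirements in the definition of a NAC-coloring: that $\delta$ is well defined on edges, that it is surjective, and that $G$ contains no almost \blue{} or almost \red{} cycle. Throughout I would use that $\nu$, being a valuation of the function field $F(\C)$ over $\CC$, vanishes on nonzero constants.

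First I would observe that $\delta$ is well defined, i.e.\ independent of the orientation of an edge $uv$. Since $W_{u,v}=-W_{v,u}$ and $\nu$ vanishes on the constant $-1$, we have $\nu(W_{u,v})=\nu(W_{v,u})$, so the comparison with $\alpha$ does not depend on the chosen order. Surjectivity is then immediate from the hypothesis: the edge $e$ has $\nu(W_e)=\alpha\leq\alpha$ and is therefore \blue{}, whereas $e'$ has $\nu(W_{e'})>\alpha$ and is \red{}; hence both colors occur.

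The core of the proof is the no-almost-cycle condition, for which I would use the two cycle relations $\sum_i W_{u_i,u_{i+1}}=0$ and $\sum_i Z_{u_i,u_{i+1}}=0$ holding along any cycle $(u_0,\dots,u_n,u_{n+1}=u_0)$, together with the principle recalled above: if a sum of functions is zero, then its minimal valuation is attained by at least two summands. Suppose first that $G$ has an almost \red{} cycle, i.e.\ a cycle with exactly one \blue{} edge. The \blue{} edge satisfies $\nu(W)\leq\alpha$, while each \red{} edge satisfies $\nu(W)>\alpha$; thus the \blue{} summand is the strict, hence unique, minimizer of the valuations $\nu(W_{u_i,u_{i+1}})$ along the cycle. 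This contradicts $\sum_i W_{u_i,u_{i+1}}=0$, so no almost \red{} cycle exists.

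I expect the almost \blue{} case to be the main obstacle, since the relation $\sum_i W_{u_i,u_{i+1}}=0$ alone does not rule it out: along such a cycle the single \red{} edge has large $W$-valuation, but the minimum of $\nu(W)$ may be attained by several of the \blue{} edges, so the vanishing $W$-sum is consistent. To get around this I would pass to the conjugate functions $Z$. Because $W_{u,v}Z_{u,v}=\lambda_{uv}^2$ is a nonzero constant and $\nu$ is trivial on constants, $\nu(Z_{u,v})=-\nu(W_{u,v})$ for every edge. Along an almost \blue{} cycle the unique \red{} edge then satisfies $\nu(Z)=-\nu(W)<-\alpha$, while every \blue{} edge satisfies $\nu(Z)=-\nu(W)\geq-\alpha$; so now the \red{} summand is the unique minimizer of $\nu(Z_{u_i,u_{i+1}})$. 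Applying the vanishing-sum principle to $\sum_i Z_{u_i,u_{i+1}}=0$ yields a contradiction, excluding almost \blue{} cycles. Together with surjectivity, this shows that $\delta$ is a NAC-coloring.
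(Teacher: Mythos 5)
Your argument is correct and is essentially the proof the paper alludes to (it defers to the proof of Theorem~\ref{thm:nacflexible} in the cited prior work, with $0$ replaced by $\alpha$): the cycle relations $\sum W_{u_i,u_{i+1}}=0$ and $\sum Z_{u_i,u_{i+1}}=0$, the identity $\nu(Z_{u,v})=-\nu(W_{u,v})$ coming from $W_{u,v}Z_{u,v}=\lambda_{uv}^2$, and the principle that a vanishing sum must attain its minimal valuation at least twice are exactly the ingredients the paper names. Your use of the $W$-relation against almost red cycles and the $Z$-relation against almost blue cycles is the intended decomposition, so there is nothing to add.
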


This motivates the assignment of some NAC-colorings to an algebraic motion.

\begin{defn}
  Let $\C$ be an algebraic motion of $(G,\lambda)$.
  A NAC-coloring $\delta\in\nac{G}$ is called \emph{active w.r.t.\ $\C$} if 
  there exists a valuation $\nu$ of $F(\C)$ and $\alpha\in\QQ$ such that
  \eqref{eq:deltacol} holds.
  The set of all active NAC-colorings of $G$ w.r.t.~$\C$ is denoted by $\nacC{G}{\C}$.
\end{defn}
For illustration, we compute the active NAC-colorings of the non-degenerated algebraic motion of a deltoid.

\begin{exmp}	\label{ex:deltoid}
	Let $Q$ be a 4-cycle with a labeling $\lambda$ given by $\lambda(\{1,2\})=\lambda(\{1,4\})=1$
	and $\lambda(\{2,3\})=\lambda(\{3,4\})=3$.
	There is an algebraic motion $\C$ of $(Q,\lambda)$ that can be parametrized by 
	\begin{align*}
		\rho_t(1) &= \left(0,0\right)\,,  & \rho_t(2)&= \left(1,0\right)\,, \\
		\rho_t(3)&= \left(\frac{4 {\left(t^{2} - 2\right)}}{t^{2} + 4},\frac{12  t}{t^{2} + 4}\right)\,, &
		\rho_t(4)&= \left(\frac{t^{4} - 13  t^{2} + 4}{t^{4} + 5  t^{2} + 4},\,
				\frac{6 t{\left(t^{2} - 2 \right)}}{t^{4} + 5  t^{2} + 4}\right)
	\end{align*}
	for $t\in\RR$. Now, we have
	\begin{equation*}
		W_{1, 2}=1 \,,\:
		W_{2, 3}=\frac{ 3 (t + 2 \ci) }{ t - 2 \ci }\,,\:
		W_{3, 4}=\frac{ -3(t + \ci) }{ t - \ci }\,,\:
		W_{4, 1}=\frac{ -(t + \ci)(t + 2 \ci) }{ (t - \ci)(t - 2 \ci) }\,.
	\end{equation*}
	Hence, the only non-trivial valuations correspond to the polynomials $t\pm \ci$ and~$t\pm 2\ci$.
	They give two pairs of conjugated NAC-colorings 
	by taking a suitable threshold $\alpha\in\{-1,0\}$, see Table~\ref{tab:deltoid} and Figure~\ref{fig:deltoidActiveNACs}.
	We remark that $|\nacC{Q}{\C}|=4$, whereas $|\nac{Q}|=6$. 
	The two non-active NAC-colorings correspond to the degenerated motion of $(Q,\lambda)$,
	where the vertices $2$ and $4$ coincide.
	
	\begin{table}[htb]
	\centering
		\begin{tabular}{c|c|cccc|cccc}
			edge & $\lambda$ & $\nu_{t+\ci}$ & $\delta_1$ & $\nu_{t-\ci}$ & $\conj{\delta_1}$ 
					& $\nu_{t+2\ci}$ & $\delta_2$ & $\nu_{t-2\ci}$ & $\conj{\delta_2}$ \\ \hline
			$\left\{1, 2\right\}$ & 1\rule{0em}{1.1em} & $0$ & \blue{} & $0$ & \red{}  & $0$ & \blue{} & $0$ & \red{}  \\
			$\left\{2, 3\right\}$ & 3 & $0$ & \blue{} & $0$ & \red{}  & $1$ & \red{}  & $-1$ & \blue{} \\
			$\left\{3, 4\right\}$ & 3 & $1$ & \red{}  & $-1$ & \blue{} & $0$ & \blue{} & $0$ & \red{}  \\
			$\left\{1, 4\right\}$ & 1 & $1$ & \red{}  & $-1$ & \blue{} & $1$ & \red{}  & $-1$ & \blue{} \\
		\end{tabular}
	\caption{Valuations giving the active NAC-colorings of a deltoid}
	\label{tab:deltoid}
	\end{table}
	\begin{figure}[htb]
		\centering
		\begin{tabular}{cccc}
			\begin{tikzpicture}[scale=0.75]
				\node[lnode] (1) at (0, 0) { 1 };
				\node[lnode] (2) at (1, 0) { 2 };
				\node[lnode] (3) at (292/97, 216/97) { 3 };
				\node[lnode] (4) at (2413/8245, 7884/8245) { 4 };
				\draw[bedge] (1)edge(2)  (2)edge(3);
				\draw[redge] (3)edge(4) (1)edge(4);
			\end{tikzpicture}
			&
			\begin{tikzpicture}[scale=0.75]
				\node[lnode] (1) at (0, 0) { 1 };
				\node[lnode] (2) at (1, 0) { 2 };
				\node[lnode] (3) at (292/97, 216/97) { 3 };
				\node[lnode] (4) at (2413/8245, 7884/8245) { 4 };
				\draw[bedge] (3)edge(4) (1)edge(4);
				\draw[redge] (1)edge(2)  (2)edge(3);
			\end{tikzpicture}
			&
			\begin{tikzpicture}[scale=0.75]
				\node[lnode] (1) at (0, 0) { 1 };
				\node[lnode] (2) at (1, 0) { 2 };
				\node[lnode] (3) at (292/97, 216/97) { 3 };
				\node[lnode] (4) at (2413/8245, 7884/8245) { 4 };
				\draw[bedge] (1)edge(2) (3)edge(4);
				\draw[redge] (2)edge(3) (1)edge(4);
			\end{tikzpicture}
			&
			\begin{tikzpicture}[scale=0.75]
				\node[lnode] (1) at (0, 0) { 1 };
				\node[lnode] (2) at (1, 0) { 2 };
				\node[lnode] (3) at (292/97, 216/97) { 3 };
				\node[lnode] (4) at (2413/8245, 7884/8245) { 4 };
				\draw[bedge] (2)edge(3)  (1)edge(4);
				\draw[redge] (1)edge(2) (3)edge(4);
			\end{tikzpicture}
			\\
			$\delta_1$ & $\conj{\delta_1}$ & $\delta_2$ & $\conj{\delta_2}$
		\end{tabular}
		\caption{Active NAC-colorings of a deltoid}
		\label{fig:deltoidActiveNACs}
	\end{figure}
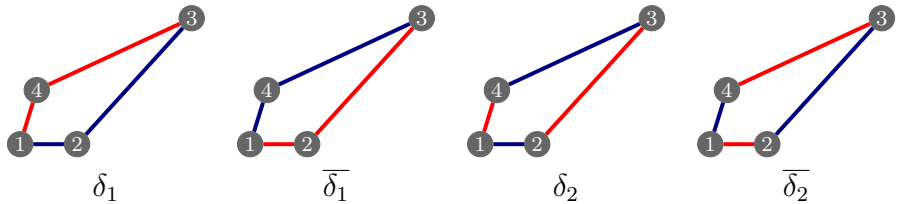
\end{exmp}

We conclude this section by three technical lemmas, 
which show that the active NAC-colorings do not depend on the choice of the fixed edge and 
that conjugated NAC-colorings are either both active or both non-active.
The first lemma says that the function field does not depend on the choice of the edge.
\begin{lem}\label{lem:motionFixingGivenEdge}
	Let $\lambda$ be a flexible labeling of $G$.
	Let $\C_{\bar{u},\bar{v}}$ be an algebraic motion of $(G,\lambda)$ w.r.t.\ an edge $\bar{u}\bar{v}$.
	If $u'v'\in E_G$ and $\varphi_{u'\!,v'}:\setreal{G}{\lambda}\rightarrow \setreal{G}{\lambda}$ is given by 
	\begin{align*}
		(x_w,y_w)_{w\in V_G} \mapsto  \bigg(&\frac{(x_w-x_{u'})(x_{v'}-x_{u'})+(y_w-y_{u'})(y_{v'}-y_{u'})}{\lambda(u'v')}, \\
												&\frac{(y_w-y_{u'})(x_{v'}-x_{u'})-(x_w-x_{u'})(y_{v'}-y_{u'})}{\lambda(u'v')}\bigg)_{w\in V_G}\,,
	\end{align*}
	then $\C_{u'\!,v'}=\varphi_{u'\!,v'}(\C_{\bar{u},\bar{v}})$ is an algebraic motion of $(G,\lambda)$ w.r.t.\ an edge $u'v'$
	and $\varphi_{u'\!,v'}:\C_{\bar{u},\bar{v}} \rightarrow \C_{u'\!,v'}$ is birational.
\end{lem}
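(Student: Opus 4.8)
The map $\varphi_{u',v'}$ is a re-coordinatization: it expresses every vertex position in the orthonormal frame whose origin is $\rho(u')$ and whose first basis vector points from $\rho(u')$ toward $\rho(v')$. I need to show three things: that $\varphi_{u',v'}$ sends compatible realizations to compatible realizations (so the image lands in $\setreal{G}{\lambda}$), that the image of $\C_{\bar u,\bar v}$ is an algebraic motion with respect to $u'v'$, and that the restricted map is birational.

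**Plan for the first step.** I would first check that $\varphi_{u',v'}$ preserves all pairwise squared distances, i.e.\ that it is (pointwise) a Euclidean isometry composed with a translation. Writing $c = (x_{v'}-x_{u'})/\lambda(u'v')$ and $s = (y_{v'}-y_{u'})/\lambda(u'v')$, the two output coordinates of a vertex $w$ are obtained from $(x_w-x_{u'},\,y_w-y_{u'})$ by the rotation matrix $\left(\begin{smallmatrix} c & s \\ -s & c\end{smallmatrix}\right)$, and $c^2+s^2 = 1$ holds \emph{as a function on $\C_{\bar u,\bar v}$} because $\norm{\rho(v')-\rho(u')} = \lambda(u'v')$ for every realization. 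Hence $\varphi_{u',v'}$ restricted to $\setreal{G}{\lambda}$ is a direct isometry varying algebraically with the point, so it preserves every $\norm{\rho(w)-\rho(w'')}$ and in particular all edge lengths; thus $\varphi_{u',v'}(\rho)$ is again compatible with $\lambda$. By direct substitution $\varphi_{u',v'}(\rho)$ sends $u'$ to $(0,0)$ and $v'$ to $(\lambda(u'v'),0)$, which is exactly the normalization required of an algebraic motion w.r.t.\ $u'v'$.

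**Plan for the irreducibility and birationality.** Since $\C_{\bar u,\bar v}$ is an irreducible algebraic curve and $\varphi_{u',v'}$ is given by rational functions (the denominators are the nonzero constant $\lambda(u'v')$, so $\varphi_{u',v'}$ is in fact a polynomial map on the Zariski closure of $\setreal{G}{\lambda}$), the image $\C_{u',v'}$ is an irreducible constructible set of dimension at most one; to see it is again a curve and that the map is birational, I would exhibit a rational inverse. The natural candidate is to apply the same normalization construction in reverse: on $\C_{u',v'}$ the edge $\bar u\bar v$ has some position, and re-coordinatizing $\C_{u',v'}$ in the frame of $\bar u\bar v$ recovers a realization congruent to the original. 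Concretely, I would write down $\varphi_{\bar u,\bar v}$ for the motion $\C_{u',v'}$ and verify that $\varphi_{\bar u,\bar v}\circ\varphi_{u',v'}$ is the identity on $\C_{\bar u,\bar v}$, using that both frames are orthonormal and that $\rho\in\C_{\bar u,\bar v}$ already has $\bar u,\bar v$ fixed in standard position. This gives a two-sided rational inverse on a dense open subset, hence birationality, and in particular forces $\C_{u',v'}$ to be an irreducible curve with $F(\C_{u',v'})\cong F(\C_{\bar u,\bar v})$.

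**Anticipated obstacle.** The routine parts are the distance-preservation identity and the substitution check; the delicate point is birationality on the locus where the construction degenerates. The formula for $\varphi_{u',v'}$ uses $x_{v'}-x_{u'}$ and $y_{v'}-y_{u'}$, which are well behaved since their squared norm is the constant $\lambda(u'v')^2 \neq 0$, so the rotation angle is always defined; the genuine worry is whether the inverse map is defined on a dense open set of $\C_{u',v'}$ rather than everywhere. Because a dominant morphism between irreducible curves is automatically generically finite, and here I produce an explicit rational left inverse, the map has degree one and is birational; I expect the main care to go into checking that composing the two re-coordinatizations returns the \emph{same} realization (not merely a congruent one) on $\C_{\bar u,\bar v}$, which holds precisely because $\bar u\bar v$ is held in standard position throughout $\C_{\bar u,\bar v}$.
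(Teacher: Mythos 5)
Your proposal is correct and follows the same route as the paper's (much terser) proof: the "direct computation" the paper invokes is exactly your observation that $\varphi_{u',v'}$ acts pointwise as the rotation $\left(\begin{smallmatrix} c & s \\ -s & c\end{smallmatrix}\right)$ with $c^2+s^2=1$ on $\setreal{G}{\lambda}$, hence preserves compatibility and normalizes $u'v'$, and the paper likewise names $\varphi_{\bar u,\bar v}$ as the rational inverse. Your extra care in checking that $\varphi_{\bar u,\bar v}\circ\varphi_{u',v'}$ is the identity (not merely a congruence) because $\bar u\bar v$ is already in standard position is exactly the right justification for that last claim.
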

\begin{proof}
	By direct computation, one can check that $u'v'$ is indeed fixed in~$\C_{u'\!,v'}$  and that all realizations in $\C_{u'\!,v'}$ are compatible with $\lambda$.
	The rational inverse of $\varphi_{u'\!,v'}$ is $\varphi_{\bar{u},\bar{v}}$.
\end{proof}

The following lemma shows that active NAC-colorings are independent of the choice of the fixed edge.
\begin{lem}\label{lem:activeNACindependentOnFixedEdge}
	Let $G$ be a graph with a flexible labeling $\lambda$.
	If $\C_{u'\!,v'}$ and $\C_{\bar{u},\bar{v}}$ are as in Lemma~\ref{lem:motionFixingGivenEdge}, 
	then $\nacC{G}{\C_{u'\!,v'}}=\nacC{G}{\C_{\bar{u},\bar{v}}}$.
\end{lem}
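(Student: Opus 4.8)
The plan is to exploit that $\varphi_{u',v'}$ is birational, so that the pullback $\varphi_{u',v'}^{*}$ is an isomorphism $F(\C_{u',v'})\xrightarrow{\sim}F(\C_{\bar{u},\bar{v}})$ and therefore puts the valuations of the two fields in bijection; the only thing one must track is how the functions $W^{u'v'}_{a,b}$ transform. First I would compute this pullback explicitly. Writing $p_w=x_w+\ci y_w$, the two coordinates defining $\varphi_{u',v'}$ combine into
\begin{equation*}
	\varphi_{u',v'}^{*}(p_w)=\frac{(p_w-p_{u'})\,Z^{\bar{u}\bar{v}}_{u',v'}}{\lambda(u'v')}\,,
\end{equation*}
since $(x_{v'}-x_{u'})-\ci(y_{v'}-y_{u'})=Z^{\bar{u}\bar{v}}_{u',v'}$. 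Subtracting over an edge $ab$ cancels the terms involving $u'$ and yields the clean identity
\begin{equation*}
	\varphi_{u',v'}^{*}\bigl(W^{u'v'}_{a,b}\bigr)=\frac{Z^{\bar{u}\bar{v}}_{u',v'}}{\lambda(u'v')}\,W^{\bar{u}\bar{v}}_{a,b}\,.
\end{equation*}
The decisive feature is that the multiplier $\mu:=Z^{\bar{u}\bar{v}}_{u',v'}/\lambda(u'v')$ is independent of the edge $ab$, and it is a nonzero element of $F(\C_{\bar{u},\bar{v}})$ because $W^{\bar{u}\bar{v}}_{u',v'}Z^{\bar{u}\bar{v}}_{u',v'}=\lambda(u'v')^{2}\neq 0$.

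With this identity the remainder is bookkeeping on valuations. Suppose $\delta\in\nacC{G}{\C_{\bar{u},\bar{v}}}$, witnessed by a valuation $\nu$ of $F(\C_{\bar{u},\bar{v}})$ and a threshold $\alpha$ via~\eqref{eq:deltacol}. I would set $\nu':=\nu\circ\varphi_{u',v'}^{*}$, which is a valuation of $F(\C_{u',v'})$, and compute for every edge $ab$
\begin{equation*}
	\nu'\bigl(W^{u'v'}_{a,b}\bigr)=\nu(\mu)+\nu\bigl(W^{\bar{u}\bar{v}}_{a,b}\bigr)\,.
\end{equation*}
Since $\lambda(u'v')$ is a nonzero constant, $\nu(\mu)=\nu(Z^{\bar{u}\bar{v}}_{u',v'})$ is an integer (the valuations of the function field of a curve being discrete), hence rational. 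Adding this fixed shift to the threshold, the comparison $\nu'(W^{u'v'}_{a,b})>\alpha+\nu(\mu)$ holds precisely when $\nu(W^{\bar{u}\bar{v}}_{a,b})>\alpha$, so the very same coloring $\delta$ satisfies~\eqref{eq:deltacol} with respect to the pair $(\nu',\alpha+\nu(\mu))$. Thus $\delta\in\nacC{G}{\C_{u',v'}}$, which gives one inclusion.

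For the reverse inclusion I would invoke the symmetry already packaged in Lemma~\ref{lem:motionFixingGivenEdge}: its rational inverse $\varphi_{\bar{u},\bar{v}}$ plays the role of $\varphi_{u',v'}$ with the two fixed edges interchanged, so the identical computation yields $\nacC{G}{\C_{u',v'}}\subseteq\nacC{G}{\C_{\bar{u},\bar{v}}}$, and the two sets coincide.

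The main obstacle is the pullback computation in the first paragraph: one must recognize that, on the level of the complex coordinate $p_w$, the map $\varphi_{u',v'}$ is nothing but multiplication by the single function $Z^{\bar{u}\bar{v}}_{u',v'}/\lambda(u'v')$, so that it rescales all of the $W^{\bar{u}\bar{v}}_{a,b}$ by one common, edge-independent, nonzero factor. Once that factor is identified, activeness is automatically preserved, because a global multiplicative factor merely shifts the valuation threshold $\alpha$ by the rational number $\nu(\mu)$ while leaving the induced coloring $\delta$ untouched.
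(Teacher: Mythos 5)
Your proposal is correct and follows essentially the same route as the paper: pull back along the birational map $\varphi_{u'\!,v'}$, observe that $W^{u'\!v'}_{a,b}\circ\varphi_{u'\!,v'}$ equals $W^{\bar{u}\bar{v}}_{a,b}$ times the edge-independent factor $Z^{\bar{u}\bar{v}}_{u'\!,v'}/\lambda(u'v')$, and shift the threshold $\alpha$ by the valuation of that factor. Your explicit remarks that the multiplier is nonzero, that the shift is rational, and that the reverse inclusion follows by symmetry are details the paper leaves implicit, but the argument is the same.
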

\begin{proof}
	Let $\delta \in \nacC{G}{\C_{\bar{u},\bar{v}}}$, i.e., there exists a valuation $\bar{\nu}$ of $F(\C_{\bar{u},\bar{v}})$ and $\alpha\in\QQ$ such that
	$\delta(uv)=\red{} \iff \bar{\nu}(W^{\bar{u}\bar{v}}_{u,v})>\alpha$ for all $uv\in E_G$.	
	Let $\varphi_{u'\!,v'}:\C_{\bar{u},\bar{v}} \rightarrow \C_{u'\!,v'}$ be the birational map from Lemma~\ref{lem:motionFixingGivenEdge}.
	Hence, there is a function field isomorphism $\phi:F(\C_{u'\!,v'})\rightarrow F(\C_{\bar{u},\bar{v}})$ given by $f\mapsto f\circ \varphi_{u'\!,v'}$.
	We define a valuation $\nu'$ of $F(\C_{u'\!,v'})$ by $\nu'(f):=\bar{\nu}(\phi(f))$.
	If $W^{u'\!v'}_{u,v}\in F(\C_{u'\!,v'})$, then
	\begin{align*}
		W^{u'\!v'}_{u,v} \circ \varphi_{u'\!,v'} &= 
		 \Big((x_v-x_u)(x_{v'}-x_{u'})+(y_v-y_u)(y_{v'}-y_{u'}) \Big)/\lambda(u'v')\\
				& \quad 	+ \ci \Big((y_v-y_u)(x_{v'}-x_{u'})-(x_v-x_u)(y_{v'}-y_{u'})\Big)/\lambda(u'v') 	\\
			&=\Big((x_v-x_u)+\ci(y_v-y_u) \Big)\cdot\Big((x_{v'}-x_{u'})-\ci (y_{v'}-y_{u'})\Big)/\lambda(u'v')\,.
	\end{align*}
	Therefore, $\nu'(W^{u'\!v'}_{u,v}) = \bar{\nu}(W^{\bar{u}\bar{v}}_{u,v}) + \bar{\nu}(Z^{\bar{u}\bar{v}}_{u'\!,v'})$.
	This concludes the proof, since  
	\begin{equation*}
		\delta(uv)=\red{} \iff \bar{\nu}(W^{\bar{u}\bar{v}}_{u,v})>\alpha \iff \nu'(W^{u'\!v'}_{u,v})> \alpha+ \bar{\nu}(Z^{\bar{u}\bar{v}}_{u'\!,v'})\,.
	\end{equation*}		
\end{proof}
Finally, we show that the set of active NAC-colorings is closed under conjugation.
\begin{lem}	\label{lem:conjugatedActiveNACs}
	Let $\lambda$ be a flexible labeling of a graph $G$. 
	Let $\C$ be an algebraic motion of~$(G,\lambda)$.
	If $\delta, \conj{\delta} \in \nac{G}$ are conjugated,
	then $\delta \in \nacC{G}{\C}$ if and only if $\conj{\delta} \in \nacC{G}{\C}$.
\end{lem}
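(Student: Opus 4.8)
The plan is to produce, from a valuation witnessing that $\delta$ is active, a valuation witnessing that $\conj{\delta}$ is active; by symmetry this yields both implications at once. The essential input is the relation $W_{u,v}Z_{u,v}=\lambda_{uv}^2$, which lies in $\CC^*$, so any valuation $\nu$ of $F(\C)$ is trivial on it and hence $\nu(Z_{u,v})=-\nu(W_{u,v})$ for every edge $uv$. The second ingredient is complex conjugation: since the algebraic motion $\C$ is cut out by polynomials with real coefficients (the coordinates are real and the edge constraints involve only $\lambda_{uv}^2\in\RR$), the assignment $\ci\mapsto-\ci$ fixing each coordinate function $x_w,y_w$ extends to a field automorphism $\sigma$ of $F(\C)$. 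Directly from the definitions of $W_{u,v}$ and $Z_{u,v}$ one gets $\sigma(W_{u,v})=Z_{u,v}$.

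First I would fix $\delta\in\nacC{G}{\C}$, so there are a valuation $\nu$ and $\alpha\in\QQ$ with $\delta(uv)=\red{}\iff\nu(W_{u,v})>\alpha$. I would then set $\nu':=\nu\circ\sigma$. Because $\sigma$ is a field automorphism, $\nu'$ is again a valuation of $F(\C)$: it is additive on products, satisfies $\nu'(f+g)\ge\min(\nu'(f),\nu'(g))$, and is trivial on $\CC^*$. Combining the two ingredients gives $\nu'(W_{u,v})=\nu(\sigma(W_{u,v}))=\nu(Z_{u,v})=-\nu(W_{u,v})$ for every edge.

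Next I would choose the threshold. Since $\delta$ is surjective, both color classes are nonempty, and the defining property of $\alpha$ forces $m:=\max\{\nu(W_e):\delta(e)=\blue{}\}$ and $M:=\min\{\nu(W_e):\delta(e)=\red{}\}$ to satisfy $m\le\alpha<M$, hence $m<M$. Put $\alpha':=-M$, which is rational. Using $\nu'(W_e)=-\nu(W_e)$ one checks directly that $\nu'(W_e)>\alpha'$ holds exactly for the blue edges of $\delta$, i.e.\ the red edges of $\conj{\delta}$, while $\nu'(W_e)\le\alpha'$ holds exactly for the red edges of $\delta$, i.e.\ the blue edges of $\conj{\delta}$. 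Thus \eqref{eq:deltacol} holds for $\conj{\delta}$ with the pair $(\nu',\alpha')$, proving $\conj{\delta}\in\nacC{G}{\C}$; the reverse implication follows by applying the same argument to $\conj{\delta}$, whose conjugate is $\delta$.

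The step I expect to require the most care is the boundary of the threshold: because \eqref{eq:deltacol} places the value $\alpha$ on the blue side ($\le\alpha$), a naive choice such as $\alpha'=-\alpha$ would misclassify any blue edge attaining $\nu(W_e)=\alpha$, which is why the threshold must be shifted to $-M$ (equivalently, the negative of any rational in $(m,M\,]$). A secondary point to justify carefully is that $\sigma$ is genuinely a field automorphism of $F(\C)$, i.e.\ that the complexified curve $\C$ is defined over $\RR$ so that conjugation of coefficients preserves its ideal; granting this, $\sigma(W_{u,v})=Z_{u,v}$ is immediate.
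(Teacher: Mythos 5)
Your proposal is correct and follows essentially the same route as the paper: the induced valuation $\nu\circ\sigma$ is exactly the paper's $\conj{\nu}$ (defined via complex conjugation on the real curve $\C$), the identity $\nu(Z_e)=-\nu(W_e)$ plays the same role, and your threshold $-M$ coincides with the paper's $\beta=\max\{\nu(Z_e)\colon \nu(Z_e)<-\alpha\}$. Your explicit attention to the boundary case (why $-\alpha$ itself would fail as a threshold) and to surjectivity guaranteeing nonemptiness of both color classes is a welcome refinement of details the paper leaves implicit.
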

\begin{proof}
	Let $\delta$ be an active NAC-coloring of $G$ w.r.t.\ $\C$ given by a valuation~$\nu$ of $F(\C)$ and a threshold $\alpha$.
	Since the algebraic motion $\C$ is a real algebraic curve, it has complex conjugation defined on its complex points.
	This induces another valuation $\conj{\nu}$ of $F(\C)$ given by
	$\conj{\nu}(f):= \nu(\conj{f})$ for any $f\in F(\C)$,
	where $\conj{f}$ is given by $\conj{f}(\rho):=\conj{f(\overline{\rho})}$ for every~$\rho\in\C$.
	If $\beta=\max\{\nu(Z_{u,v}) \colon \nu(Z_{u,v})< -\alpha,  uv \in E_G\}$, 
	then $\conj{\nu}$ and $\beta$ satisfy \eqref{eq:deltacol} for $\conj{\delta}$, since for every edge $uv\in E_G$:
	\begin{align*}
		\delta(uv)=\red{} &\iff \alpha < \nu(W_{u,v}) \iff -\alpha >\nu(Z_{u,v})\\
			&\iff \beta \geq\nu(Z_{u,v}) \iff \beta \geq \conj{\nu}(W_{u,v}) \iff \conj{\delta}(uv)=\blue{}\,.
	\end{align*}
	Namely, $\conj{\delta}$ is in $\nacC{G}{\C}$.
\end{proof}

\section{Combinatorial tools}	\label{sec:combinatorialCondition}
From now on, we are interested only in proper flexible labelings,
namely, the question, whether a graph is movable.
One of our main tools is introduced in this section: 
An edge $uv$ can be added to a graph without changing its algebraic motion,
if the vertices~$u$ and $v$ are connected by a path that is monochromatic in every active NAC-coloring.
This leads to the notion of constant distance closure ---
augmenting the graph by edges with the property above,
taking into account all NAC-colorings of the graph instead of active ones.
Hence, we obtain a necessary combinatorial condition on movability:
a graph can be movable only if its constant distance closure has a NAC-coloring.
Based on this necessary condition, we show that so called tree-decomposable graphs are not movable.
At the end of the section, we list all maximal constant distance closures 
of graphs up to~8 vertices having a spanning Laman graph
that satisfy the necessary condition.

The following statement guarantees that adding an edge $uv$ 
with the mentioned property preserves an algebraic motion,
since the distance between $u$ and $v$ is constant during the motion.
\begin{lem}
	\label{lem:constantDistance}
	Let $G$ be a graph, $\lambda$ a flexible labeling of $G$ and $u,v\in V_G$ where $uv\not\in E_G$.
	Let $\C$ be an algebraic motion of $(G,\lambda)$
	such that $\forall \rho \in \C: \rho(u)\neq\rho(v)$.
	If there exists a $uv$-path $P$ in $G$ such that $P$ is monochromatic for all $\delta \in \nacC{G}{\C}$,
	then $\lambda$ has a unique extension $\lambda'$ of $G'=(V_G, E_G\cup\{uv\})$,
	such that $\C$ is an algebraic motion of $(G',\lambda')$ and
	$\nacC{G'}{\C}=\{\delta'\in\nac{G'} \colon \delta'|_{E_G}\in\nacC{G}{\C}\}$.
\end{lem}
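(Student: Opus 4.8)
The plan is to exploit the fact that a unicolor $uv$-path in every active NAC-coloring forces the functions $W_{u,v}$ and $Z_{u,v}$ to have well-controlled valuations, so that the product $W_{u,v}Z_{u,v}$ is a nonzero constant in $F(\C)$, which is exactly the statement that the squared distance between $u$ and $v$ is constant along the motion. First I would fix an edge $\bar u\bar v$ and work in the function field $F(\C)$, writing the $uv$-path as $P=(u=w_0,w_1,\dots,w_k=v)$. Using the telescoping identities $W_{u,v}=\sum_{i=0}^{k-1}W_{w_i,w_{i+1}}$ and $Z_{u,v}=\sum_{i=0}^{k-1}Z_{w_i,w_{i+1}}$, I would argue that for every valuation $\nu$ of $F(\C)$ the value $\nu(W_{u,v})$ equals the common minimal value $\nu(W_{w_i,w_{i+1}})$ along the path: the key point is that if two edges of $P$ had different $W$-valuations, then the threshold argument of Theorem~\ref{thm:valuationGivesNAC} (applied with $\alpha$ between those two values) would produce an active NAC-coloring in which $P$ is not unicolor, contradicting the hypothesis. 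Hence $\nu(W_{u,v})=\nu(W_e)$ and, by the conjugate statement via Lemma~\ref{lem:conjugatedActiveNACs}, $\nu(Z_{u,v})=\nu(Z_e)$ for any edge $e$ of $P$, so $\nu(W_{u,v}Z_{u,v})=\nu(W_eZ_e)=\nu(\lambda_e^2)=0$.

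Since $W_{u,v}Z_{u,v}=(x_v-x_u)^2+(y_v-y_u)^2$ is a nonzero element of $F(\C)$ (nonzero because $\rho(u)\neq\rho(v)$ on $\C$) whose valuation is $0$ at every valuation of $F(\C)$, it must be a constant: a function on an irreducible curve with no zeros and no poles is constant. I would record this constant as a positive real number (it is a squared distance evaluated at real points of $\C$) and set $\lambda'(uv)$ to its square root, with $\lambda'|_{E_G}=\lambda$. By construction every $\rho\in\C$ then satisfies $\norm{\rho(u)-\rho(v)}=\lambda'(uv)$, so $\C\subseteq\setreal{G'}{\lambda'}$ and $\C$ is an algebraic motion of $(G',\lambda')$; uniqueness of $\lambda'$ is immediate since the constant distance is forced by $\C$.

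It remains to identify the active NAC-colorings. The inclusion $\nacC{G'}{\C}\subseteq\{\delta'\colon\delta'|_{E_G}\in\nacC{G}{\C}\}$ follows because any valuation and threshold realizing $\delta'\in\nacC{G'}{\C}$ restricts to a valuation-threshold pair realizing $\delta'|_{E_G}$ as an active NAC-coloring of $G$ (one only needs that $G$ still has two edges of distinct $W$-valuation, which holds as $\delta'|_{E_G}$ is surjective). For the reverse inclusion, given $\delta\in\nacC{G}{\C}$ with witnessing valuation $\nu$ and threshold $\alpha$, I would extend $\delta$ to $\delta'$ on $E_{G'}$ by coloring $uv$ according to \eqref{eq:deltacol}; the computation above gives $\nu(W_{u,v})=\nu(W_e)$ for $e\in P$, so $\delta'(uv)$ agrees with the (constant) color of $P$, and one checks directly that $\delta'$ is a NAC-coloring of $G'$: any new cycle through $uv$ can be rerouted along $P$, which is unicolor of the same color as $uv$, reducing the almost-cycle condition to cycles already controlled in $G$. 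The main obstacle I anticipate is this last verification that adding the edge $uv$ with the forced color cannot create an almost \blue{} or almost \red{} cycle in $G'$; handling it cleanly requires the rerouting argument together with the fact, established above, that $uv$ inherits exactly the unicolor value of $P$.
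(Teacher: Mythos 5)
Your first half is correct and is a clean function-field variant of the paper's argument. The paper instead uses Lemma~\ref{lem:activeNACindependentOnFixedEdge} to fix the first edge $e_1$ of $P$ as the fixed edge of $\C$, so that $\nu(W_{e_1})=0$ for every valuation, and then applies Chevalley's theorem together with Theorem~\ref{thm:valuationGivesNAC} at threshold $\alpha=0$ to conclude that $W_{e}$ is constant for every edge $e$ of $P$; from this it deduces that $\norm{\rho(u)-\rho(v)}$ takes only finitely many values on $\C$ and invokes irreducibility of $\C$ to get a single value. Your route --- equal $W$-valuations along $P$ for every valuation, hence $W_{u,v}Z_{u,v}$ has no poles and is a nonzero constant --- is equally valid and arguably tidier. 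One imprecision: from equal valuations of the summands you may only conclude $\nu(W_{u,v})\ge\nu(W_e)$ (cancellation can raise the valuation of a sum), and likewise $\nu(Z_{u,v})\ge\nu(Z_e)=-\nu(W_e)$; this still gives $\nu(W_{u,v}Z_{u,v})\ge 0$ at every place, hence constancy, and only \emph{then} do both inequalities become equalities. You should argue in that order, because the exact equality $\nu(W_{u,v})=\nu(W_e)$ is what you need in the second half.

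The genuine gap is in the reverse inclusion for the active NAC-colorings. Your rerouting argument handles a cycle $C'=uv+Q$ of $G'$ whose unique minority-colour edge lies in $Q$, but not the case where that edge is $uv$ itself: if $P$ (hence $uv$) is blue and $Q$ is an all-red $uv$-path in $G$, rerouting replaces $C'$ by the closed walk $P\cup Q$, which has at least two edges of each colour and so yields no contradiction with $\delta|_{E_G}\in\nac{G}$ --- yet $C'$ is an almost red cycle of $G'$. A NAC-coloring of $G$ can perfectly well contain both a blue and a red $uv$-path, so this case must be excluded, and only the valuations can do it: telescoping $W_{u,v}$ along $Q$ gives $\nu(W_{u,v})\ge\min_{e\in Q}\nu(W_e)>\alpha$, contradicting $\nu(W_{u,v})=\nu(W_e)\le\alpha$ for $e\in P$. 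Alternatively, once the first half shows that $\C$ is an algebraic motion of $(G',\lambda')$, you can apply Theorem~\ref{thm:valuationGivesNAC} directly to $G'$ (after lowering the threshold to the largest value $\nu(W_e)$, $e\in E_{G'}$, not exceeding $\alpha$) and obtain that the extension defined by \eqref{eq:deltacol} is an active NAC-coloring of $G'$ with no combinatorial case analysis; note also that for the stated set equality it suffices to show that a \emph{given} $\delta'\in\nac{G'}$ with active restriction is itself active, for which the colour of $uv$ forced by the cycle $uv+P$ together with $\nu(W_{u,v})=\nu(W_e)$ is already enough.
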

\begin{proof}
	Let $S=\{\norm{\rho(u)-\rho(v)}\in\RR_+\colon \rho \in \C\}$.
	We first show that $S$ is finite.
	By Lemma~\ref{lem:activeNACindependentOnFixedEdge},
	we can assume that the first edge $u_1u_2$ of the path $P$ is the fixed one in $\C$.
	If there is any $u_ku_{k+1}$ in $P$ such that $W_{u_k,u_{k+1}}$ is transcendental,
	then there is a valuation~$\nu$ such that $\nu(W_{u_k,u_{k+1}})>0$ by Chevalley's Theorem (see \cite{Deuring}).
	Hence, an active NAC-coloring can be constructed by Theorem~\ref{thm:valuationGivesNAC} with $\nu(W_{u_1,u_2})=0$,
	which contradicts that $P$ is monochromatic.
	Therefore, $W_{u_k,u_{k+1}}$ is algebraic for all $u_ku_{k+1}$ in $P$.
	Then there are only finitely many values for~$W_{u_k,u_{k+1}}$.
	These values correspond to possible angles of the line given by the realization of the vertices $u_k$ and $u_{k+1}$.
	Hence, there can only be finitely many elements in $S$.
	
	Indeed, we can show that $|S|\, =1$.
	Assume $S=\{s_1,\dots, s_\ell\}$, then $\C=\bigcup_{i\in\{1,\dots,\ell\}} \{\rho\in \C\colon \norm{\rho(u)-\rho(v)}^2=s_i^2\}$.
	Since $\C$ is irreducible, then $\ell=1$.
	We define $\lambda'$ by $\lambda'|_{E_G}=\lambda$ and $\lambda'(uv)=s_1$.
	
	The restriction of any active NAC-coloring $\delta'\!\in\nacC{G'}{\C}$ to $E_G$ is clearly in $\nacC{G}{\C}$.
	On the other hand, every active NAC-coloring of $G$ 
	is extended uniquely to an active NAC-coloring of $G'$, since the path~$P$ is monochromatic.
\end{proof}
Notice that it is sufficient to check the assumption only for non-conjugated active NAC-colorings 
due to Lemma~\ref{lem:conjugatedActiveNACs}.

Removal of an edge also preserves movability,
since edge lengths are assumed to be positive.
Together with the fact that $\nacC{G}{\C}\subset\nac{G}$ for any algebraic motion~$\C$,
this gives the following corollary.
\begin{cor}
	\label{cor:constantDistance}
	Let $G$ be a graph and $u,v\in V_G$ be such that $uv\notin E_G$.
	If there exists a $uv$-path $P$ in $G$ such that $P$ is monochromatic for all $\delta \in \nac{G}$,
	then $G$ is movable if and only if $G'=(V_G, E_G\cup\{uv\})$ is movable.
\end{cor}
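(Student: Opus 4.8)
The plan is to derive Corollary~\ref{cor:constantDistance} from Lemma~\ref{lem:constantDistance} by establishing the two building blocks mentioned in the preamble: that adding an edge whose endpoints are joined by a path that is unicolor in \emph{every} NAC-coloring preserves movability, and that removing an edge preserves movability. Since movability is defined via the existence of \emph{some} proper flexible labeling, and being movable only requires \emph{one} suitable algebraic motion, I would be careful to prove each direction by exhibiting the witnessing labeling and motion explicitly.

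First I would prove the forward direction: if $G$ is movable, then so is $G'=(V_G,E_G\cup\{uv\})$. Assume $\lambda$ is a proper flexible labeling of $G$, so there is an algebraic motion $\C$ of $(G,\lambda)$ carrying infinitely many injective realizations; in particular $\rho(u)\neq\rho(v)$ for all but finitely many $\rho\in\C$, and since $\C$ is irreducible we may assume $\rho(u)\neq\rho(v)$ throughout (a proper algebraic motion cannot be contained in the proper subvariety where two vertices coincide). The key observation is that the hypothesis gives a $uv$-path $P$ unicolor for \emph{all} $\delta\in\nac{G}$, hence a fortiori unicolor for all $\delta\in\nacC{G}{\C}\subseteq\nac{G}$. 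This is exactly the hypothesis of Lemma~\ref{lem:constantDistance}, which then supplies a unique extension $\lambda'$ of $\lambda$ to $G'$ such that $\C$ is still an algebraic motion of $(G',\lambda')$. Since $\C$ is unchanged as a set of realizations, it still carries infinitely many injective realizations, so $\lambda'$ is a proper flexible labeling of $G'$ and $G'$ is movable.

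For the converse direction I would use edge removal: if $G'$ is movable, then $G$ is movable. Here the argument is simpler and does not even need the unicolor hypothesis. Let $\lambda'$ be a proper flexible labeling of $G'$ with an algebraic motion $\C'$ having infinitely many injective realizations. Restrict $\lambda'$ to $\lambda:=\lambda'|_{E_G}$. Every realization of $G'$ compatible with $\lambda'$ is automatically a realization of $G$ compatible with $\lambda$, since $E_G\subseteq E_{G'}$ and we have simply dropped one distance constraint; conversely the injectivity inequalities are the same for $G$ and $G'$ because injectivity concerns all pairs of vertices regardless of adjacency. Thus $\C'\subseteq\setreal{G}{\lambda}$ is an irreducible algebraic curve of realizations compatible with $\lambda$, i.e.\ an algebraic motion of $(G,\lambda)$, and it carries the same infinitely many injective realizations. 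Here I would note that $\lambda$ being positive on $E_G$ is what guarantees $\lambda$ is still an honest edge labeling into $\RR_+$. Hence $\lambda$ is a proper flexible labeling of $G$ and $G$ is movable.

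The main obstacle, and the point deserving the most care, is the transition in the forward direction from ``all NAC-colorings'' to ``all active NAC-colorings'' and the verification that the motion $\C$ can be taken to avoid the diagonal $\rho(u)=\rho(v)$ entirely. The inclusion $\nacC{G}{\C}\subseteq\nac{G}$ makes the color hypothesis transfer for free, so the genuinely delicate step is ensuring the hypothesis $\forall\rho\in\C:\rho(u)\neq\rho(v)$ of Lemma~\ref{lem:constantDistance} actually holds for a motion witnessing properness; this is where irreducibility of $\C$ is essential, since otherwise the injective realizations and the coincident ones might lie on different components. Everything else reduces to unwinding the definitions of \emph{proper flexible labeling} and \emph{movable}, which I would keep terse.
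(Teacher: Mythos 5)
Your proposal is correct and follows essentially the same route as the paper: the direction from $G'$ to $G$ is the trivial restriction of a proper flexible labeling, and the direction from $G$ to $G'$ is an application of Lemma~\ref{lem:constantDistance}, using $\nacC{G}{\C}\subseteq\nac{G}$ to transfer the unicolor hypothesis. You are in fact slightly more explicit than the paper about verifying the hypothesis $\forall\rho\in\C:\rho(u)\neq\rho(v)$ (irreducibility only directly gives this off a finite set, with the full claim then following from the constancy of $\norm{\rho(u)-\rho(v)}$ established inside the lemma), a point the paper's proof passes over silently.
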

\begin{proof}
  Let $\lambda'$ be a proper flexible labeling of $G'$.
  Clearly, $\lambda=\lambda'|_{E_G}$ is a flexible labeling of $G$. 
	A realization $\rho$ of $G'$ compatible with $\lambda'$ maps $u$ and $v$ to distinct points,
	since $\norm{\rho(u)-\rho(v)}=\lambda'(uv)\neq 0$.
	Clearly, $\rho$ is also a realization of $G$ and it is compatible with $\lambda$.
	The other direction follows from Lemma~\ref{lem:constantDistance}.
\end{proof}

Let us point out that there is no specific algebraic motion assumed in the previous corollary.
Hence, it can be used for proving that a graph is not movable in purely combinatorial way.
This is demonstrated by the following example.
\begin{exmp}	\label{ex:notMovable}
	The graph $G$ in Figure~\ref{fig:noProper7vertexGraph} is not movable:
	since the vertices 1 and 4 are connected by the path $(1,3,4)$ which is monochromatic in every NAC-coloring, and similarly for~2 and 5 with the path $(2,3,5)$,
	$G$ is movable if and only if $G'=(V_G,E_G\cup\{\{1,4\},\{2,5\}\})$ is movable.
	But $G'$ has no flexible labeling by Theorem~\ref{thm:nacflexible}, since it has no NAC-coloring.
	\begin{figure}[htb]
		\begin{center}
		    \begin{tikzpicture}[scale=0.65]
		    	\begin{scope}
				      \node[lnode] (1) at (-1,-1) {1};
				      \node[lnode] (2) at (1,-1) {2};
				      \node[lnode] (3) at (0,0) {3};
				      \node[lnode] (4) at (-1,1) {4};
				      \node[lnode] (5) at (1,1) {5};
				      \node[lnode] (6) at (-2.5,2) {6};
				      \node[lnode] (7) at (2.5,2) {7};
				      \draw[bedge] (1)edge(2) (1)edge(3) (2)edge(3)  (3)edge(5) (3)edge(4) (4)edge(5)  (5)edge(7) (2)edge(7);
				      \draw[redge] (1)edge(6) (4)edge(6) (6)edge(7);
		      	\end{scope}
		      	\begin{scope}[xshift=6.5cm]
				      \node[lnode] (1) at (-1,-1) {1};
				      \node[lnode] (2) at (1,-1) {2};
				      \node[lnode] (3) at (0,0) {3};
				      \node[lnode] (4) at (-1,1) {4};
				      \node[lnode] (5) at (1,1) {5};
				      \node[lnode] (6) at (-2.5,2) {6};
				      \node[lnode] (7) at (2.5,2) {7};
				      \draw[bedge] (1)edge(6) (4)edge(6) (1)edge(2) (1)edge(3) (2)edge(3)  (3)edge(5) (3)edge(4) (4)edge(5) ;
				      \draw[redge] (6)edge(7) (5)edge(7) (2)edge(7);
		      	\end{scope}
		      	\begin{scope}[xshift=13cm]
				      \node[lnode] (1) at (-1,-1) {1};
				      \node[lnode] (2) at (1,-1) {2};
				      \node[lnode] (3) at (0,0) {3};
				      \node[lnode] (4) at (-1,1) {4};
				      \node[lnode] (5) at (1,1) {5};
				      \node[lnode] (6) at (-2.5,2) {6};
				      \node[lnode] (7) at (2.5,2) {7};
				      \draw[bedge] (6)edge(7)  (1)edge(2) (1)edge(3) (2)edge(3)  (3)edge(5) (3)edge(4) (4)edge(5) ;
				      \draw[redge] (1)edge(6) (4)edge(6) (5)edge(7) (2)edge(7);
		      	\end{scope}
		    \end{tikzpicture}
		\caption{All non-conjugated NAC-colorings of a Laman graph with no proper flexible labeling}
		\label{fig:noProper7vertexGraph}
		\end{center}
	\end{figure}
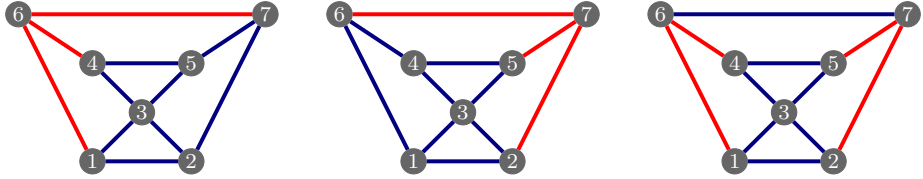
\end{exmp}

The corollary and example motivate the next definition. 
The name is inspired by the constant distance between vertices $u$ and $v$
 in Lemma~\ref{lem:constantDistance} during the motion.

\begin{defn}
	Let $G$ be a graph. 
	Let $\upairs{G}$ denote the set of all pairs $\{u,v\}\subset V_G$ such that $uv\notin E_G$ and 
	there exists a path from $u$ to $v$ which is monochromatic for all $\delta \in \nac{G}$.	
	If there exists a sequence of graphs $G_0, \dots, G_n$ such that
	\begin{enumerate}
		\item $G=G_0$,
		\item $G_i=(V_{G_{i-1}},E_{G_{i-1}} \cup \upairs{G_{i-1}})$ for $i\in\{1,\dots,n\}$,
		\item $\upairs{G_n}=\emptyset$,
	\end{enumerate}
	then the graph $G_n$ is called \emph{the constant distance closure of $G$}, denoted by $\cdc{G}$.
\end{defn}

The idea of repetitive augmenting the graph $G$ by edges in $\upairs{G}$ is 
to decrease the number of inequalities checking injectivity of compatible realizations ---
adjacent vertices must be always mapped to different points.
This can be seen in Example~\ref{ex:notMovable} --- the construction of a flexible labeling 
from a NAC-coloring described in~\cite{flexibleLabelings} always coincides the vertices $1$ and $4$, or $2$ and $5$.
But the edges $\{1,4\}$ and $\{2,5\}$ are added to the constant distance closure already in the first iteration.

In other words, considering the constant distance closure, while seeking for a proper flexible labeling,
utilizes more information from the graph than taking the graph itself.
For instance, since $\nac{\cdc{G}}\subset\nac{G}$, 
the NAC-colorings of $G$ that are active only for motions with non-injective realizations might be eliminated.
This is summarized by the following statement.
\begin{thm}	\label{thm:constDistClosureMovable}
	A graph $G$ is movable if and only if the constant distance closure of $G$ is movable.
\end{thm}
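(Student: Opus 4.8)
The plan is to obtain the statement from repeated application of Corollary~\ref{cor:constantDistance}, which already settles the case of adding a single edge. By the definition of the constant distance closure there is a sequence $G=G_0,G_1,\dots,G_n=\cdc{G}$ in which $G_i=(V_G,E_{G_{i-1}}\cup\upairs{G_{i-1}})$. Since $\upairs{G_{i-1}}$ is a finite set of pairs, I would first refine each passage $G_{i-1}\to G_i$ into a chain of single-edge additions $G_{i-1}=H_0,H_1,\dots,H_m=G_i$, where $H_j=(V_G,E_{H_{j-1}}\cup\{e_j\})$ and $e_j=\{u_j,v_j\}\in\upairs{G_{i-1}}$. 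The goal is then to apply Corollary~\ref{cor:constantDistance} to each single addition $H_{j-1}\to H_j$ and to compose the resulting equivalences: concatenating them over all $j$ and then over $i=1,\dots,n$ yields that $G$ is movable if and only if $\cdc{G}$ is movable.

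The only point requiring care is that the hypothesis of Corollary~\ref{cor:constantDistance} survives each intermediate step: the corollary asks for a $u_jv_j$-path that is unicolor for \emph{every} NAC-coloring of the current graph $H_{j-1}$, whereas membership $e_j\in\upairs{G_{i-1}}$ only provides a $u_jv_j$-path $P_j$ that is unicolor for every NAC-coloring of $G_{i-1}$. I would bridge this with a restriction observation. All graphs occurring in the process share the vertex set $V_G$, and $G_{i-1}$ is a connected spanning subgraph of $H_{j-1}$; I claim that the restriction $\delta|_{E_{G_{i-1}}}$ of any $\delta\in\nac{H_{j-1}}$ lies in $\nac{G_{i-1}}$. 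No almost cycle can be created by restricting, since every cycle of $G_{i-1}$ is also a cycle of $H_{j-1}$. Surjectivity also holds: if $E_{G_{i-1}}$ were monochromatic under $\delta$, say entirely \blue{}, then the endpoints of any edge of $E_{H_{j-1}}\setminus E_{G_{i-1}}$ coloured \red{} would be joined by an all-\blue{} path in the connected graph $G_{i-1}$, producing an almost \blue{} cycle in $H_{j-1}$ and contradicting $\delta\in\nac{H_{j-1}}$.

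With this in hand the argument closes. Because $P_j$ lies in $G_{i-1}$ and is unicolor for every element of $\nac{G_{i-1}}$, it is in particular unicolor under $\delta|_{E_{G_{i-1}}}$, hence under $\delta$ itself, for every $\delta\in\nac{H_{j-1}}$. Thus $P_j$ is a $u_jv_j$-path unicolor for all NAC-colorings of $H_{j-1}$, and Corollary~\ref{cor:constantDistance} gives that $H_{j-1}$ is movable if and only if $H_j$ is movable. Chaining these equivalences over all single-edge steps proves the theorem; termination of the closure process is guaranteed since only finitely many edges can be added to a graph on $V_G$.

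The main obstacle is exactly the discrepancy flagged above: adding edges shrinks the set of NAC-colorings, so one must verify that the unicolor-path certificate supplied by $\upairs{G_{i-1}}$ is not invalidated for the enlarged graphs appearing inside a single iteration. The restriction lemma is what makes this safe, and the global assumption that $G$ (hence every $G_{i-1}$) is connected is essential for its surjectivity part.
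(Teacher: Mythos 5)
Your proof is correct and follows the same route as the paper, which simply states that the theorem follows by recursive application of Corollary~\ref{cor:constantDistance}. The extra care you take --- splitting each closure step into single-edge additions and checking via the restriction argument that the unicolor-path certificate from $\upairs{G_{i-1}}$ survives for the intermediate graphs $H_{j-1}$ --- fills in exactly the detail the paper leaves implicit (the same restriction observation underlies its Lemma~\ref{lem:constDistClosureSubgraph}), so this is a faithful, slightly more explicit version of the intended proof.
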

\begin{proof}
	The theorem follows by recursive application of Corollary~\ref{cor:constantDistance}.
\end{proof}
An immediate consequence is that a graph $G$ can be movable
only if the constant distance closure $\cdc{G}$ has a flexible labeling,
namely, we relax the requirement on the labeling to be proper.
By Theorem~\ref{thm:nacflexible}, it is equivalent to say that if $G$ is movable,
then $\cdc{G}$ has a NAC-coloring.
We can reformulate this necessary condition using the next two lemmas.

\begin{lem} \label{lem:constDistClosureSubgraph}
	Let $G$ be a graph.
	If $H$ is a subgraph of $G$,
	then the constant distance closure $\cdc{H}$ is a subgraph of the constant distance closure $\cdc{G}$.
\end{lem}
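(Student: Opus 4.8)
The plan is to reduce the statement to a single monotonicity observation about unicolor paths under passing to a supergraph, and then to run an induction along the sequence of graphs defining $\cdc{H}$.

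The crucial observation I would isolate first is the following: if $A$ is a subgraph of a graph $B$ and a path $P$ in $A$ is unicolor for every $\delta\in\nac{A}$, then $P$ is also unicolor for every $\delta\in\nac{B}$. To prove it, fix $\delta\in\nac{B}$ and restrict it to $E_A$. Every cycle of $A$ is a cycle of $B$, so the restricted coloring inherits the property of having no almost \red{} and no almost \blue{} cycle. If this restriction is surjective, then it is a genuine NAC-coloring of $A$, and $P$ is unicolor under it by hypothesis; if it is not surjective, then the restriction is monochromatic on $A$, so $P$ is trivially unicolor. In either case $P$ is unicolor under $\delta$, and $P$ is a legitimate path in $B$ since $E_A\subseteq E_B$.

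With this in hand, I would prove the lemma by induction along the defining sequence of $\cdc{H}$. Write $H=H_0,H_1,\dots,H_m=\cdc{H}$ with $H_i=(V_{H_{i-1}},E_{H_{i-1}}\cup\upairs{H_{i-1}})$, and claim that each $H_i$ is a subgraph of $\cdc{G}$. The base case $H_0=H\subseteq G\subseteq\cdc{G}$ is immediate. For the inductive step, assume $H_i$ is a subgraph of $\cdc{G}$; since $V_{H_i}=V_H\subseteq V_G$ and $E_{H_i}\subseteq E_{\cdc{G}}$, it suffices to show $\upairs{H_i}\subseteq E_{\cdc{G}}$. Take $\{u,v\}\in\upairs{H_i}$, witnessed by a $uv$-path $P$ in $H_i$ that is unicolor for every $\delta\in\nac{H_i}$. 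Applying the observation with $A=H_i$ and $B=\cdc{G}$, the path $P$ — which lies in $\cdc{G}$ by the induction hypothesis — is unicolor for every $\delta\in\nac{\cdc{G}}$.

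Finally I would invoke the defining property of the constant distance closure, namely $\upairs{\cdc{G}}=\emptyset$. If $uv\notin E_{\cdc{G}}$, then $P$ would exhibit $\{u,v\}$ as an element of $\upairs{\cdc{G}}$, contradicting $\upairs{\cdc{G}}=\emptyset$; hence $uv\in E_{\cdc{G}}$. Thus $\upairs{H_i}\subseteq E_{\cdc{G}}$, so $H_{i+1}$ is a subgraph of $\cdc{G}$, completing the induction and yielding $\cdc{H}=H_m\subseteq\cdc{G}$. The only delicate point is the non-surjective case of the restriction argument, where a NAC-coloring of the larger graph may degenerate to a monochromatic coloring on the subgraph; this is precisely why the condition must be phrased as ``unicolor for \emph{all} NAC-colorings'' rather than as a bijective correspondence between $\nac{H}$ and $\nac{G}$.
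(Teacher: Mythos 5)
Your proof is correct and follows essentially the same route as the paper's: the heart of both arguments is the monotonicity observation that a path unicolor for all NAC-colorings of a subgraph remains unicolor for all NAC-colorings of any supergraph (the paper phrases this as $\upairs{H}\subseteq\upairs{G}$ and inducts along the two parallel sequences $H_i$, $G_i$, while you induct along the $H$-sequence alone and compare each $H_i$ directly with $\cdc{G}$ via $\upairs{\cdc{G}}=\emptyset$ --- an equivalent bookkeeping choice). Your explicit handling of the non-surjective case is in fact a point where you are more careful than the paper, whose one-line justification ``$\delta|_{E_H}\in\nac{H}$'' silently ignores that the restriction of a NAC-coloring to a subgraph may fail to be surjective.
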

\begin{proof}
	If we show that $\upairs{H}\subset \upairs{G}$, then the claim follows by induction.
	Let a non\-edge~$uv$ be in $\upairs{H}$, namely, there exists a path $P$ from $u$ to $v$ such that
	it is monochromatic for all NAC-colorings in $\nac{H}$. 
	But then $uv$ is also in $\upairs{G}$, since the path $P$ is monochromatic also for all $\delta\in\nac{G}$,
	because $P$ is a subgraph of $H$ and either $\delta|_{E_H}\in\nac{H}$ or $|\delta(E_H)|=1$.
\end{proof}
Now, we can show that having a NAC-coloring and being non-complete is the same for a constant distance closure.

\begin{lem} \label{lem:constDistClosureComplete}
	Let $G$ be a graph. The constant distance closure $\cdc{G}$ is the complete graph if 
	and only if there exists a spanning subgraph of $\cdc{G}$ that has no NAC-coloring.
\end{lem}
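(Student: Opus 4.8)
The plan is to prove both implications, hinging on the fact that a complete graph on at least two vertices has no NAC-coloring. I would first establish this auxiliary fact: in any NAC-coloring every triangle must be unicolor, since a $3$-cycle cannot contain at least two edges of each color and hence cannot be a non-unicolor cycle. In a complete graph any two adjacent edges $uv$ and $vw$ lie in the triangle on $\{u,v,w\}$ and therefore share a color; as the complete graph is connected, this forces all edges to receive the same color, contradicting surjectivity (and $K_2$ fails surjectivity outright). With this in hand the forward implication is immediate: if $\cdc{G}$ is complete, then $\cdc{G}$ is a spanning subgraph of itself that has no NAC-coloring, which is exactly the required subgraph.

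For the converse, let $H$ be a spanning subgraph of $\cdc{G}$ with $\nac{H}=\emptyset$. The key observation is that the condition defining $\upairs{H}$ becomes vacuous: a path is ``unicolor for all $\delta\in\nac{H}$'' automatically when $\nac{H}=\emptyset$. Since $H$ is connected (all graphs in the paper are), every pair $\{u,v\}$ of distinct vertices with $uv\notin E_H$ is joined by some path in $H$, and hence lies in $\upairs{H}$. Consequently $(V_H,E_H\cup\upairs{H})$ is already the complete graph on $V_H$, so $\cdc{H}$ is complete. Now I invoke Lemma~\ref{lem:constDistClosureSubgraph}: as $H$ is a subgraph of $\cdc{G}$, we obtain $\cdc{H}\subseteq\cdc{\cdc{G}}$. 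The closure is idempotent, since $\cdc{G}=G_n$ satisfies $\upairs{G_n}=\emptyset$, so reapplying the construction changes nothing and $\cdc{\cdc{G}}=\cdc{G}$. Because $H$ is spanning we have $V_H=V_{\cdc{G}}$, so the complete graph $\cdc{H}$ is contained in $\cdc{G}$ on the same vertex set, forcing $\cdc{G}$ itself to be complete.

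The main obstacle I anticipate is not a computation but getting the bookkeeping of the converse exactly right. Two points must be made explicit, as the argument would silently break without them: the idempotency $\cdc{\cdc{G}}=\cdc{G}$, which I would read off directly from the definition of the constant distance closure via $\upairs{G_n}=\emptyset$; and the use of connectivity of $H$, which is what guarantees that the vacuously satisfied unicolor condition actually supplies \emph{every} missing edge. Over a disconnected $H$ the same reasoning would only complete each component to a clique, and $\cdc{H}$ would fail to be complete, so invoking the paper's standing assumption that all graphs are connected is essential here.
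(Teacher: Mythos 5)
Your proof is correct and follows essentially the same route as the paper: the forward direction takes $\cdc{G}$ itself as the spanning subgraph, and the converse observes that $\upairs{H}$ contains all nonedges when $\nac{H}=\emptyset$ and then applies Lemma~\ref{lem:constDistClosureSubgraph}. You merely make explicit a few points the paper leaves implicit (that complete graphs have no NAC-coloring, the idempotency $\cdc{\cdc{G}}=\cdc{G}$, and the role of connectedness), which is fine.
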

\begin{proof}
	If $\cdc{G}$ is the complete graph, then it has clearly no NAC-coloring.
	For the opposite implication,
	assume that there is a spanning subgraph $H$ of $\cdc{G}$ that has no NAC-coloring.
	Trivially, $\upairs{H}$ consists of all nonedges of~$H$.
	Hence, the constant distance closure of $H$ is the complete graph.
	By Lemma~\ref{lem:constDistClosureSubgraph}, $\cdc{G}$ is also complete.
\end{proof}

The previous statement clarifies that the necessary condition obtained from Theorem~\ref{thm:constDistClosureMovable}
can be expressed as follows
by relaxing the requirement of a flexible labeling being proper.
\begin{cor}\label{cor:necesarryCondProper}
	Let $G$ be a graph.
	If the constant distance closure $\cdc{G}$ is the complete graph, then $G$ is not movable.
\end{cor}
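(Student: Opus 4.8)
The plan is to combine the two structural theorems already at our disposal --- Theorem~\ref{thm:constDistClosureMovable} and Theorem~\ref{thm:nacflexible} --- so that the statement collapses to the elementary fact that a complete graph admits no NAC-coloring. No new geometry is required; the entire argument is a chaining of previously established results.

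First I would reduce the claim to a statement about $\cdc{G}$ alone. By Theorem~\ref{thm:constDistClosureMovable}, $G$ is movable if and only if $\cdc{G}$ is movable, so it suffices to show that a complete graph is not movable. Since movability requires a \emph{proper} flexible labeling, and a proper flexible labeling is in particular a flexible labeling, it is enough to prove that the complete graph $\cdc{G}$ admits no flexible labeling at all. Here I would invoke Theorem~\ref{thm:nacflexible}: a connected graph with at least one edge has a flexible labeling if and only if it has a NAC-coloring. Thus the remaining task is purely combinatorial --- to verify that the complete graph has no NAC-coloring.

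For that last step I would argue as follows. A triangle has exactly three edges; were it to use both colors in a NAC-coloring it would have to contain at least two edges of each color, which three edges cannot accommodate. Hence every triangle must be unicolor. In a complete graph on at least three vertices any two edges sharing a vertex lie in a common triangle, so adjacent edges always receive the same color; propagating this through the graph forces all edges to share a single color, contradicting the surjectivity required of a NAC-coloring. (This is precisely the observation already used in the proof of Lemma~\ref{lem:constDistClosureComplete}.)

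Chaining these together yields the corollary: if $\cdc{G}$ is complete it has no NAC-coloring, hence no flexible labeling by Theorem~\ref{thm:nacflexible}, hence no proper flexible labeling, and is therefore not movable; by Theorem~\ref{thm:constDistClosureMovable}, $G$ itself is not movable. I do not anticipate a genuine obstacle, as all the substance resides in the preceding theorems; the only point deserving care is the triangle argument establishing that the complete graph has no NAC-coloring, and this is entirely routine.
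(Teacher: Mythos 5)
Your proposal is correct and follows essentially the same route as the paper, which likewise derives the corollary by chaining Theorem~\ref{thm:constDistClosureMovable} with Theorem~\ref{thm:nacflexible} and the observation that a complete graph has no NAC-coloring (the triangle argument you spell out is the same one the paper uses in the proof of Theorem~\ref{thm:treeDecomposable}). No issues.
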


Let us use this necessary condition to prove that a certain class of Laman graphs is not movable.
We would like to thank Meera Sitharam for pointing us to this class.
\begin{defn}	\label{def:treeDecomposable}
	A graph $G$ is \emph{tree-decomposable} if it is a single edge, or
	there are three tree-decomposable subgraphs $H_1,H_2$ and $H_3$ of $G$ such that
	$V_G=V_{H_1}\cup V_{H_2} \cup V_{H_3}$, $E_G=E_{H_1}\cup E_{H_2} \cup E_{H_3}$
	and $V_{H_1}\cap V_{H_2}=\{u\}, V_{H_2}\cap V_{H_3}=\{v\}$ 
	and $ V_{H_1}\cap V_{H_3}=\{w\}$ for three distinct vertices $u,v,w\in V_G$.
\end{defn}
One could prove geometrically that the tree-decomposable graphs are not movable,
but the notion of constant distance closure allows to do it in a combinatorial way.
\begin{thm}\label{thm:treeDecomposable}
	If a graph is tree-decomposable, then it is not movable.
\end{thm}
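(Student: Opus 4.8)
The plan is to prove the stronger statement that the constant distance closure $\cdc{G}$ of a tree-decomposable graph $G$ is the complete graph on $V_G$; non-movability then follows at once from Corollary~\ref{cor:necesarryCondProper}. I would argue by induction on $|V_G|$, following the recursive structure of Definition~\ref{def:treeDecomposable}. The base case is a single edge, which is already $K_2$ and hence equals its own (complete) constant distance closure.

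For the inductive step, write $G=H_1\cup H_2\cup H_3$ with $V_{H_1}\cap V_{H_2}=\{u\}$, $V_{H_2}\cap V_{H_3}=\{v\}$, $V_{H_1}\cap V_{H_3}=\{w\}$ for distinct $u,v,w$. Each $H_i$ is tree-decomposable and, as one checks, has strictly fewer vertices than $G$ (for instance $w\in V_{H_1}\setminus V_{H_2}$), so the induction hypothesis gives that $\cdc{H_i}$ is the complete graph on $V_{H_i}$. Since $H_i$ is a subgraph of $G$, Lemma~\ref{lem:constDistClosureSubgraph} yields $\cdc{H_i}\subseteq\cdc{G}$, so $\cdc{G}$ contains the cliques on $V_{H_1}$, $V_{H_2}$, $V_{H_3}$. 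Let $K$ be the union of these three cliques. Then $K$ is a \emph{spanning} subgraph of $\cdc{G}$, because $V_K=V_{H_1}\cup V_{H_2}\cup V_{H_3}=V_G$.

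The heart of the argument is to show that $K$ has no NAC-coloring, so that Lemma~\ref{lem:constDistClosureComplete} immediately forces $\cdc{G}$ to be complete. I would first record the elementary observation that every triangle is unicolor in any NAC-coloring: a $3$-cycle cannot contain at least two edges of each color, hence it must be monochromatic. Consequently, inside each clique all edges receive one common color (edges sharing a vertex by the triangle observation, and disjoint edges through an intermediate edge). Now the edges $uw$, $uv$, $vw$ lie in the cliques on $V_{H_1}$, $V_{H_2}$, $V_{H_3}$ respectively and together form the triangle $uvw$ in $K$; being unicolor, this triangle forces the three clique-colors to coincide. Hence every edge of $K$ carries the same color, contradicting surjectivity of a NAC-coloring, so $\nac{K}=\emptyset$.

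Applying Lemma~\ref{lem:constDistClosureComplete} to the spanning subgraph $K$ of $\cdc{G}$, which has no NAC-coloring, shows that $\cdc{G}$ is complete, completing the induction; Corollary~\ref{cor:necesarryCondProper} then gives that $G$ is not movable. The only genuinely delicate point is the triangle-unicolor observation together with the verification that the single common color propagates across all three pieces through the shared triangle $uvw$; everything else is bookkeeping with the monotonicity of the constant distance closure (Lemma~\ref{lem:constDistClosureSubgraph}) and with the completeness criterion of Lemma~\ref{lem:constDistClosureComplete}.
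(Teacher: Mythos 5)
Your proposal is correct and follows essentially the same route as the paper's proof: induction over the tree-decomposition, Lemma~\ref{lem:constDistClosureSubgraph} to place the three cliques on $V_{H_1},V_{H_2},V_{H_3}$ inside $\cdc{G}$, the unicolor-triangle argument on $uvw$ to rule out a NAC-coloring of their union, and Lemma~\ref{lem:constDistClosureComplete} together with Corollary~\ref{cor:necesarryCondProper} to conclude. The only cosmetic difference is that you induct on $|V_G|$ (with the needed check that each $H_i$ is strictly smaller) rather than on the recursive construction itself.
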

\begin{proof}
	Let $G$ be a tree-decomposable graph. 
	It is sufficient to show that the constant
	distance closure $\cdc{G}$ is the complete graph and use Corollary~\ref{cor:necesarryCondProper}.
	We proceed by induction on the tree-decomposable construction.
	Clearly, the constant distance closure of a single edge is the edge itself which is $K_2$.
	Let $H_1,H_2$ and $H_3$ be tree-decomposable subgraphs of $G$ as in Definition~\ref{def:treeDecomposable},
	with the pairwise common vertices $u,v$ and $w$.
	By Lemma~\ref{lem:constDistClosureSubgraph} and induction assumption, 
	the subgraphs $H'_1,H'_2$ and $H'_3$ of $\cdc{G}$ induced by $V_{H_1}, V_{H_2}$ and $V_{H_3}$ respectively are complete.
	Thus, there is no NAC-coloring of $H'=(V_{H_1}\cup V_{H_2}\cup V_{H_3}, E_{H'_1}\cup E_{H'_2}\cup E_{H'_3})$, 
	since all edges in a complete graph must have the same color 
	and $H'_1,H'_2$ and $H'_3$ contain each an edge of the triangle induced by~$u,v,w$.
	By Lemma~\ref{lem:constDistClosureComplete}, $\cdc{G}$ is complete, since $H'$ is its spanning subgraph .
\end{proof}
We remark that the class of so called H1 graphs is a subset of tree-decomposable graphs,
hence, they are not movable. 
A graph is called H1 if it can be constructed from a single edge by a sequence of Henneberg~I steps ---
each step adds a new vertex by linking it to two existing ones.
The next statement recalls the known fact that Henneberg~I steps do not affect movability.

\begin{lem}	\label{lem:degTwo}
	Let $G$ be a graph and $u\in V_G$ be a vertex of degree two. The graph $G$ is movable if and only if $G'=G\setminus u$ is movable.
\end{lem}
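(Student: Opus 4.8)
The plan is to prove both implications directly in terms of realizations, exploiting that a degree-two vertex has at most two admissible positions once its two neighbors are placed. Write $a,b$ for the two neighbors of $u$, so that $ua,ub\in E_G$ and $G'=G\setminus u$ has vertex set $V_G\setminus\{u\}$ with all edges of $G$ except $ua$ and $ub$.

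For the direction ``if $G$ is movable then $G'$ is movable'', I take a proper flexible labeling $\lambda$ of $G$ and consider its restriction $\lambda'=\lambda|_{E_{G'}}$. Every realization $\rho$ of $G$ compatible with $\lambda$ restricts to a realization $\rho|_{V_{G'}}$ of $G'$ compatible with $\lambda'$, and this restriction is injective whenever $\rho$ is. The key observation is that the induced map on congruence classes is at most two-to-one: given an injective realization of $G'$, the points $\rho(a)$ and $\rho(b)$ are distinct, so the position of $u$ must lie on the intersection of the circle of radius $\lambda(ua)$ about $\rho(a)$ and the circle of radius $\lambda(ub)$ about $\rho(b)$, which consists of at most two points. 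Hence the infinitely many congruence classes of injective realizations of $G$ map onto infinitely many congruence classes of $G'$, and $\lambda'$ is a proper flexible labeling.

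For the converse, I start from a proper flexible labeling $\lambda'$ of $G'$ with its infinitely many injective realizations and extend it to $G$ by placing $u$. I fix one realization $\rho_0'$ and choose positive lengths $\lambda(ua),\lambda(ub)$ forming a nondegenerate triangle with the segment $\rho_0'(a)\rho_0'(b)$; since $\rho_0'(a)\neq\rho_0'(b)$ this is possible, and by continuity the strict triangle inequality persists on an open, hence infinite, subfamily of the realizations of $G'$. On this subfamily each realization extends to a realization of $G$ by placing $u$ at a circle--circle intersection point, and distinct congruence classes of $G'$ yield distinct congruence classes of $G$, because any congruence of $G$ restricts to one of $G'$. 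It then remains to secure injectivity at the new vertex: for each already placed vertex $w$, the set of realizations in which the chosen position of $u$ satisfies $\norm{\rho(u)-\rho(w)}=0$ is a proper closed subset of the one-dimensional motion, so discarding these finitely many configurations still leaves infinitely many injective realizations of $G$.

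The main obstacle is the converse direction, specifically the simultaneous control of two genericity requirements: that the two circles defining the position of $u$ actually meet along a whole subfamily (not merely at the single realization $\rho_0'$), and that the resulting vertex avoids all previously placed vertices. I expect to dispatch both by the same dimension count rather than by explicit computation --- each failure is a proper algebraic condition on the irreducible curve underlying the motion, so its complement is cofinite. A minor separate point is connectivity of $G'$: if removing $u$ were to disconnect the graph the statement must be read componentwise, but whenever $a$ and $b$ remain joined in $G'$ the argument above applies verbatim.
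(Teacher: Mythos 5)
Your proof is correct and follows essentially the same route as the paper: in one direction the restriction to $G'$ is at most two-to-one on congruence classes, and in the other the degree-two vertex is re-attached at a circle--circle intersection over the motion of $G'$. The only real difference is that the paper sidesteps your continuity/genericity step by choosing both new edge lengths equal to the \emph{maximal} distance between the two neighbours over all compatible realizations, which guarantees that the two circles meet (in two distinct points) for every realization simultaneously.
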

\begin{proof}
	Let $v$ and $w$ be the neighbours of $u$.
	If $\lambda'$ is a proper flexible labeling of $G'$, then~$\lambda: E_G \rightarrow \RR_+$ given by $\lambda'|_{E_{G'}}=\lambda$ and
	$\lambda(uv)=\lambda(uw)=L$, where $L$ is the maximal distance between $v$ and $w$ in all realizations compatible with $\lambda'$,
	is a proper flexible labeling of $G$.
	On the other hand, the restriction of a proper flexible labeling of $G$ to~$G'$ is a proper flexible labeling,
	since there are only two possible points where $u$ can be placed if $v$ and $w$ are mapped to distinct points,
	 i.e., there must be infinitely many realization of~$G'$.
\end{proof}

Since the previous lemma justifies that the question of movability of a graph with vertices of degree two
reduces to a smaller graph with all degrees being different from two,
we can provide a list of ``interesting" graphs regarding movability.
By ``interesting", we mean, besides all vertices having degree at least three,
also the fact that they are spanned by a Laman graph. 
Recall that graphs that are not spanned by a Laman graph are clearly movable,
since a generic labeling is proper flexible.
We are interested in graphs that can be movable only due to a non-generic labeling.
So we conclude this section by the list of the ``interesting" constant distance closures up to 8 vertices:

\begin{thm}	\label{thm:listEightVertices}
	Let $G$ be a graph with at most $8$ vertices such that it has a spanning Laman subgraph
	and $\cdc{G}$ has no vertex of degree two.
	If $G$ satisfies the necessary condition of movability, i.e, 
	the constant distance closure $\cdc{G}$ is not complete, then~$\cdc{G}$ is 
	one of the graphs $K_{3,3}, K_{3,4}, K_{3,5}$, $K_{4,4}, L_1, \dots, L_6,  Q_1, \dots, Q_6,S_1, \dots, S_5$, or a spanning subgraph thereof,
	where the non-bipartite graphs are given by Figure~\ref{fig:constDistClosures}.
\end{thm}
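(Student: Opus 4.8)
The plan is to reduce the statement to a finite, and hence computer-checkable, enumeration of graphs that are equal to their own constant distance closure. First I would observe that $\cdc{\cdot}$ is idempotent: by construction $\upairs{\cdc{G}}=\emptyset$, so $\cdc{\cdc{G}}=\cdc{G}$. Writing $H=\cdc{G}$, the hypotheses translate into intrinsic properties of $H$. It contains a spanning Laman subgraph (any spanning Laman subgraph of $G$ survives, since $G\subseteq H$ on the same vertex set), it has minimum degree at least three (a spanning Laman subgraph already forces minimum degree at least two, and the hypothesis forbids degree two), it is not complete, and it satisfies $\upairs{H}=\emptyset$. Conversely, every such $H$ equals $\cdc{H}$, so it genuinely occurs as a constant distance closure. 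Thus it suffices to classify, for each $n\le 8$, all graphs $H$ on $n$ vertices that are generically rigid (equivalently, spanned by a Laman subgraph), have minimum degree at least three, are not complete, and satisfy $\upairs{H}=\emptyset$, and to show that the maximal ones under the spanning-subgraph order are exactly the named graphs.

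The enumeration itself I would organise as follows. For each $n\in\{3,\dots,8\}$, generate all graphs on $n$ vertices up to isomorphism with an isomorph-free generator (such as \texttt{nauty}), keep those that are generically rigid --- tested via the pebble game, i.e.\ the presence of a spanning subgraph with exactly $2n-3$ edges independent in the $(2,3)$-sparsity matroid --- and discard the complete graph together with any graph having a vertex of degree two. For each surviving candidate $H$ I would compute the full set $\nac{H}$ by scanning edge two-colorings and rejecting those that admit an almost-red or almost-blue cycle; here the freedom to swap the two colors lets me fix one edge's color and thereby halve the search. Then $\upairs{H}$ is obtained combinatorially: call two edges \emph{always monochromatic} if they receive the same color in every $\delta\in\nac{H}$. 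This is an equivalence relation on $E_H$, and a $u$--$v$ path is unicolor for all $\delta\in\nac{H}$ precisely when all of its edges lie in a single class. Consequently, for each class one forms the subgraph carrying its edges and records every non-adjacent pair lying in a common connected component; the graph $H$ is closed exactly when no such pair exists.

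Having produced the finite list of all closed candidates, the final step is bookkeeping: sort them up to isomorphism, take the maximal elements with respect to edge-inclusion on a fixed vertex set, and verify that these maxima are precisely $K_{3,3},K_{3,4},K_{3,5},K_{4,4}$, the graphs $L_1,\dots,L_6$, $Q_1,\dots,Q_6$ and $S_1,\dots,S_5$ of Figure~\ref{fig:constDistClosures}, with every remaining closed candidate being a spanning subgraph of one of them. The bipartite members $K_{3,3},K_{3,4},K_{3,5},K_{4,4}$ are recognised directly, whereas the thirteen non-bipartite maxima require the explicit drawings.

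The main obstacle is twofold. Computationally, the bottleneck is producing $\nac{H}$ for every candidate, which is in principle a search over $2^{|E_H|}$ colorings; I would prune aggressively by fixing one edge's color, propagating the almost-cycle constraints as I extend a partial coloring, and quotienting by $\operatorname{Aut}(H)$, so as to keep the denser rigid graphs on $7$ and $8$ vertices (where $|E_H|$ can approach $\binom{n}{2}$) tractable. Conceptually, the delicate point is guaranteeing \emph{completeness} of the search: since the theorem asserts that nothing outside the list can arise, one must be certain that the rigidity test, the isomorphism reduction, and the closure computation are each correct and exhaustive. The keystone is the correctness of the $\upairs{H}$ computation --- in particular the reduction of ``unicolor path in every NAC-coloring'' to connectivity inside the always-monochromatic edge classes --- on which the entire classification rests.
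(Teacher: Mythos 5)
Your proposal is correct and, like the paper, ultimately rests on an exhaustive finite computation over the generically rigid graphs on at most $8$ vertices; the paper's own proof is precisely this, citing the database of Laman graphs and asserting that the constant distance closures were computed. The one genuine difference is organisational: the paper computes $\cdc{G}$ for every graph $G$ spanned by a Laman graph and inspects the image of the closure map, whereas you use idempotency of $\CDC$ to characterise that image as the set of fixed points, i.e.\ the graphs $H$ with a spanning Laman subgraph, minimum degree at least three, $H$ not complete and $\upairs{H}=\emptyset$. This reformulation is exactly right (the two directions being $\upairs{\cdc{G}}=\emptyset$ by definition of the closure, and $\cdc{H}=H$ whenever $\upairs{H}=\emptyset$), and it spares you iterating the closure at the price of filtering a larger candidate set. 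Your reduction of ``unicolor in every NAC-coloring'' to connectivity within the classes of the always-monochromatic equivalence relation on $E_H$ is also correct, including the degenerate case $\nac{H}=\emptyset$, where all edges vacuously form one class and the criterion correctly forces a connected closed $H$ to be complete, in agreement with Lemma~\ref{lem:constDistClosureComplete}. Neither your write-up nor the paper's avoids trusting the computation itself, so there is no gap beyond what the paper already delegates to the machine.
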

\begin{proof}
	Using the list of Laman graphs \cite{listLamanGraphs}, 
	one can compute constant distance closures of all graphs spanned by a Laman graph with at most 8 vertices.	
	The computation shows that each constant distance closure is either a complete graph, 
	or it has a vertex of degree two,
	or it is a spanning subgraph (or the full graph) of one of 
	 $K_{3,3}, K_{3,4}, K_{3,5}, K_{4,4}$ or the graphs in Figure~\ref{fig:constDistClosures}.
\end{proof}

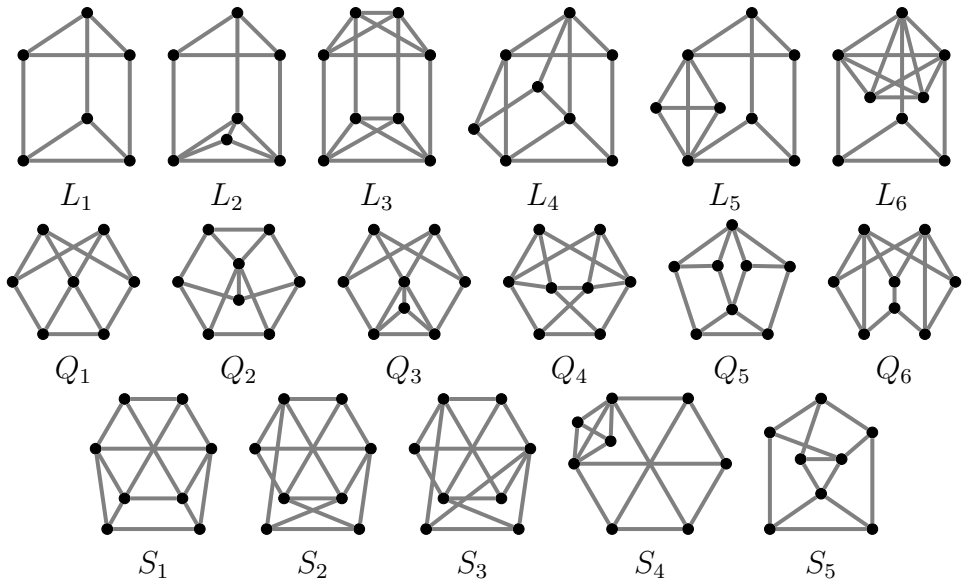
\begin{figure}[htb]
	\begin{center}
		\begin{tabular}{cccccc}
			\begin{tikzpicture}[scale=1.4]
			    \node[vertex] (0) at (0.60, 0.40) {};
			    \node[vertex] (1) at (1, 0) {};
			    \node[vertex] (2) at (1, 1) {};
			    \node[vertex] (3) at (0.60, 1.4) {};
			    \node[vertex] (6) at (0, 1) {};
			    \node[vertex] (7) at (0, 0) {};
			    \draw[edge] (0)edge(1) (0)edge(7) (1)edge(7) (2)edge(3) (2)edge(6) (3)edge(6)  ;
			    \draw[edge] (0)edge(3) (1)edge(2)  (6)edge(7)  ;
			\end{tikzpicture}
			&
			\begin{tikzpicture}[scale=1.4]
			    \node[vertex] (0) at (0.60, 0.40) {};
			    \node[vertex] (1) at (1, 0) {};
			    \node[vertex] (2) at (1, 1) {};
			    \node[vertex] (3) at (0.60, 1.4) {};
			    \node[vertex] (4) at (0.50, 0.2) {};
			    \node[vertex] (6) at (0, 1) {};
			    \node[vertex] (7) at (0, 0) {};
			    \draw[edge] (0)edge(1) (0)edge(7) (1)edge(7) (2)edge(3) (2)edge(6) (3)edge(6) (4)edge(0) (4)edge(1) (4)edge(7)  ;
			    \draw[edge] (0)edge(3) (1)edge(2)  (6)edge(7)  ;
			\end{tikzpicture}
	    &
			\begin{tikzpicture}[scale=1.4]
			    \node[vertex] (0) at (0.70, 0.40) {};
			    \node[vertex] (1) at (1, 1) {};
			    \node[vertex] (2) at (0.30, 1.4) {};
			    \node[vertex] (3) at (0, 1) {};
			    \node[vertex] (4) at (0, 0) {};
			    \node[vertex] (5) at (0.30, 0.40) {};
			    \node[vertex] (6) at (1, 0) {};
			    \node[vertex] (7) at (0.70, 1.4) {};
			    \draw[edge] (0)edge(4) (0)edge(5) (0)edge(6) (1)edge(2) (1)edge(3) (1)edge(7) (2)edge(3) (2)edge(7) (3)edge(7) (4)edge(5) (4)edge(6) (5)edge(6)  ;
			    \draw[edge] (0)edge(7) (1)edge(6) (2)edge(5) (3)edge(4)  ;
			\end{tikzpicture}
			&
			\begin{tikzpicture}[scale=1.4]
			    \node[vertex] (0) at (1, 0) {};
			    \node[vertex] (1) at (-0.30, 0.30) {};
			    \node[vertex] (2) at (0.30, 0.70) {};
			    \node[vertex] (3) at (1, 1) {};
			    \node[vertex] (4) at (0.60, 1.4) {};
			    \node[vertex] (5) at (0, 1) {};
			    \node[vertex] (6) at (0.60, 0.40) {};
			    \node[vertex] (7) at (0, 0) {};
			    \draw[edge] (0)edge(6) (0)edge(7) (1)edge(2) (3)edge(4) (3)edge(5) (4)edge(5) (6)edge(7)  ;
			    \draw[edge] (0)edge(3) (1)edge(5) (1)edge(7) (2)edge(4) (2)edge(6) (4)edge(6) (5)edge(7)  ;
			\end{tikzpicture}
			&
			\begin{tikzpicture}[scale=1.4]
			    \node[vertex] (0) at (0.60, 0.40) {};
			    \node[vertex] (1) at (1, 0) {};
			    \node[vertex] (2) at (1, 1) {};
			    \node[vertex] (3) at (0.60, 1.4) {};
			    \node[vertex] (4) at (0.30, 0.50) {};
			    \node[vertex] (5) at (-0.30, 0.50) {};
			    \node[vertex] (6) at (0, 1) {};
			    \node[vertex] (7) at (0, 0) {};
			    \draw[edge] (0)edge(1) (0)edge(7) (1)edge(7) (2)edge(3) (2)edge(6) (3)edge(6)  ;
			    \draw[edge] (0)edge(3) (1)edge(2) (4)edge(5) (4)edge(6) (4)edge(7) (5)edge(6) (5)edge(7) (6)edge(7)  ;
			\end{tikzpicture}
			&
			\begin{tikzpicture}[scale=1.4]
			    \node[vertex] (0) at (0.60, 0.40) {};
			    \node[vertex] (1) at (0, 0) {};
			    \node[vertex] (2) at (1, 0) {};
			    \node[vertex] (3) at (0.80, 0.60) {};
			    \node[vertex] (4) at (0.30, 0.60) {};
			    \node[vertex] (5) at (1, 1) {};
			    \node[vertex] (6) at (0, 1) {};
			    \node[vertex] (7) at (0.60, 1.4) {};
			    \draw[edge] (0)edge(1) (0)edge(2) (1)edge(2) (3)edge(4) (3)edge(5) (3)edge(6) (3)edge(7) (4)edge(5) (4)edge(6) (4)edge(7) (5)edge(6) (5)edge(7) (6)edge(7)  ;
			    \draw[edge] (0)edge(7) (1)edge(6) (2)edge(5)  ;
			\end{tikzpicture}
			\\
			$L_1$ & $L_2$ &$L_3$ &$L_4$ &$L_5$ &$L_6$ 
		\end{tabular}
	\begin{tabular}{cccccc}
		\begin{tikzpicture}[scale=1*0.8]
		      \node[vertex] (3) at (0.5, -0.866025) {};
		      \node[vertex] (0) at (1., 0.) {};
		      \node[vertex] (4) at (0.5, 0.866025) {};
		      \node[vertex] (5) at (-0.5, 0.866025) {};
		      \node[vertex] (1) at (-1.,  0.) {};
		      \node[vertex] (2) at (-0.5, -0.866025) {};
	   	      \node[vertex] (6) at (0,0) {};
		      \draw[edge] (2)edge(3)  (3)edge(6) (6)edge(2) (1)edge(4) (1)edge(5) (4)edge(0) (0)edge(5);
		      \draw[edge] (2)edge(1) (6)edge(4) (6)edge(5) (0)edge(3);
		\end{tikzpicture}
		&
		\begin{tikzpicture}[scale=1*0.8]
		    \node[vertex] (0) at (-1.00, 0.000) {};
		    \node[vertex] (1) at (1.00, 0.000) {};
		    \node[vertex] (2) at (0.500, 0.866) {};
		    \node[vertex] (3) at (0.500, -0.866) {};
		    \node[vertex] (4) at (-0.500, -0.866) {};
		    \node[vertex] (5) at (-0.500, 0.866) {};
		    \node[vertex] (6) at (0.000, -0.300) {};
		    \node[vertex] (7) at (0.000, 0.300) {};
		    \draw[edge] (1)edge(3) (6)edge(7) (0)edge(5)  ;
		    \draw[edge] (3)edge(4) (3)edge(7) (4)edge(7) (1)edge(6)  ;
		    \draw[edge] (0)edge(6) (5)edge(7) (2)edge(5) (2)edge(7)  ;
		    \draw[edge] (1)edge(2) (0)edge(4)  ;
		\end{tikzpicture}
		&
		\begin{tikzpicture}[scale=1*0.8]
		    \node[vertex] (1) at (0.500, 0.866) {};
		    \node[vertex] (0) at (-0.500, 0.866) {};
		    \node[vertex] (2) at (-1.00, 0.000) {};
		    \node[vertex] (3) at (1.00, 0.000) {};
		    \node[vertex] (4) at (0.000, -0.433) {};
		    \node[vertex] (5) at (0.500, -0.866) {};
		    \node[vertex] (6) at (-0.500, -0.866) {};
		    \node[vertex] (7) at (0.000, 0.000) {};
		    \draw[edge] (2)edge(6) (1)edge(7)  ;
		    \draw[edge] (0)edge(7) (3)edge(5)  ;
		    \draw[edge] (5)edge(7) (5)edge(6) (6)edge(7) (4)edge(6) (4)edge(7) (1)edge(2) (0)edge(3) (4)edge(5)  ;
		    \draw[edge] (1)edge(3) (0)edge(2)  ;
		\end{tikzpicture}		
		&
		\begin{tikzpicture}[scale=1*0.8]
		    \node[vertex] (0) at (0.500, -0.866) {};
		    \node[vertex] (1) at (-0.500, -0.866) {};
		    \node[vertex] (2) at (0.500, 0.866) {};
		    \node[vertex] (3) at (-0.500, 0.866) {};
		    \node[vertex] (4) at (-1.00, 0.000) {};
		    \node[vertex] (5) at (1.00, 0.000) {};
		    \node[vertex] (6) at (0.300, -0.100) {};
		    \node[vertex] (7) at (-0.300, -0.100) {};
		    \draw[edge] (0)edge(7) (1)edge(4)  ;
		    \draw[edge] (0)edge(5) (1)edge(6)  ;
		    \draw[edge] (3)edge(4) (3)edge(7) (4)edge(7) (5)edge(6) (2)edge(6) (2)edge(5) (0)edge(1)  ;
		    \draw[edge] (2)edge(4) (6)edge(7) (3)edge(5)  ;
		\end{tikzpicture}
		&
		\begin{tikzpicture}[scale=1*0.8]
		    \node[vertex] (0) at (-0.588, -0.809) {};
		    \node[vertex] (1) at (0.588, -0.809) {};
		    \node[vertex] (2) at (0.951, 0.309) {};
		    \node[vertex] (3) at (-0.951, 0.309) {};
		    \node[vertex] (4) at (0.235, 0.324) {};
		    \node[vertex] (5) at (-0.235, 0.324) {};
		    \node[vertex] (6) at (0.000, -0.400) {};
		    \node[vertex] (7) at (0.000, 1.00) {};
		    \draw[edge] (1)edge(2) (0)edge(3)  ;
		    \draw[edge] (0)edge(6) (1)edge(6) (0)edge(1)  ;
		    \draw[edge] (2)edge(4) (5)edge(6) (4)edge(7) (2)edge(7)  ;
		    \draw[edge] (5)edge(7) (3)edge(5) (3)edge(7) (4)edge(6)  ;
		\end{tikzpicture}
		&
		\begin{tikzpicture}[scale=1*0.8]
		    \node[vertex] (0) at (0.000, -0.433) {};
		    \node[vertex] (1) at (-0.500, -0.866) {};
		    \node[vertex] (2) at (0.500, -0.866) {};
		    \node[vertex] (3) at (1.00, 0.000) {};
		    \node[vertex] (4) at (-1.00, 0.000) {};
		    \node[vertex] (5) at (0.000, 0.000) {};
		    \node[vertex] (6) at (0.500, 0.866) {};
		    \node[vertex] (7) at (-0.500, 0.866) {};
		    \draw[edge] (3)edge(7) (4)edge(6)  ;
		    \draw[edge] (0)edge(5) (1)edge(7) (4)edge(7) (3)edge(6) (2)edge(3) (2)edge(6) (1)edge(4)  ;
		    \draw[edge] (5)edge(6) (0)edge(2)  ;
		    \draw[edge] (5)edge(7) (0)edge(1)  ;
		\end{tikzpicture}
		\\
			$Q_1$ & $Q_2$ &$Q_3$ &$Q_4$ &$Q_5$ &$Q_6$ 
	\end{tabular}
	\begin{tabular}{ccccc}
			\begin{tikzpicture}[scale=0.761]
			\node[vertex] (5) at (0.5, -0.866025) {};
			\node[vertex] (4) at (1., 0.) {};
			\node[vertex] (2) at (0.5, 0.866025) {};
			\node[vertex] (1) at (-0.5, 0.866025) {};
			\node[vertex] (7) at (-1.,  0.) {};
			\node[vertex] (6) at (-0.5, -0.866025) {};
			\node[vertex] (3) at (0.8,-1.4) {};
			\node[vertex] (0) at (-0.8,-1.4) {};
			\draw[edge] (7)edge(0) (3)edge(4);
			\draw[edge]  (6)edge(5) (5)edge(4) (7)edge(4) (7)edge(6) (0)edge(3);
			\draw[edge] (2)edge(4) (2)edge(6);
			\draw[edge] (1)edge(5) (7)edge(1) ;
			\draw[edge] (2)edge(1);
			\draw[edge] (0)edge(6)  (5)edge(3) ;
		\end{tikzpicture}
		&
		\begin{tikzpicture}[scale=0.761]
			\node[vertex] (1) at (-0.8,-1.4) {};
			\node[vertex] (5) at (1., 0.) {};
			\node[vertex] (3) at (-1.,  0.) {};
			\node[vertex] (6) at (-0.5, 0.866025) {};
			\node[vertex] (2) at (0.5, 0.866025) {};
			\node[vertex] (4) at (-0.5, -0.866025) {};
			\node[vertex] (7) at (0.5, -0.866025) {};
			\node[vertex] (0) at (0.8,-1.4) {};
			\draw[edge] (5)edge(3) (2)edge(6) (4)edge(0);
			\draw[edge]  (2)edge(4) (0)edge(1) (5)edge(7);
			\draw[edge] (2)edge(5) (7)edge(4) (3)edge(6);
			\draw[edge] (1)edge(6) (7)edge(6) (3)edge(4) (1)edge(7) (5)edge(0);
		\end{tikzpicture}
		&
		\begin{tikzpicture}[scale=0.761]
			\node[vertex] (1) at (-0.8,-1.4) {};
			\node[vertex] (7) at (1., 0.) {};
			\node[vertex] (3) at (-1.,  0.) {};
			\node[vertex] (6) at (-0.5, -0.866025) {};
			\node[vertex] (2) at (0.5, -0.866025) {};
			\node[vertex] (5) at (-0.5, 0.866025) {};
			\node[vertex] (4) at (0.5, 0.866025) {};
			\node[vertex] (0) at (0.8,-1.4) {};
			\draw[edge] (7)edge(3) (2)edge(6) (5)edge(1);
			\draw[edge]  (2)edge(5) (7)edge(4) (0)edge(6);
			\draw[edge] (2)edge(7) (4)edge(5) (3)edge(6);
			\draw[edge] (4)edge(6) (3)edge(5) (7)edge(1) (0)edge(7);
			\draw[edge] (0)edge(1);
		\end{tikzpicture}
		&
		\begin{tikzpicture}[scale=1]
			\node[vertex] (a1) at (0.5, -0.866025) {};
			\node[vertex] (a2) at (1., 0.) {};
			\node[vertex] (a3) at (0.5, 0.866025) {};
			\node[vertex] (a4) at (-0.5, 0.866025) {};
			\node[vertex] (a5) at (-1.,  0.) {};
			\node[vertex] (a6) at (-0.5, -0.866025) {};
			\pgfmathsetmacro\hundredFiftycos{cos(150)};
			\pgfmathsetmacro\hundredFiftysin{sin(150)};
			\node[vertex] (a7) at (0.6*\hundredFiftycos, 0.6*\hundredFiftysin) {};
			\node[vertex] (a8) at (1.1*\hundredFiftycos, 1.1*\hundredFiftysin) {};
			\draw[edge] (a1)edge(a4) (a2)edge(a3) (a3)edge(a6) (a6)edge(a5) (a2)edge(a5);
			\draw[edge] (a5)edge(a4) (a6)edge(a1) (a3)edge(a4) (a2)edge(a1);
			\draw[edge] (a5)edge(a7) (a5)edge(a8) (a7)edge(a4) (a7)edge(a8) (a4)edge(a8);
		\end{tikzpicture}

		&
		\begin{tikzpicture}[scale=1.146]
		    \node[vertex] (0) at (0.000, 0.700) {};
		    \node[vertex] (1) at (-0.588, -0.809) {};
		    \node[vertex] (2) at (0.588, -0.809) {};
		    \node[vertex] (3) at (0.588, 0.309) {};
		    \node[vertex] (4) at (-0.588, 0.309) {};
		    \node[vertex] (5) at (-0.235, 0) {};
		    \node[vertex] (6) at (0.000, -0.400) {};
		    \node[vertex] (7) at (0.235, 0) {};
		    \draw[edge] (0)edge(3) (0)edge(4) (0)edge(5) (1)edge(2) (1)edge(4) (1)edge(6) (2)edge(3) (2)edge(6) (3)edge(7) (4)edge(7) (5)edge(6) (5)edge(7) (6)edge(7)  ;
		\end{tikzpicture} \\
		$S_1$ & $S_2$ & $S_3$ & $S_4$ & $S_5$
	\end{tabular}
	\caption{Maximal non-bipartite constant distance closures of graphs with a spanning Laman subgraph, at most 8 vertices and no vertex of degree two.}
	\label{fig:constDistClosures}
	\end{center}
\end{figure}

\section{Construction of proper flexible labelings}
\label{sec:constructions}
The goal of this section is to prove that all graphs listed in Theorem~\ref{thm:listEightVertices} are actually movable.
By this we also show that the necessary condition of movability (the constant distance closure is non-complete) is 
also sufficient for graphs up to eight vertices.
Four general ways of constructing a proper flexible labeling are presented.
The first two are known --- the Dixon~I construction for bipartite graphs \cite{Dixon}
and the construction from a single NAC-coloring presented in~\cite{flexibleLabelings}.
We describe a new construction that produces an algebraic motion with two active NAC-colorings
based on a certain injective embedding of vertices in $\RR^3$.
The fourth method assumes two movable subgraphs
whose union spans the whole graph and whose motions coincide on the intersection.
We provide a proper flexible labeling for $S_5$ ad~hoc, since none of the four methods applies.
Animations for the movable graphs can be found in \cite{animations}.
In the conclusion, we give an example showing that the necessary condition 
is not sufficient for graphs with arbitrary number of vertices.
In order to be self-contained, we recall Dixon's construction.
\begin{lem}	\label{lem:bipartiteGraphs}
	Every bipartite graph with at least three vertices is movable. 
\end{lem}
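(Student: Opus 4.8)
The plan is to exhibit an explicit proper flexible labeling via the classical Dixon~I construction. Let $G$ be a bipartite graph with parts $A$ and $B$, where $A\cup B=V_G$ and every edge joins a vertex of $A$ to a vertex of $B$. The idea is to place the vertices of $A$ on the $x$-axis and the vertices of $B$ on the $y$-axis, and then rigidly slide these along perpendicular directions, which preserves all edge lengths.

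Concretely, first I would fix arbitrary distinct real numbers $a_u$ for $u\in A$ and $b_w$ for $w\in B$, setting initial positions $\rho(u)=(a_u,0)$ for $u\in A$ and $\rho(w)=(0,b_w)$ for $w\in B$. For a parameter $t\in\RR$ I would define
\begin{align*}
	\rho_t(u)&=(a_u,\,t) \quad \text{for } u\in A\,, \\
	\rho_t(w)&=(t,\,b_w) \quad \text{for } w\in B\,.
\end{align*}
Then for an edge $uw$ with $u\in A$, $w\in B$ one computes $\norm{\rho_t(u)-\rho_t(w)}^2=(a_u-t)^2+(t-b_w)^2$, which in general depends on $t$. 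So the naive sliding does not directly work; the correct Dixon~I motion instead translates the two groups along the \emph{same} axis on which they do not lie. The fix is to set $\rho_t(u)=(a_u,\,y(t))$ and $\rho_t(w)=(x(t),\,b_w)$ and observe the squared edge length $(a_u-x(t))^2+(y(t)-b_w)^2$; choosing $x(t)=c$ and $y(t)=c'$ constant is trivial, so one uses the genuine Dixon parametrization where the $A$-vertices move vertically and $B$-vertices move horizontally by equal amounts, giving length $a_u^2+b_w^2+\text{(terms in the shift)}$. I would therefore take $\rho_t(u)=(a_u,\,t)$ for $u\in A$ and $\rho_t(w)=(-t,\,b_w)$ for $w\in B$, so that $\norm{\rho_t(u)-\rho_t(w)}^2=(a_u+t)^2+(t-b_w)^2$; after expanding, I would choose the $a_u,b_w$ and the translation directions so that the cross terms cancel, leaving a labeling $\lambda(uw)=\sqrt{a_u^2+b_w^2}$ independent of $t$.

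The key verification is then twofold. First, I would confirm that the edge lengths are genuinely constant in $t$: with the correct choice (the two parts translate by $t$ in opposite senses along the axis orthogonal to the one they occupy), the squared distance reduces exactly to $a_u^2+b_w^2$, so $\lambda(uw)=\sqrt{a_u^2+b_w^2}>0$ is a well-defined edge labeling and $\{\rho_t\}_{t\in\RR}$ is an infinite family of compatible realizations. Second, I would check injectivity: two vertices in the same part have distinct fixed coordinates ($a_u\neq a_{u'}$ or $b_w\neq b_{w'}$) by the choice of distinct parameters, and a vertex of $A$ and a vertex of $B$ coincide only for isolated values of $t$, so for all but finitely many $t$ the realization $\rho_t$ is injective. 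This yields infinitely many injective realizations.

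Finally I would argue these realizations remain distinct modulo congruence, so that the labeling is flexible in the sense required: since the relative configuration genuinely changes with $t$ (the shape is not merely translated or rotated), infinitely many of the $\rho_t$ lie in distinct congruence classes, giving a proper flexible labeling and hence movability. The main obstacle I anticipate is getting the parametrization exactly right so that every edge length is simultaneously constant --- the cancellation of the $t$-dependent terms is what forces the specific Dixon placement on perpendicular axes, and the hypothesis of at least three vertices is needed to guarantee that at least one part has two vertices (ruling out the single-edge case $K_{1,1}$, whose only labeling is rigid) so that the motion is nondegenerate and the realizations are genuinely non-congruent.
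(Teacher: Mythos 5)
Your overall strategy --- Dixon~I, with the two parts on perpendicular axes and the target labeling $\lambda(uw)=\sqrt{a_u^2+b_w^2}$ --- is the same as the paper's, but there is a genuine gap: you never produce a motion that actually preserves the edge lengths. Every parametrization you write down translates each part rigidly (all of $A$ acquires the same offset $t$, all of $B$ the same offset $-t$), and for such a motion the squared edge length $(a_u+t)^2+(t-b_w)^2=a_u^2+b_w^2+2t(a_u-b_w)+2t^2$ contains a $2t^2$ term that no choice of the $a_u$, $b_w$ or of the translation directions can cancel; more conceptually, a rigid translation of part $A$ relative to part $B$ cannot preserve all the $A$--$B$ distances unless the relative displacement is zero. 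So the hoped-for ``cancellation of the cross terms'' never happens, and the step where the squared distance ``reduces exactly to $a_u^2+b_w^2$'' is unjustified.

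The correct Dixon~I motion is not a translation: the vertices of $A$ stay \emph{on} the $x$-axis and those of $B$ stay \emph{on} the $y$-axis throughout, and each vertex slides along its own axis by an amount depending on its initial position, namely $\rho_t(u)=(\sqrt{a_u^2-t^2},\,0)$ for $u\in A$ and $\rho_t(w)=(0,\,\sqrt{b_w^2+t^2})$ for $w\in B$, for $|t|$ sufficiently small. Then $\norm{\rho_t(u)-\rho_t(w)}^2=(a_u^2-t^2)+(b_w^2+t^2)=a_u^2+b_w^2$ exactly, so all edge lengths are constant in $t$, and injectivity for $t\neq 0$ follows by choosing the $|a_u|$ and $|b_w|$ nonzero and pairwise distinct within each part. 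Your remaining points --- that the realizations are pairwise non-congruent because the shape genuinely changes (e.g.\ the distance between two vertices of the same part varies with $t$), and that the three-vertex hypothesis excludes only $K_{1,1}$ --- are fine once this motion is in place.
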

\begin{proof}
	Let $(X,Y)$ be a bipartite partition of a graph $G$.
	Hence, a realization with the vertices of one partition set on the $x$-axis,
	and vertices of the other on the $y$-axis induces a proper flexible labeling by Dixon's construction~\cite{Dixon,Stachel}:
	\begin{equation*}
		\rho_t(v) = \begin{cases}	(\sqrt{x_v^2 - t^2},0)\,, &\text{ if } v \in X\,, \\
									(0,\sqrt{y_v^2 + t^2})\,, &\text{ if } v \in Y\,, \\
					\end{cases}
	\end{equation*}
	where $x_v,y_v$ are arbitrary nonzero real numbers.
	Let $\lambda(uv):=\sqrt{x_v^2 + y_v^2}$ for all $u\in X$ and $v\in Y$.
	By the Pythagorean Theorem, $\rho_t$ is compatible with $\lambda$ for every sufficiently small $t$.
\end{proof}

The following method from \cite{flexibleLabelings} was used in the proof of Theorem~\ref{thm:nacflexible}
but without the assumption guaranteeing injectivity of realizations.
\begin{lem}\label{lem:oneNac}
	Let $\delta$ be a NAC-coloring of a graph $G$.
	Let $R_1, \dots, R_m$, resp.\ $B_1, \dots, B_n$, be the sets of vertices of connected components 
	of the graph obtained from $G$ by keeping only \red{}, resp.\ \blue{}, edges.
	If $|R_i \cap B_j|\,\leq 1$ for all $i,j$, then $G$ is movable.
\end{lem}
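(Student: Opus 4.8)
The plan is to exhibit an explicit one-parameter family of realizations using the NAC-coloring, exactly as in the construction from~\cite{flexibleLabelings} recalled after Theorem~\ref{thm:nacflexible}, and then to use the hypothesis $|R_i\cap B_j|\leq 1$ to guarantee that almost all members of the family are injective. Since every vertex lies in exactly one \red{} component and exactly one \blue{} component, I can write $v\in R_{i(v)}\cap B_{j(v)}$; the hypothesis says precisely that the pair of indices $(i(v),j(v))$ determines $v$ uniquely. Identifying $\RR^2$ with $\CC$ via $(x,y)\mapsto x+\ci y$, I would choose pairwise distinct complex numbers $\alpha_1,\dots,\alpha_m$ (one per \red{} component) and pairwise distinct $\beta_1,\dots,\beta_n$ (one per \blue{} component), and set
\begin{equation*}
	\rho_t(v)=\alpha_{i(v)}+e^{\ci t}\,\beta_{j(v)}\,,\qquad t\in\RR\,.
\end{equation*}

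First I would check that this is a flexible labeling. If $uv$ is a \red{} edge then $i(u)=i(v)$, so $\rho_t(u)-\rho_t(v)=e^{\ci t}\bigl(\beta_{j(u)}-\beta_{j(v)}\bigr)$ has constant modulus; if $uv$ is \blue{} then $j(u)=j(v)$ and $\rho_t(u)-\rho_t(v)=\alpha_{i(u)}-\alpha_{i(v)}$ is constant. Hence defining $\lambda(uv)=\norm{\rho_0(u)-\rho_0(v)}$ makes every $\rho_t$ compatible with $\lambda$. Positivity of $\lambda$ already uses the hypothesis: for a \red{} edge $uv$ we have $i(u)=i(v)$ with $u\neq v$, so $j(u)\neq j(v)$ (otherwise $u,v\in R_{i(u)}\cap B_{j(u)}$ forces $u=v$), and distinctness of the $\beta_j$ gives $\lambda(uv)>0$; symmetrically for \blue{} edges. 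For flexibility I would fix one \red{} edge $e$ and one \blue{} edge $f$, which exist because a NAC-coloring is surjective. The signed angle in $\rho_t$ from the segment $f$ (direction constant in $t$) to the segment $e$ (direction $t+\text{const}$) equals $t$ up to an additive constant. As congruence is taken with respect to \emph{direct} Euclidean isometries, this signed angle is a congruence invariant, so $\rho_{t_1}\cong\rho_{t_2}$ forces $t_1\equiv t_2\pmod{2\pi}$; thus the family contains infinitely many pairwise non-congruent realizations.

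The heart of the argument is injectivity, and this is where the hypothesis is used in full. Take $u\neq v$. Then $i(u)\neq i(v)$ or $j(u)\neq j(v)$, since equality of both indices would again force $u=v$. If $j(u)=j(v)$ (so $i(u)\neq i(v)$) then $\rho_t(u)-\rho_t(v)=\alpha_{i(u)}-\alpha_{i(v)}\neq 0$ for every $t$; if $i(u)=i(v)$ (so $j(u)\neq j(v)$) then $\rho_t(u)-\rho_t(v)=e^{\ci t}(\beta_{j(u)}-\beta_{j(v)})\neq 0$ for every $t$. The only remaining case is $i(u)\neq i(v)$ and $j(u)\neq j(v)$, where $\rho_t(u)=\rho_t(v)$ is equivalent to the single equation $e^{\ci t}=(\alpha_{i(v)}-\alpha_{i(u)})/(\beta_{j(u)}-\beta_{j(v)})$, which has at most finitely many solutions $t$ in $[0,2\pi)$. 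Ranging over the finitely many vertex pairs, only finitely many parameters $t\in[0,2\pi)$ are excluded, so $\rho_t$ is injective for all but finitely many $t$. Combined with the non-congruence established above, this produces infinitely many injective, pairwise non-congruent realizations, proving that $\lambda$ is a proper flexible labeling and hence that $G$ is movable.

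I do not expect a serious obstacle here, since the construction is the known one; the only point requiring care is the bookkeeping of injectivity, namely isolating the case in which both component indices differ and observing that it rules out only finitely many parameter values. The one genuinely non-routine verification is the non-congruence (flexibility) claim, which I would settle by the short signed-angle argument above rather than by appealing to Theorem~\ref{thm:nacflexible}, whose flexible labeling need not coincide with the $\lambda$ constructed here.
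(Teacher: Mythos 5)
Your proof is correct and follows essentially the same route as the paper: the paper defines $\rho_\alpha(v)=i\cdot(1,0)+j\cdot(\cos\alpha,\sin\alpha)$ for $v\in R_i\cap B_j$, which is your construction with the specific real choices $\alpha_i=i$, $\beta_j=j$ (these make $\rho_\alpha$ injective for every $\alpha\notin\{0,\pi\}$, rather than merely for all but finitely many parameters). Your added verifications of edge-length positivity and of pairwise non-congruence via the signed angle between a \red{} and a \blue{} edge are details the paper leaves implicit, and they are sound.
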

\begin{proof}
	For $\alpha\in [0,2\pi)$, we define a realization $\rho_\alpha\colon V_G\rightarrow \RR^2$ by 
	\begin{equation*}
	  \rho_\alpha(v)=i\cdot (1,0) + j\cdot (\cos\alpha, \sin\alpha)\,,
	\end{equation*}
	where $i$ and $j$ are such that~$v\in R_i \cap B_j$.
	Now, the realization $\rho_\alpha$ is compatible 
	with the labeling $\lambda: E_G\rightarrow \RR_+$, given by $\lambda(uv)=\norm{\rho_{\pi/2}(u)-\rho_{\pi/2}(v)}$,
	for every $\alpha\in [0,2\pi)$.
	The induced flexible labeling $\lambda$ is proper, 
	since all realizations $\rho_\alpha$, $\alpha\notin\{0,\pi\}$, are injective by the assumption $|R_i \cap B_j|\,\leq 1$.
\end{proof}

The construction yields proper flexible labelings for $L_1, \dots, L_6$,
since there are NAC-colorings satisfying the assumption, see Figure~\ref{fig:graphsOneNAC}.
The displayed proper flexible labelings can be obtained
by the more general ``zikzag" construction from \cite{flexibleLabelings}.
\begin{cor}
	\label{cor:oneNacGraphs}
	The graphs $L_1, \dots, L_6$, see Figure~\ref{fig:constDistClosures} or~\ref{fig:graphsOneNAC}, are movable.
\end{cor}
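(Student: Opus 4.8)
The plan is to invoke Lemma~\ref{lem:oneNac} directly: for each graph $L_1,\dots,L_6$ it suffices to exhibit a single NAC-coloring whose \red{} and \blue{} connected components $R_1,\dots,R_m$ and $B_1,\dots,B_n$ satisfy $|R_i\cap B_j|\le 1$ for all $i,j$. The candidate colorings are exactly those displayed in Figure~\ref{fig:graphsOneNAC}, so the argument reduces to a finite verification carried out on the six concrete graphs, after which the lemma yields movability for each of them at once.

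For each graph I would perform two checks. First, that the proposed edge $2$-coloring is genuinely a NAC-coloring: surjectivity is clear by inspection, and one confirms that no cycle is almost \red{} or almost \blue{}. In practice it is enough to note that every triangle must be unicolor (a $3$-cycle cannot carry two edges of each color, hence it is monochromatic) and then to check the few remaining short cycles individually; since each $L_i$ has at most eight vertices, this is a small case analysis. Second, that the intersection bound $|R_i\cap B_j|\le 1$ holds. The most transparent way to state this check is through the map $v\mapsto (i,j)$ with $v\in R_i\cap B_j$ from the proof of Lemma~\ref{lem:oneNac}: the bound is equivalent to injectivity of this map, i.e.\ to the statement that no two distinct vertices lie simultaneously in a common \red{} component and a common \blue{} component, or equivalently that no two vertices are joined both by a \red{} path and by a \blue{} path.

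The main obstacle is not the verification but producing a coloring that passes both checks simultaneously. Forcing every triangle to be monochromatic already glues several vertices into a common \red{} (or \blue{}) component, so the colors of the remaining edges must be chosen precisely so that these forced components are separated by the complementary color, keeping each pairwise intersection at most a single vertex. For the sparser graphs $L_1,\dots,L_4$ this is immediate from the picture, whereas for the denser $L_5$ and $L_6$ the constraint is tight; confirming that the particular colorings of Figure~\ref{fig:graphsOneNAC} keep all \red{}--\blue{} component intersections of size at most one is the crux. Once both checks succeed for all six graphs, Lemma~\ref{lem:oneNac} gives a proper flexible labeling in each case, completing the proof.
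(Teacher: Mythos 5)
Your proposal matches the paper's argument exactly: the paper also proves Corollary~\ref{cor:oneNacGraphs} by exhibiting, for each of $L_1,\dots,L_6$, the NAC-coloring shown in Figure~\ref{fig:graphsOneNAC} and verifying that it satisfies the hypothesis $|R_i\cap B_j|\le 1$ of Lemma~\ref{lem:oneNac}. The two checks you describe (that the displayed coloring is a NAC-coloring and that no two vertices are joined by both a \red{} and a \blue{} path) are precisely the finite verification the paper relies on, so your approach is correct and essentially identical.
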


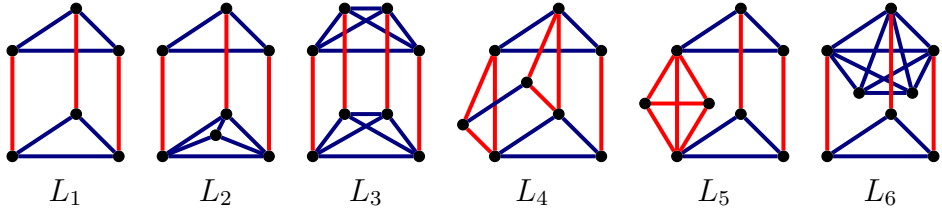
\begin{figure}[htb]
	\centering
	\begin{tabular}{cccccc}
	\begin{tikzpicture}[scale=1.4]
	    \node[vertex] (0) at (0.60, 0.40) {};
	    \node[vertex] (1) at (1, 0) {};
	    \node[vertex] (2) at (1, 1) {};
	    \node[vertex] (3) at (0.60, 1.4) {};
	    \node[vertex] (6) at (0, 1) {};
	    \node[vertex] (7) at (0, 0) {};
	    \draw[bedge] (0)edge(1) (0)edge(7) (1)edge(7) (2)edge(3) (2)edge(6) (3)edge(6)  ;
	    \draw[redge] (0)edge(3) (1)edge(2)  (6)edge(7)  ;
	\end{tikzpicture}
	&
	\begin{tikzpicture}[scale=1.4]
	    \node[vertex] (0) at (0.60, 0.40) {};
	    \node[vertex] (1) at (1, 0) {};
	    \node[vertex] (2) at (1, 1) {};
	    \node[vertex] (3) at (0.60, 1.4) {};
	    \node[vertex] (4) at (0.50, 0.2) {};
	    \node[vertex] (6) at (0, 1) {};
	    \node[vertex] (7) at (0, 0) {};
	    \draw[bedge] (0)edge(1) (0)edge(7) (1)edge(7) (2)edge(3) (2)edge(6) (3)edge(6) (4)edge(0) (4)edge(1) (4)edge(7)  ;
	    \draw[redge] (0)edge(3) (1)edge(2)  (6)edge(7)  ;
	\end{tikzpicture}
	&
	\begin{tikzpicture}[scale=1.4]
	    \node[vertex] (0) at (0.70, 0.40) {};
	    \node[vertex] (1) at (1, 1) {};
	    \node[vertex] (2) at (0.30, 1.4) {};
	    \node[vertex] (3) at (0, 1) {};
	    \node[vertex] (4) at (0, 0) {};
	    \node[vertex] (5) at (0.30, 0.40) {};
	    \node[vertex] (6) at (1, 0) {};
	    \node[vertex] (7) at (0.70, 1.4) {};
	    \draw[bedge] (0)edge(4) (0)edge(5) (0)edge(6) (1)edge(2) (1)edge(3) (1)edge(7) (2)edge(3) (2)edge(7) (3)edge(7) (4)edge(5) (4)edge(6) (5)edge(6)  ;
	    \draw[redge] (0)edge(7) (1)edge(6) (2)edge(5) (3)edge(4)  ;
	\end{tikzpicture}
	&
	\begin{tikzpicture}[scale=1.4]
	    \node[vertex] (0) at (1, 0) {};
	    \node[vertex] (1) at (-0.30, 0.30) {};
	    \node[vertex] (2) at (0.30, 0.70) {};
	    \node[vertex] (3) at (1, 1) {};
	    \node[vertex] (4) at (0.60, 1.4) {};
	    \node[vertex] (5) at (0, 1) {};
	    \node[vertex] (6) at (0.60, 0.40) {};
	    \node[vertex] (7) at (0, 0) {};
	    \draw[bedge] (0)edge(6) (0)edge(7) (3)edge(4) (3)edge(5) (4)edge(5) (6)edge(7)  ;
	    \draw[redge] (0)edge(3) (1)edge(5) (1)edge(7) (2)edge(4) (2)edge(6) (4)edge(6) (5)edge(7)  ;
	    \draw[bedge] (1)edge(2);
	\end{tikzpicture}
	&
	\begin{tikzpicture}[scale=1.4]
	    \node[vertex] (0) at (0.60, 0.40) {};
	    \node[vertex] (1) at (1, 0) {};
	    \node[vertex] (2) at (1, 1) {};
	    \node[vertex] (3) at (0.60, 1.4) {};
	    \node[vertex] (4) at (0.30, 0.50) {};
	    \node[vertex] (5) at (-0.30, 0.50) {};
	    \node[vertex] (6) at (0, 1) {};
	    \node[vertex] (7) at (0, 0) {};
	    \draw[bedge] (0)edge(1) (0)edge(7) (1)edge(7) (2)edge(3) (2)edge(6) (3)edge(6)  ;
	    \draw[redge] (0)edge(3) (1)edge(2) (4)edge(5) (4)edge(6) (4)edge(7) (5)edge(6) (5)edge(7) (6)edge(7)  ;
	\end{tikzpicture}
	&
	\begin{tikzpicture}[scale=1.4]
	    \node[vertex] (0) at (0.60, 0.40) {};
	    \node[vertex] (1) at (0, 0) {};
	    \node[vertex] (2) at (1, 0) {};
	    \node[vertex] (3) at (0.80, 0.60) {};
	    \node[vertex] (4) at (0.30, 0.60) {};
	    \node[vertex] (5) at (1, 1) {};
	    \node[vertex] (6) at (0, 1) {};
	    \node[vertex] (7) at (0.60, 1.4) {};
	    \draw[bedge] (0)edge(1) (0)edge(2) (1)edge(2) (3)edge(4) (3)edge(5) (3)edge(6) (3)edge(7) (4)edge(5) (4)edge(6) (4)edge(7) (5)edge(6) (5)edge(7) (6)edge(7)  ;
	    \draw[redge] (0)edge(7) (1)edge(6) (2)edge(5)  ;
	\end{tikzpicture}
	\\
	$L_1$ & $L_2$ &$L_3$ &$L_4$ &$L_5$ &$L_6$ 	
	\end{tabular}
	\caption{The NAC-colorings inducing a proper flexible labeling.}
	\label{fig:graphsOneNAC}
\end{figure}

Now, we present a construction assuming a special injective embedding in $\RR^3$.
The lemma also gives a hint, how existence of such an embedding can be checked (and an embedding found), 
if we know all NAC-colorings of the given graph.
\begin{lem}\label{lem:twoNacs}
Let $G=(V,E)$ be a graph with an injective embedding $\omega:V\rightarrow\RR^3$ such that for every edge 
$uv\in E$, the vector $\omega(u)-\omega(v)$ is parallel to one of the four vectors $(1,0,0)$, $(0,1,0)$, $(0,0,1)$, $(-1,-1,-1)$,
and all four directions are present.
Then~$G$ is movable.

Moreover, there exists an algebraic motion of $G$ with exactly two active NAC-colorings 
modulo conjugation. Two edges are parallel in the embedding~$\omega$ if and only if they
receive the same pair of colors in the two active NAC-colorings.
\end{lem}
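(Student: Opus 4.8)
The plan is to realize the motion of $G$ as the image of the fixed embedding $\omega$ under a one-parameter family of linear maps $P_t\colon\RR^3\to\RR^2$ preserving the length of every edge, and to recognize this family as the flex of a planar four-bar linkage built from the four prescribed directions. Write $d_1=(1,0,0)$, $d_2=(0,1,0)$, $d_3=(0,0,1)$, $d_4=(-1,-1,-1)$, so that $d_1+d_2+d_3+d_4=0$. A linear map $P_t$ is determined by the three vectors $z_i(t):=P_t(d_i)\in\RR^2\cong\CC$ for $i\in\{1,2,3\}$, and then $z_4(t)=P_t(d_4)=-(z_1+z_2+z_3)$ automatically, so $z_1+z_2+z_3+z_4=0$. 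Requiring each $\norm{z_i(t)}$ to be a constant $\ell_i$ means exactly that $z_1(t),\dots,z_4(t)$ are the side vectors of a closed, flexing quadrilateral, i.e.\ a four-bar linkage. I would choose this four-bar to be a nondegenerate \emph{kite}, namely $\ell_1=\ell_4$ and $\ell_2=\ell_3$ with $\ell_1\neq\ell_2$ (as in Example~\ref{ex:deltoid}), obtaining a genuine one-parameter real flex $t\mapsto P_t$.

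Setting $\rho_t:=P_t\circ\omega$, every edge $uv$ with $\omega(u)-\omega(v)=s\,d_i$ satisfies $\norm{\rho_t(u)-\rho_t(v)}=|s|\,\ell_i$, independent of $t$; hence all $\rho_t$ are compatible with one labeling $\lambda$, which is therefore flexible. For injectivity, note that $\rho_t(u)=\rho_t(v)$ forces $\omega(u)-\omega(v)\in\ker P_t$; since $\omega$ is injective and the line $\ker P_t$ is non-constant in $t$, each of the finitely many non-edges is collapsed for only finitely many $t$, so all but finitely many $\rho_t$ are injective. This already proves $G$ movable.

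For the \emph{moreover} part, I would first observe that for an edge $e$ in direction $d_i$ the function $W_e$ equals $z_i$ up to a nonzero constant factor, so $\nu(W_e)$ depends only on $i$ for every valuation $\nu$; by Theorem~\ref{thm:valuationGivesNAC} an active NAC-coloring is then just a threshold split of $\{z_1,z_2,z_3,z_4\}$ by valuation (and by Lemma~\ref{lem:activeNACindependentOnFixedEdge} I may compute without fixing an edge). The relations $z_1+z_2+z_3+z_4=0$ and $\sum_i \ell_i^2/z_i=0$ (the latter from $z_i\,\overline{z_i}=\ell_i^2$) force, at any place where the $\nu(z_i)$ are not all equal, both the minimum and the maximum of the $\nu(z_i)$ to be attained at least twice; a short case analysis then shows the four values split into two values each attained twice, giving one of the three pair-partitions of $\{1,2,3,4\}$. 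To see which partitions actually occur I would view the configuration curve projectively in $\mathbb{P}^3$ with coordinates $[z_1:z_2:z_3:z_4]$: it is the intersection of the hyperplane $\sum z_i=0$ with the cubic $\sum_i \ell_i^2\prod_{j\neq i} z_j=0$, hence a plane cubic. The kite condition $\ell_1=\ell_4,\ \ell_2=\ell_3$ makes this cubic reducible, splitting off the line $\{z_2+z_3=0\}$ --- precisely the degenerate motion in which the two wing vertices coincide --- and, since $\ell_1\neq\ell_2$ admits no further splitting, leaving an irreducible conic, which is the genuine (injective) flex $\C$. The places of $\C$ with non-constant valuation pattern are exactly the points where $\C$ meets a coordinate hyperplane $\{z_i=0\}$; computing these intersections shows $\C$ passes through the four coordinate points realizing the partitions $\{1,2\}\mid\{3,4\}$ and $\{1,3\}\mid\{2,4\}$, and misses the two points of $\{1,4\}\mid\{2,3\}$ (which lie on the discarded line). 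Thus exactly two pair-partitions are active; with Lemma~\ref{lem:conjugatedActiveNACs} this gives exactly two active NAC-colorings modulo conjugation.

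The color-pair statement then follows formally: an edge is colored according to the direction it carries, so parallel edges always receive identical pairs of colors, while the two active partitions $\{1,2\}\mid\{3,4\}$ and $\{1,3\}\mid\{2,4\}$, being distinct pair-partitions of a four-element set, separate all four directions, so edges in different directions differ in at least one of the two colorings. I expect the main obstacle to be the step isolating \emph{exactly two} active colorings: one must identify the genuine injective flex with the rational conic component and verify that it avoids precisely the partition attached to the degenerate vertex-identifying line, rather than the generic situation in which the configuration cubic is an irreducible elliptic curve meeting all three partition points and producing three active colorings.
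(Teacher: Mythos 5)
Your proposal is correct, and its first half is essentially the paper's argument: the paper likewise defines $\widetilde{\rho_t}(u)=\omega_1(u)f_1(t)+\omega_2(u)f_2(t)+\omega_3(u)f_3(t)$ with $f_1,f_2,f_3$ the side vectors of a moving quadrilateral, so that every edge length is constant because each edge direction maps to one of $f_1,f_2,f_3,-f_1-f_2-f_3$; your $P_t$ is exactly this linear map, and your kite $\ell_1=\ell_4$, $\ell_2=\ell_3$, $\ell_1\neq\ell_2$ is the deltoid of Example~\ref{ex:deltoid}. Two differences are worth recording. First, your injectivity argument for non-adjacent vertices (a pair collapses only when $\omega(u)-\omega(v)\in\ker P_t$, which happens for finitely many $t$ since $1,z_2,z_3$ are linearly independent rational functions) is more complete than the paper's proof, which only checks that adjacent vertices stay apart. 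Second, and more substantially, for the ``moreover'' part the paper simply observes that each $W_{u,v}$ is a scalar multiple of one of the four explicit functions computed in Example~\ref{ex:deltoid} and reads off the two non-conjugated active NAC-colorings from Table~\ref{tab:deltoid}; you instead argue structurally, realizing the configuration space of the four-bar as the intersection in $\mathbb{P}^3$ of the hyperplane $\sum z_i=0$ with the cubic $\sum_i\ell_i^2\prod_{j\neq i}z_j=0$, showing the kite condition splits off the degenerate line $\{z_2+z_3=0\}$ and leaves a smooth conic (smoothness is exactly $\ell_1\neq\ell_2$), and locating the active pair-partitions at the coordinate points the conic passes through. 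I checked this computation: on the hyperplane the cubic factors as $(z_2+z_3)(\ell_2^2 z_1z_4-\ell_1^2 z_2z_3)$, the conic contains the four points realizing the partitions $\{1,2\}\mid\{3,4\}$ and $\{1,3\}\mid\{2,4\}$ and misses the two points of $\{1,4\}\mid\{2,3\}$, which lie on the discarded line, in agreement with the paper's remark that the two non-active NAC-colorings of the deltoid correspond to the degenerate motion identifying vertices $2$ and $4$. Your route is longer but explains \emph{why} the kite has only two active colorings while a general quadrilateral (an irreducible cubic meeting all six coordinate points) has three, which the paper only states in the remark following the lemma; the paper's route buys brevity by leaning on the explicit valuations already tabulated.
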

\begin{proof}
Let $\rho_t:\{1,2,3,4\}\to\RR^2$ be a parametrization of an algebraic motion of the 4-cycle with a labeling $\lambda$.
We define three functions from $\RR$ to $\RR^2$ by 
\[ f_1(t)=\rho_t(2)-\rho_t(1),\ f_2(t)=\rho_t(3)-\rho_t(2),\ f_3(t)=\rho_t(4)-\rho_t(3) . \]
The norms $\norm{f_1}, \norm{f_2}, \norm{f_3}$ and $\norm{-f_1(t)-f_2(t)-f_3(t)}$ 
are the corresponding values of $\lambda$, i.e., they are independent of $t$. 
For each $t\in\RR$, we define 
\[ \widetilde{\rho_t}:V\to\RR^2,\  u\mapsto \omega_1(u)f_1(t)+\omega_2(u)f_2(t)+\omega_3(u)f_3(t)\,, \]
where $\omega(u)=\left(\omega_1(u),\omega_2(u),\omega_3(u)\right)$.
For any edge $uv$, $\widetilde{\rho_t}(u)-\widetilde{\rho_t}(v)$ is 
a multiple of $f_1,f_2,f_3$ or $-f_1(t)-f_2(t)-f_3(t)$ by assumption.
Thus, the distance $\norm{\widetilde{\rho_t}(u)-\widetilde{\rho_t}(v)}$ is independent of $t$ and different
from zero. 
Hence, the set of all $\rho_t$ is an algebraic motion; this proves the first statement.

In order to construct an algebraic motion with two active NAC-colorings,
we take~$\rho_t$ to be the parametrization of the deltoid in Example~\ref{ex:deltoid}.
For any edge $uv$, the function~$W_{u,v}$ is just a scalar multiple 
of one of the functions in the example.
Hence, there are only two active NAC-colorings modulo conjugation, see Table~\ref{tab:deltoid}.
\end{proof}

\begin{rem}
	Clearly, we can construct also an algebraic motion 
	with three non-conjugated active NAC-colorings by taking $\rho_t$ 
	from an algebraic motion of the 4-cycle with a general edge lengths,
	which has three non-conjugated active NAC-colorings.
	This also shows that if $\delta_1$ and $\delta_2$ are the two active NAC-colorings from the second statement of the lemma,
	then the coloring~$\delta_3$, given by $\delta_3(e)=\blue{}$ if and only if $\delta_1(e)=\delta_2(e)$,
	is also a NAC-coloring of $G$. 
	This follows from the fact that there are only three non-conjugated NAC-colorings of a 4-cycle with one chosen edge being always \blue{}
	and they are related as given above.
\end{rem}

Lemma~\ref{lem:twoNacs} allows to compute algebraic motions with exactly two active NAC-color\-ings:
For any pair of NAC-colorings, try to find an embedding $\omega:V\to\RR^3$ with edge directions
$(1,0,0)$, $(0,1,0)$, $(0,0,1)$ or $(-1,-1,-1)$ depending on the colors in these two colorings. This leads to a system
of linear equations. If it has a non-trivial solution, check if a general solution is injective.

\begin{exmp} \label{ex:Q1}
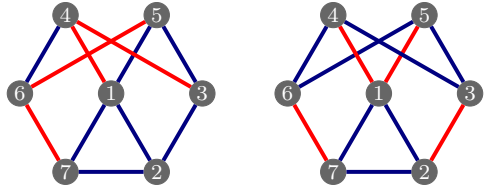
\begin{figure}[htb]
	\centering	
		\begin{tikzpicture}[scale=1.2]
		    \node[lnode] (5) at (0.500, 0.866) {5};
		    \node[lnode] (4) at (-0.500, 0.866) {4};
		    \node[lnode] (6) at (-1.00, 0.000) {6};
		    \node[lnode] (3) at (1.00, 0.000) {3};
		    \node[lnode] (2) at (0.500, -0.866) {2};
		    \node[lnode] (0) at (-0.500, -0.866) {7};
		    \node[lnode] (1) at (0.000, 0.000) {1};
		    \draw[bedge] (1)edge(2)  (1)edge(5)  (3)edge(5)  (2)edge(3)  (0)edge(2)  (4)edge(6)  (0)edge(1)    ;
		    \draw[redge] (0)edge(6)  (1)edge(4)  (3)edge(4)  (5)edge(6)   ;
		\end{tikzpicture}
		\hspace{0.5cm}
		\begin{tikzpicture}[scale=1.2]
		    \node[lnode] (5) at (0.500, 0.866) {5};
		    \node[lnode] (4) at (-0.500, 0.866) {4};
		    \node[lnode] (6) at (-1.00, 0.000) {6};
		    \node[lnode] (3) at (1.00, 0.000) {3};
		    \node[lnode] (2) at (0.500, -0.866) {2};
		    \node[lnode] (0) at (-0.500, -0.866) {7};
		    \node[lnode] (1) at (0.000, 0.000) {1};
		    \draw[redge] (1)edge(5);
		    \draw[bedge] (1)edge(2)  (3)edge(5)  (3)edge(4)  (0)edge(2)  (5)edge(6)  (4)edge(6)  (0)edge(1) ;
		    \draw[redge] (0)edge(6)  (1)edge(4)  (2)edge(3)   ;
		\end{tikzpicture}
\caption{The graph $Q_1$ with a pair of NAC-colorings giving an embedding in $\RR^3$.}
\label{fig:magic7}
\end{figure}
In order to find an embedding $\omega:V\to\RR^3$ for the graph $Q_1$ 
using the NAC-colorings in Figure~\ref{fig:magic7}, such that 
every edge colored with blue/blue is parallel to~$(1,0,0)$,
every edge colored with blue/red is parallel to $(0,1,0)$, 
every edge colored with red/blue is parallel to $(0,0,1)$, and
every edge colored with red/red is parallel to~$(-1,-1,-1)$, 
we put $\omega(1)$ to the origin and introduce variables $x_i,y_i,z_i$ for $p_i:=\omega(i)$, $i=2,\dots,7$.
For each edge, we have two linear equations. We obtain the system
\begin{align*}
	0&=y_{7}=z_{7}=y_{7} - y_{2}=z_{7} - z_{2}= y_{2}= z_{2}=y_{3} - y_{5}=z_{3} - z_{5}=y_{4} - y_{6} \,, \\
	0&=z_{4} - z_{6}=x_{5}= z_{5}=x_{2} - x_{3}=z_{2} - z_{3}=x_{3} - x_{4}=y_{3} - y_{4}=x_{5} - x_{6}\,, \\
	0&=y_{5} - y_{6}=x_{7} - x_{6} - y_{7} + y_{6}=x_{7} - x_{6} - z_{7} + z_{6}=  x_{4}  -y_{4}= x_{4} - z_{4}\,,
\end{align*}
with the general solution parametrized by $t\in\RR$
\[ (p_1,\dots,p_7) = ((0,0,0), (t, 0, 0), (t, t, 0), (t, t, t), (0, t, 0), (0, t, t), (-t, 0, 0)) . \]
The solution is injective for $t\neq 0$.
If we take $t=1$ and the parametrization of the deltoid from Example~\ref{ex:deltoid},
then we obtain an algebraic motion of the graph such that its projection to the 4-cycle induced by $\{1,2,3,4\}$
is precisely the motion of the deltoid.
Any other parametrization of the 4-cycle also yields an algebraic motion of the whole graph.
Figure~\ref{fig:Q1pfls} illustrates using the deltoid and also a general quadrilateral.
We remark that the triangle $\{1,2,7\}$ is degenerated independently of the choice of parametrization of the 4-cycle.
Moreover, the 4-cycles $\{1,2,3,5\}$, $\{3,5,6,4\}$ and $\{1,4,6,7\}$ are always parallelograms.
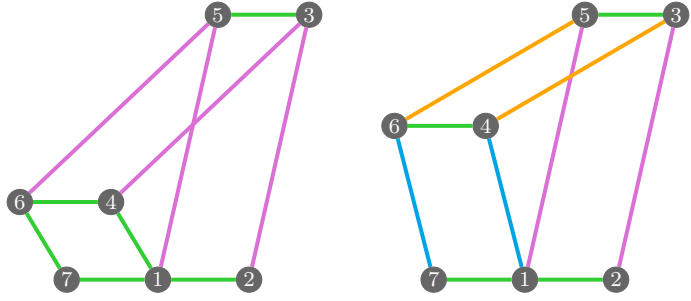
\begin{figure}[htb]
	\centering	
		\begin{tikzpicture}[scale=1.2]
			\node[lnode] (0) at (-1, 0) {7};
			\node[lnode] (1) at (0, 0) {1};
			\node[lnode] (2) at (1, 0) {2};
			\node[lnode] (3) at (68/41, 120/41) {3};
			\node[lnode] (4) at (-611/1189, 1020/1189) {4};
			\node[lnode] (5) at (27/41, 120/41) {5};
			\node[lnode] (6) at (-1800/1189, 1020/1189) {6};
		    \draw[edge, col1] (0)edge(1) (1)edge(2) (4)edge(6) (3)edge(5) (0)edge(6) (1)edge(4);
			\draw[edge, col2]  (2)edge(3) (3)edge(4)  (1)edge(5) (5)edge(6);
		\end{tikzpicture}
		\hspace{0.5cm}
		\begin{tikzpicture}[scale=1.2]
			\node[lnode] (0) at (-1, 0) {7};
			\node[lnode] (1) at (0, 0) {1};
			\node[lnode] (2) at (1, 0) {2};
			\node[lnode] (3) at (68/41, 120/41) {3};
			\node[lnode] (4) at (-511/1189, 2020/1189) {4};
			\node[lnode] (5) at (27/41, 120/41) {5};
			\node[lnode] (6) at (-1700/1189, 2020/1189) {6};
		    \draw[edge, col1] (0)edge(1) (1)edge(2) (4)edge(6) (3)edge(5) ;
			\draw[edge, col2]  (2)edge(3)  (1)edge(5);
			\draw[edge, col3] (3)edge(4) (5)edge(6);
			\draw[edge, col4] (0)edge(6) (1)edge(4);
		\end{tikzpicture}
	\caption{Embeddings of the graph $Q_1$ compatible with proper flexible labelings 
	induced by a deltoid and a general quadrilateral
	(colors indicate edges with same lengths).
	Note that the vertices 1,2,7 form a degenerate triangle.}
\label{fig:Q1pfls}
\end{figure}
\end{exmp}

By applying the described procedure to all pairs of NAC-colorings for the graphs in the list,
we obtain the following:
\begin{cor}
	\label{cor:twoNacGraphs}
	The graphs $Q_1, \dots, Q_6$, see Figure~\ref{fig:constDistClosures} or~\ref{fig:graphsTwoNAC}, are movable.
\end{cor}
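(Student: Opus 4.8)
The plan is to apply Lemma~\ref{lem:twoNacs} to each of the six graphs $Q_1,\dots,Q_6$ separately. For each graph it suffices to exhibit a single injective embedding $\omega\colon V\to\RR^3$ in which every edge is parallel to one of the four vectors $(1,0,0)$, $(0,1,0)$, $(0,0,1)$, $(-1,-1,-1)$ and in which all four directions actually occur; Lemma~\ref{lem:twoNacs} then yields movability at once. Since the case $Q_1$ has already been carried out in detail in Example~\ref{ex:Q1}, only $Q_2,\dots,Q_6$ remain, and each is handled by the same recipe.

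For each remaining graph I would follow exactly the procedure described after Example~\ref{ex:Q1}. First I select a suitable pair of NAC-colorings $(\delta_1,\delta_2)$ --- the admissible pairs are recorded in Figure~\ref{fig:graphsTwoNAC} --- and assign to every edge the target direction dictated by its pair of colors (blue/blue to $(1,0,0)$, blue/red to $(0,1,0)$, red/blue to $(0,0,1)$, red/red to $(-1,-1,-1)$). Placing one vertex at the origin and introducing coordinates for the remaining vertices, each edge then contributes two linear equations, namely the two coordinates that must vanish for its prescribed direction. I would solve the resulting homogeneous linear system and read off the general solution, which by construction is a one-parameter family in $t\in\RR$, exactly as in the displayed solution $(p_1,\dots,p_7)$ for $Q_1$.

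The concluding step for each graph is to verify that a generic member of this family ($t\neq 0$) is injective, i.e.\ that no two vertices receive the same point of $\RR^3$. This is a finite check: the difference of any two position vectors is a fixed linear form in $t$, so one only needs to confirm that none of these finitely many forms vanishes identically. Once injectivity holds and all four directions are present, Lemma~\ref{lem:twoNacs} certifies that the graph is movable. The conceptual content is entirely in Lemma~\ref{lem:twoNacs}; the per-graph work is routine linear algebra, and the only genuine obstacle is the \emph{existence} question --- whether for each $Q_i$ some pair of NAC-colorings produces a nondegenerate injective solution rather than only the trivial one. This is settled affirmatively by the explicit colorings and embeddings displayed in Figure~\ref{fig:graphsTwoNAC}, which are found by a computer search over all pairs of NAC-colorings of each graph.
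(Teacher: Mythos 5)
Your proposal matches the paper's proof: the paper likewise invokes Lemma~\ref{lem:twoNacs} for each of $Q_1,\dots,Q_6$, using the pairs of NAC-colorings displayed in Figure~\ref{fig:graphsTwoNAC} to set up and solve the linear system for an injective embedding in $\RR^3$ exactly as in Example~\ref{ex:Q1}. The approach and the supporting data are the same, so there is nothing to add.
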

\begin{proof}
	Figure~\ref{fig:graphsTwoNAC} shows the pairs of NAC-colorings of the graphs in $Q_1, \dots, Q_6$
	that can be used to construct an injective embedding in $\RR^3$ analogously to Example~\ref{ex:Q1}.
	Hence, they are movable by Lemma~\ref{lem:twoNacs}.
\end{proof}

\begin{figure}[htb]
	\centering
	\begin{tabular}{cccccc}
	\begin{tikzpicture}[scale=1*0.8]
		    \node[vertex] (1) at (0.500, 0.866) {};
		    \node[vertex] (0) at (-0.500, 0.866) {};
		    \node[vertex] (2) at (-1.00, 0.000) {};
		    \node[vertex] (3) at (1.00, 0.000) {};
		    \node[vertex] (5) at (0.500, -0.866) {};
		    \node[vertex] (6) at (-0.500, -0.866) {};
		    \node[vertex] (7) at (0.000, 0.000) {};
		    \draw[redge] (2)edge(6) (1)edge(7)  ;
		    \draw[bedge] (0)edge(7) (3)edge(5)  ;
		    \draw[bedge] (5)edge(7) (5)edge(6) (6)edge(7)  (1)edge(2) (0)edge(3)   ;
		    \draw[redge] (1)edge(3) (0)edge(2)  ;
		\end{tikzpicture}		
		&
		\begin{tikzpicture}[scale=1*0.8, yscale=-1]
		    \node[vertex] (0) at (-1.00, 0.000) {};
		    \node[vertex] (1) at (1.00, 0.000) {};
		    \node[vertex] (2) at (0.500, 0.866) {};
		    \node[vertex] (3) at (0.500, -0.866) {};
		    \node[vertex] (4) at (-0.500, -0.866) {};
		    \node[vertex] (5) at (-0.500, 0.866) {};
		    \node[vertex] (6) at (0.000, -0.300) {};
		    \node[vertex] (7) at (0.000, 0.300) {};
		    \draw[redge] (1)edge(3) (6)edge(7) (0)edge(5)  ;
		    \draw[redge] (3)edge(4) (3)edge(7) (4)edge(7) (1)edge(6)  ;
		    \draw[bedge] (0)edge(6) (5)edge(7) (2)edge(5) (2)edge(7)  ;
		    \draw[bedge] (1)edge(2) (0)edge(4)  ;
		\end{tikzpicture}
		&
		\begin{tikzpicture}[scale=1*0.8]
		    \node[vertex] (1) at (0.500, 0.866) {};
		    \node[vertex] (0) at (-0.500, 0.866) {};
		    \node[vertex] (2) at (-1.00, 0.000) {};
		    \node[vertex] (3) at (1.00, 0.000) {};
		    \node[vertex] (4) at (0.000, -0.433) {};
		    \node[vertex] (5) at (0.500, -0.866) {};
		    \node[vertex] (6) at (-0.500, -0.866) {};
		    \node[vertex] (7) at (0.000, 0.000) {};
		    \draw[redge] (2)edge(6) (1)edge(7)  ;
		    \draw[bedge] (0)edge(7) (3)edge(5)  ;
		    \draw[bedge] (5)edge(7) (5)edge(6) (6)edge(7) (4)edge(6) (4)edge(7) (1)edge(2) (0)edge(3) (4)edge(5)  ;
		    \draw[redge] (1)edge(3) (0)edge(2)  ;
		\end{tikzpicture}		
		&
		\begin{tikzpicture}[scale=1*0.8]
		    \node[vertex] (0) at (0.500, -0.866) {};
		    \node[vertex] (1) at (-0.500, -0.866) {};
		    \node[vertex] (2) at (0.500, 0.866) {};
		    \node[vertex] (3) at (-0.500, 0.866) {};
		    \node[vertex] (4) at (-1.00, 0.000) {};
		    \node[vertex] (5) at (1.00, 0.000) {};
		    \node[vertex] (6) at (0.300, -0.100) {};
		    \node[vertex] (7) at (-0.300, -0.100) {};
		    \draw[redge] (0)edge(7) (1)edge(4)  ;
		    \draw[bedge] (0)edge(5) (1)edge(6)  ;
		    \draw[bedge] (3)edge(4) (3)edge(7) (4)edge(7) (5)edge(6) (2)edge(6) (2)edge(5) (0)edge(1)  ;
		    \draw[redge] (2)edge(4) (6)edge(7) (3)edge(5)  ;
		\end{tikzpicture}
		&
		\begin{tikzpicture}[scale=1*0.8]
		    \node[vertex] (0) at (-0.588, -0.809) {};
		    \node[vertex] (1) at (0.588, -0.809) {};
		    \node[vertex] (2) at (0.951, 0.309) {};
		    \node[vertex] (3) at (-0.951, 0.309) {};
		    \node[vertex] (4) at (0.235, 0.324) {};
		    \node[vertex] (5) at (-0.235, 0.324) {};
		    \node[vertex] (6) at (0.000, -0.400) {};
		    \node[vertex] (7) at (0.000, 1.00) {};
		    \draw[bedge] (1)edge(2) (0)edge(3)  ;
		    \draw[bedge] (0)edge(6) (1)edge(6) (0)edge(1)  ;
		    \draw[redge] (2)edge(4) (5)edge(6) (4)edge(7) (2)edge(7)  ;
		    \draw[redge] (5)edge(7) (3)edge(5) (3)edge(7) (4)edge(6)  ;
		\end{tikzpicture}
		&
		\begin{tikzpicture}[scale=1*0.8]
		    \node[vertex] (0) at (0.000, -0.433) {};
		    \node[vertex] (1) at (-0.500, -0.866) {};
		    \node[vertex] (2) at (0.500, -0.866) {};
		    \node[vertex] (3) at (1.00, 0.000) {};
		    \node[vertex] (4) at (-1.00, 0.000) {};
		    \node[vertex] (5) at (0.000, 0.000) {};
		    \node[vertex] (6) at (0.500, 0.866) {};
		    \node[vertex] (7) at (-0.500, 0.866) {};
		    \draw[redge] (3)edge(7) (4)edge(6)  ;
		    \draw[bedge] (0)edge(5) (1)edge(7) (4)edge(7) (3)edge(6) (2)edge(3) (2)edge(6) (1)edge(4)  ;
		    \draw[redge] (5)edge(6) (0)edge(2)  ;
		    \draw[bedge] (5)edge(7) (0)edge(1)  ;
		\end{tikzpicture}
		\\
		\begin{tikzpicture}[scale=1*0.8]
		    \node[vertex] (1) at (0.500, 0.866) {};
		    \node[vertex] (0) at (-0.500, 0.866) {};
		    \node[vertex] (2) at (-1.00, 0.000) {};
		    \node[vertex] (3) at (1.00, 0.000) {};
		    \node[vertex] (5) at (0.500, -0.866) {};
		    \node[vertex] (6) at (-0.500, -0.866) {};
		    \node[vertex] (7) at (0.000, 0.000) {};
		    \draw[bedge] (2)edge(6) (1)edge(7)  ;
		    \draw[redge] (0)edge(7) (3)edge(5)  ;
		    \draw[bedge] (5)edge(7) (5)edge(6) (6)edge(7)  (1)edge(2) (0)edge(3)   ;
		    \draw[redge] (1)edge(3) (0)edge(2)  ;
		\end{tikzpicture}		
		&
		\begin{tikzpicture}[scale=1*0.8, yscale=-1]
		    \node[vertex] (0) at (-1.00, 0.000) {};
		    \node[vertex] (1) at (1.00, 0.000) {};
		    \node[vertex] (2) at (0.500, 0.866) {};
		    \node[vertex] (3) at (0.500, -0.866) {};
		    \node[vertex] (4) at (-0.500, -0.866) {};
		    \node[vertex] (5) at (-0.500, 0.866) {};
		    \node[vertex] (6) at (0.000, -0.300) {};
		    \node[vertex] (7) at (0.000, 0.300) {};
		    \draw[bedge] (1)edge(3) (6)edge(7) (0)edge(5)  ;
		    \draw[redge] (3)edge(4) (3)edge(7) (4)edge(7) (1)edge(6)  ;
		    \draw[bedge] (0)edge(6) (5)edge(7) (2)edge(5) (2)edge(7)  ;
		    \draw[redge] (1)edge(2) (0)edge(4)  ;
		\end{tikzpicture}
		&
		\begin{tikzpicture}[scale=1*0.8]
		    \node[vertex] (1) at (0.500, 0.866) {};
		    \node[vertex] (0) at (-0.500, 0.866) {};
		    \node[vertex] (2) at (-1.00, 0.000) {};
		    \node[vertex] (3) at (1.00, 0.000) {};
		    \node[vertex] (4) at (0.000, -0.433) {};
		    \node[vertex] (5) at (0.500, -0.866) {};
		    \node[vertex] (6) at (-0.500, -0.866) {};
		    \node[vertex] (7) at (0.000, 0.000) {};
		    \draw[bedge] (2)edge(6) (1)edge(7)  ;
		    \draw[redge] (0)edge(7) (3)edge(5)  ;
		    \draw[bedge] (5)edge(7) (5)edge(6) (6)edge(7) (4)edge(6) (4)edge(7) (1)edge(2) (0)edge(3) (4)edge(5)  ;
		    \draw[redge] (1)edge(3) (0)edge(2)  ;
		\end{tikzpicture}		
		&
		\begin{tikzpicture}[scale=1*0.8]
		    \node[vertex] (0) at (0.500, -0.866) {};
		    \node[vertex] (1) at (-0.500, -0.866) {};
		    \node[vertex] (2) at (0.500, 0.866) {};
		    \node[vertex] (3) at (-0.500, 0.866) {};
		    \node[vertex] (4) at (-1.00, 0.000) {};
		    \node[vertex] (5) at (1.00, 0.000) {};
		    \node[vertex] (6) at (0.300, -0.100) {};
		    \node[vertex] (7) at (-0.300, -0.100) {};
		    \draw[redge] (0)edge(7) (1)edge(4)  ;
		    \draw[redge] (0)edge(5) (1)edge(6)  ;
		    \draw[bedge] (3)edge(4) (3)edge(7) (4)edge(7) (5)edge(6) (2)edge(6) (2)edge(5) (0)edge(1)  ;
		    \draw[bedge] (2)edge(4) (6)edge(7) (3)edge(5)  ;
		\end{tikzpicture}
		&
		\begin{tikzpicture}[scale=1*0.8]
		    \node[vertex] (0) at (-0.588, -0.809) {};
		    \node[vertex] (1) at (0.588, -0.809) {};
		    \node[vertex] (2) at (0.951, 0.309) {};
		    \node[vertex] (3) at (-0.951, 0.309) {};
		    \node[vertex] (4) at (0.235, 0.324) {};
		    \node[vertex] (5) at (-0.235, 0.324) {};
		    \node[vertex] (6) at (0.000, -0.400) {};
		    \node[vertex] (7) at (0.000, 1.00) {};
		    \draw[redge] (1)edge(2) (0)edge(3)  ;
		    \draw[bedge] (0)edge(6) (1)edge(6) (0)edge(1)  ;
		    \draw[bedge] (2)edge(4) (5)edge(6) (4)edge(7) (2)edge(7)  ;
		    \draw[redge] (5)edge(7) (3)edge(5) (3)edge(7) (4)edge(6)  ;
		\end{tikzpicture}
		&
		\begin{tikzpicture}[scale=1*0.8]
		    \node[vertex] (0) at (0.000, -0.433) {};
		    \node[vertex] (1) at (-0.500, -0.866) {};
		    \node[vertex] (2) at (0.500, -0.866) {};
		    \node[vertex] (3) at (1.00, 0.000) {};
		    \node[vertex] (4) at (-1.00, 0.000) {};
		    \node[vertex] (5) at (0.000, 0.000) {};
		    \node[vertex] (6) at (0.500, 0.866) {};
		    \node[vertex] (7) at (-0.500, 0.866) {};
		    \draw[redge] (3)edge(7) (4)edge(6)  ;
		    \draw[bedge] (0)edge(5) (1)edge(7) (4)edge(7) (3)edge(6) (2)edge(3) (2)edge(6) (1)edge(4)  ;
		    \draw[bedge] (5)edge(6) (0)edge(2)  ;
		    \draw[redge] (5)edge(7) (0)edge(1)  ;
		\end{tikzpicture}
		\\
		$Q_1$ & $Q_2$ &$Q_3$ &$Q_4$ &$Q_5$ &$Q_6$ 
	\end{tabular}
	\caption{Pairs of NAC-colorings used for construction of injective embeddings in $\RR^3$ 
	satisfying the assumption of Lemma~\ref{lem:twoNacs}.}
	\label{fig:graphsTwoNAC}
\end{figure}
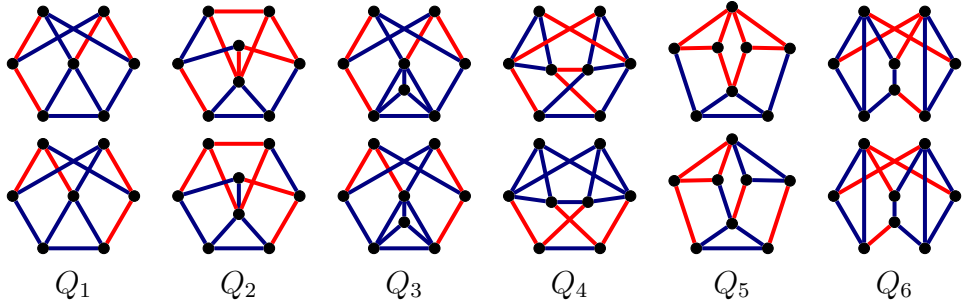

For the graphs $S_1,\dots,S_4$, we take advantage of the fact 
that they contain other graphs in the list as subgraphs.
The next lemma formalizes the general construction based on movable subgraphs.
\begin{lem}\label{lem:combiningTwoGraphs}
	Let $G$ be a graph.	
	Let $G_1$ and $G_2$ be two subgraphs of $G$ such that $V_G=V_{G_1}\cup V_{G_2}, E_G=E_{G_1}\cup E_{G_2}$
	and $E_{G_1}\cap E_{G_2}\neq\emptyset$.
	Let $W=V_{G_1}\cap V_{G_2}$.
	Let $\lambda_1$ and $\lambda_2$ be proper flexible labelings of $G_1$ and $G_2$ respectively.
	If there are algebraic motions $\C_1$ of~$(G_1,\lambda_1)$ and $\C_2$ of $(G_2,\lambda_2)$
	such that:
	\begin{enumerate}
		\item\label{it:combiningTwoGraphs:proj} the projections of $\C_1$ and $\C_2$ to $W$ are the same, and
		\item for all $v_1\in V_{G_1}\setminus W$ and $v_2\in V_{G_2}\setminus W$, the projections of $\C_1$ to $v_1$ and $\C_2$ to $v_2$ are different,
	\end{enumerate}	 
	then there exists a proper flexible labeling of $G$.
\end{lem}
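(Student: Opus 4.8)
The plan is to manufacture infinitely many injective realizations of $G$ by gluing a realization drawn from $\C_1$ to one drawn from $\C_2$ along the shared vertices $W$, and to read off a proper flexible labeling from them. First I would align the two motions in a single coordinate frame. Since $E_{G_1}\cap E_{G_2}\neq\emptyset$, choose a common edge $\bar u\bar v$; both of its endpoints lie in $W$. By Lemma~\ref{lem:motionFixingGivenEdge} I may replace $\C_1$ and $\C_2$ by birationally equivalent motions that both fix $\bar u\bar v$, so that every realization in either curve sends $\bar u\mapsto(0,0)$ and $\bar v\mapsto(\lambda(\bar u\bar v),0)$. This is what makes hypothesis~(i) a statement about equal, not merely congruent, restrictions to $W$: writing $\mathcal M_W$ for the common image $\pi_W(\C_1)=\pi_W(\C_2)$, any edge $xy$ with $x,y\in W$ has the same length along $\C_1$ and along $\C_2$, so $\lambda_1(e)=\lambda_2(e)$ for every common edge $e\in E_{G_1}\cap E_{G_2}$ (whose endpoints lie in $W$). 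Hence $\lambda\colon E_G\to\RR_+$ with $\lambda|_{E_{G_1}}=\lambda_1$, $\lambda|_{E_{G_2}}=\lambda_2$ is well defined.

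Then I would glue. By~(i), for each $\rho_1\in\C_1$ the restriction $\rho_1|_W$ lies in $\mathcal M_W=\pi_W(\C_2)$, so some $\rho_2\in\C_2$ satisfies $\rho_2|_W=\rho_1|_W$; setting $\rho|_{V_{G_1}}=\rho_1$ and $\rho|_{V_{G_2}}=\rho_2$ yields a well-defined map on $V_G$ that is compatible with $\lambda$, since it satisfies the $\lambda_1$-constraints on $E_{G_1}$, the $\lambda_2$-constraints on $E_{G_2}$, and $E_G=E_{G_1}\cup E_{G_2}$. I would organize this on the fibre product of $\C_1$ and $\C_2$ over $\mathcal M_W$ inside $\setreal{G}{\lambda}$, and pass to an irreducible component $\C$ whose projections to $\C_1$ and to $\C_2$ are both dominant. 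Such a component exists: the projection to $\C_1$ is surjective by~(i), so some component dominates $\C_1$; and since $W\subseteq V_{G_2}$, the motion of $W$ forces the projection to $\C_2$ to be dominant as well, while if instead $W$ stays pointwise fixed the fibre product is the whole product $\C_1\times\C_2$ and both projections are surjective outright. As $\C$ is infinite and the common edge is fixed, the glued realizations fill out infinitely many congruence classes, so $\lambda$ is flexible.

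It remains to show $\lambda$ is \emph{proper}, and this is where~(i) and~(ii) do the real work. A glued realization $\rho$ is non-injective precisely when $\rho(a)=\rho(b)$ for distinct $a,b\in V_G$, and I would split into cases by the location of $a,b$. If both lie in $V_{G_1}$, then $\rho(a)=\rho_1(a)$, $\rho(b)=\rho_1(b)$, and properness of $\lambda_1$ makes the non-injective locus of $\C_1$ a proper closed subset, whose preimage under the dominant projection $\C\to\C_1$ is proper closed in $\C$; the case $a,b\in V_{G_2}$ is symmetric. Because $W\subseteq V_{G_1}$ and $W\subseteq V_{G_2}$, the only remaining pairs are $a=v_1\in V_{G_1}\setminus W$ and $b=v_2\in V_{G_2}\setminus W$. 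For these, if $\rho(v_1)=\rho(v_2)$ held identically on $\C$, then, by dominance of both projections, the projection of $\C_1$ to $v_1$ and of $\C_2$ to $v_2$ would have the same image, contradicting~(ii); hence this coincidence also cuts out a proper closed subset of $\C$. A finite union of proper closed subsets of the irreducible $\C$ is proper, so the injective glued realizations form a dense, hence infinite, family, yielding infinitely many injective realizations of $G$ up to congruence. Therefore $\lambda$ is a proper flexible labeling of $G$.

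The main obstacle is the properness argument, and specifically the cross pairs $(v_1,v_2)$: coincidences inside $V_{G_1}$ or inside $V_{G_2}$ are already excluded by properness of $\lambda_1,\lambda_2$, but a vertex of $G_1$ outside $W$ meeting a vertex of $G_2$ outside $W$ is governed by nothing except hypothesis~(ii). The delicate step is to confirm that~(ii) forces the locus $\{\rho(v_1)=\rho(v_2)\}$ to be a proper subvariety rather than all of $\C$; this is exactly why I must first secure a component $\C$ dominating \emph{both} $\C_1$ and $\C_2$, and why the alignment via a common fixed edge in the first step is essential, so that~(i) and~(ii) can be read as equalities of genuine configurations rather than of configurations up to an independent rigid motion of each piece.
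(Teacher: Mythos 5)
Your proposal follows essentially the same route as the paper's proof: define $\lambda$ by $\lambda|_{E_{G_1}}=\lambda_1$, $\lambda|_{E_{G_2}}=\lambda_2$ (well-defined by~(i) since every common edge has both endpoints in $W$), extend each realization of the projection to $W$ to a realization of $G$ compatible with $\lambda$, and invoke~(ii) to rule out coincidences. Your handling of properness is in fact more careful than the paper's one-line assertion that \emph{all} extended realizations are injective: you correctly note that coincidences within $V_{G_1}$ or within $V_{G_2}$ are excluded only generically (via properness of $\lambda_1,\lambda_2$ witnessed on $\C_1,\C_2$), that only the cross pairs $(v_1,v_2)$ are governed by~(ii), and that a component of the fibre product dominating both factors makes the non-injective locus a finite union of proper closed subsets --- which is all that is needed.
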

\begin{proof}
	We define a labeling $\lambda$ of $G$ by $\lambda|_{E_{G_1}}=\lambda_1$ and $\lambda|_{E_{G_2}}=\lambda_2$.
	This is well-defined, since $\lambda_1|_{E_{G_1}\cap E_{G_2}}=\lambda_2|_{E_{G_1}\cap E_{G_2}}$ by \ref{it:combiningTwoGraphs:proj}.
	Now, every realization in the projection of $\C_1$ to $W$ can be extended to a realization of $G$ that is compatible with $\lambda$.
	Hence, $\lambda$ is flexible. It is also proper, since all extended realizations are injective by the second assumption.
\end{proof}

Now, we identify the suitable subgraphs and motions for $S_1, S_2$ and $S_3$.
Movability of~$S_4$ does not follow from the previous lemma, but it is straightforward.
\begin{cor}
	\label{cor:movableByCombination}
	The graphs $S_1, S_2,S_3$ and $S_4$, see Figure~\ref{fig:constDistClosures} or~\ref{fig:S1S2S3}, are movable.
\end{cor}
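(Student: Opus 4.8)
The plan is to handle $S_1,S_2,S_3$ uniformly through Lemma~\ref{lem:combiningTwoGraphs} and to dispose of $S_4$ by a separate rigid-body argument. For each of $S_1,S_2,S_3$ I would first exhibit a decomposition into two subgraphs $G_1,G_2$ taken from graphs already shown to be movable (bipartite graphs via Lemma~\ref{lem:bipartiteGraphs}, the single-NAC graphs of Lemma~\ref{lem:oneNac}, or the two-NAC graphs of Lemma~\ref{lem:twoNacs}), chosen so that $V_{G_1}\cup V_{G_2}=V_{S_i}$, $E_{G_1}\cup E_{G_2}=E_{S_i}$ and $E_{G_1}\cap E_{G_2}\neq\emptyset$; these are exactly the decompositions drawn in Figure~\ref{fig:S1S2S3}. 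The real content is then to produce, for each decomposition, proper flexible labelings $\lambda_1,\lambda_2$ together with algebraic motions $\C_1,\C_2$ satisfying the two hypotheses of Lemma~\ref{lem:combiningTwoGraphs}.

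The heart of the matter is hypothesis (i): the projections of $\C_1$ and $\C_2$ to the shared vertex set $W=V_{G_1}\cap V_{G_2}$ must be the same curve. I would force this by building both motions from a single parametrized motion of the overlap. When $G_1,G_2$ are obtained from the $\RR^3$-embedding construction of Lemma~\ref{lem:twoNacs}, each of their motions is assembled from one parametrization $\rho_t$ of an auxiliary $4$-cycle through the functions $f_1,f_2,f_3$, as in Example~\ref{ex:Q1}; choosing the \emph{same} $\rho_t$ for both pieces and embeddings $\omega^{(1)},\omega^{(2)}$ that agree on $W$ makes the two motions coincide on $W$ and simultaneously makes $\lambda_1,\lambda_2$ agree on the common edges $E_{G_1}\cap E_{G_2}$, so the glued labeling is well defined. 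If one piece is bipartite instead, the analogous matching is obtained by placing the shared vertices consistently on the two Dixon axes. Hypothesis (ii), that no vertex of $G_1\setminus W$ ever collides with a vertex of $G_2\setminus W$, is a genericity statement: it fails only for finitely many values of the free parameters in the embeddings/labelings, so an admissible choice exists. Lemma~\ref{lem:combiningTwoGraphs} then yields a proper flexible labeling of each $S_i$, $i\in\{1,2,3\}$.

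For $S_4$ that lemma does not apply, because $S_4$ is a copy of $K_{3,3}$ to which a $K_4$ has been glued along a single edge $e=a_4a_5$, and $K_4$ is rigid, hence not movable, so there is no movable second piece to feed into Lemma~\ref{lem:combiningTwoGraphs}. Instead I would use the Dixon~I motion of the $K_{3,3}$ part from Lemma~\ref{lem:bipartiteGraphs}, in which the two endpoints of $e$ lie on different axes. Along this motion every edge of $K_{3,3}$, and in particular $e$, keeps constant length, so the segment realizing $e$ is a rigid bar of fixed length sliding with one endpoint on each axis. The two remaining vertices of the $K_4$ are then attached rigidly to this bar at fixed relative positions, making the six $K_4$-distances constant, and they move rigidly with it; this gives a flexible labeling of $S_4$. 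For a generic rigid attachment the two added vertices avoid the finitely many coincidence loci with the axis-bound vertices, so infinitely many realizations are injective and $S_4$ is movable.

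I expect the main obstacle to be hypothesis (i) of Lemma~\ref{lem:combiningTwoGraphs}: for each chosen decomposition of $S_1,S_2,S_3$ one must supply proper flexible labelings of the two pieces whose motions genuinely restrict to \emph{one and the same} motion on $W$. Exhibiting the decompositions is combinatorial bookkeeping and hypothesis (ii) is generic, but the compatibility on the overlap is what actually constrains the construction and must be verified case by case; building both motions from a common parametrization of the shared part (a shared $4$-cycle motion in the two-NAC case, or consistent Dixon axes in the bipartite case) is the device that makes the matching possible.
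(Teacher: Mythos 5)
Your proposal follows essentially the same route as the paper: $S_1,S_2,S_3$ are each split into two movable pieces (the paper uses $L_1\cup K_{3,3}$ for $S_1$ and $Q_1\cup K_{3,3}$ for $S_2,S_3$) glued by Lemma~\ref{lem:combiningTwoGraphs}, with hypothesis~(i) secured exactly as you describe, by driving both motions from one motion of the overlap (the paper pins this down by checking that a specific $4$-cycle in the overlap is a rhombus, resp.\ parallelogram, in both motions); and $S_4$ is handled by rigidly attaching the extra $K_4$ vertices to an edge of a moving $K_{3,3}$. The only difference is that the paper carries out the case-by-case verification of the overlap compatibility that you correctly identify as the remaining work.
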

\begin{proof}
	Figure~\ref{fig:S1S2S3} shows vertex-labelings of the graphs $S_1,S_2$ and $S_3$ that are used in the proof.
	The labelings given by the displayed edge lengths are actually proper flexible.
	Edges with same lengths have the same color.
		
	Notice that the subgraphs $G_1$ and $G_2$ of $S_1$ induced by the vertices $\{1,\dots,6\}$
	and $\{3,\dots,8\}$ are isomorphic to $L_1$ and $K_{3,3}$ respectively.
	Since $G_1$ and $G_2$ satisfy the assumptions of Lemma~\ref{cor:movableByCombination},
	it is sufficient to take proper flexible labelings of $G_1$ and $G_2$, 
	given by Lemma~\ref{lem:oneNac} and~\ref{lem:bipartiteGraphs},
	such that the quadrilateral (3,4,5,6) in both graphs moves as a non-degenerated rhombus, 
	i.e., $\lambda(3,4)=\lambda(4,5)=\lambda(5,6)=\lambda(3,6)$.  

	Recall that a proper flexible labeling of $K_{4,4}$ according to Dixon~II is induced by placing the nodes 
	of the two partition sets to the vertices of two cocentric rectangles in orthogonal position.
	By removing two vertices, one can easily obtain a motion of $K_{3,3}$.
	
	The graph $S_2$ has a subgraph $H_{Q_1}$ induced by vertices $\{1,\dots,7\}$, which is isomorphic to $Q_1$,
	and $H_{K_{3,3}}$ induced by $\{1,\dots,5,8\}$ isomorphic to $K_{3,3}$.
	We consider a proper flexible labeling of the subgraph $H_{K_{3,3}}$
	with an algebraic motion by Dixon II according to Figure~\ref{fig:S1S2S3}.
	Now, we can use the motion of the 4-cycle $\{1,2,3,4\}$ to construct a motion of $H_{Q_1}$
	following Example~\ref{ex:Q1}. 
	Since the 4-cycle $\{1,2,3,5\}$ is a parallelogram in the motions of $H_{K_{3,3}}$ and $H_{Q_1}$,
	the subgraphs satisfy the assumption of Lemma~\ref{lem:combiningTwoGraphs}. Hence,~$S_2$ is movable.
	
	Similarly, we construct a proper flexible labeling of $S_3$, since the vertices $\{1,\dots,7\}$ and 
	$\{1,3,4,5,6,8\}$ induce subgraphs isomorphic to $Q_1$ and $K_{3,3}$, respectively.
	See Figure~\ref{fig:S1S2S3} for placing the vertices according to Dixon~II.
	Now, the 4-cycle $\{1,4,3,5\}$ is used to construct the motion according to Example~\ref{ex:Q1}.

	A proper flexible labeling of $S_4$ can be clearly obtained by extending a proper flexible labeling 
	of its $K_{3,3}$ subgraph.
\end{proof}
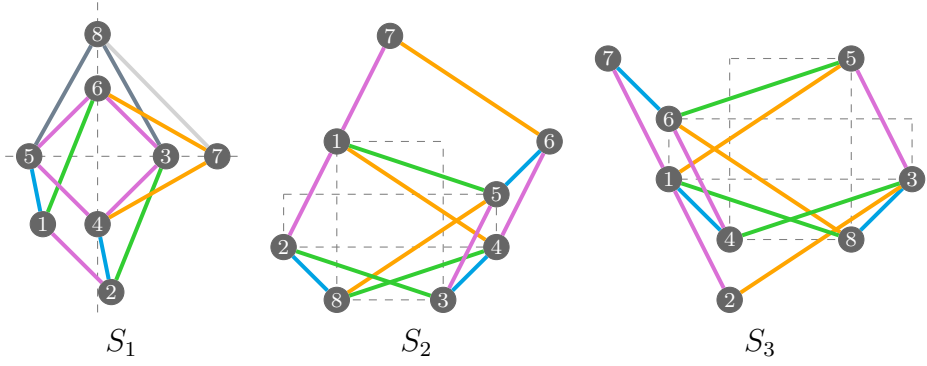
\begin{figure}[htb]
	\centering	
	\begin{tabular}{ccc}
		\begin{tikzpicture}[scale=0.9, rotate=90]
			\draw[gridl, dashed] (-2.3,0)edge(2.3,0);
			\draw[gridl, dashed] (0,1.35)edge(0,-2.05);
			\node[lnode] (2) at (1.8, 0) {8};
			\node[lnode] (5) at (-1., 0) {4};
			\node[lnode] (7) at (1, 0) {6};
			\node[lnode] (1) at (0, -1.75) {7};
			\node[lnode] (6) at (0,  1) {5};
			\node[lnode] (4) at (0, -1) {3};
			\node[lnode] (0) at (-1,  0.8) {1};
			\node[lnode] (3) at (-2, -0.2) {2};
			\draw[edge, col1] (7)edge(0) (3)edge(4);
			\draw[edge, col2]  (6)edge(5) (5)edge(4) (7)edge(4) (7)edge(6) (0)edge(3);
			\draw[edge, col5] (2)edge(4) (2)edge(6);
			\draw[edge, col3] (1)edge(5) (7)edge(1) ;
			\draw[edge, col6] (2)edge(1);
			\draw[edge, col4] (0)edge(6)  (5)edge(3) ;
		\end{tikzpicture}
		&
		\begin{tikzpicture}[scale=0.7]
			\draw[gridl, dashed] (1,0)edge(3,0) (1,3)edge(3,3) (0,1)edge(4,1) (0,2)edge(4,2);
			\draw[gridl, dashed] (0,1)edge(0,2) (4,1)edge(4,2) (1,0)edge(1,3) (3,0)edge(3,3); 
			\node[lnode] (7) at (1,3) {1};
			\node[lnode] (5) at (4,1) {4};
			\node[lnode] (3) at (3,0) {3};
			\node[lnode] (6) at (0,1) {2};
			\node[lnode] (2) at (1,0) {8};
			\node[lnode] (4) at (4,2) {5};
			\node[lnode] (1) at (2,5) {7};
			\node[lnode] (0) at (5,3) {6};
			\draw[edge, col4] (5)edge(3) (2)edge(6) (4)edge(0);
			\draw[edge, col3]  (2)edge(4) (0)edge(1) (5)edge(7);
			\draw[edge, col1] (2)edge(5) (7)edge(4) (3)edge(6);
			\draw[edge, col2] (7)edge(6) (3)edge(4) (1)edge(7) (5)edge(0);
		\end{tikzpicture}
		&
		\begin{tikzpicture}[scale=0.8, xscale=-1, yscale=1]
			\draw[gridl, dashed] (1,0)edge(3,0) (1,3)edge(3,3) (0,1)edge(4,1) (0,2)edge(4,2);
			\draw[gridl, dashed] (0,1)edge(0,2) (4,1)edge(4,2) (1,0)edge(1,3) (3,0)edge(3,3); 
			\node[lnode] (4) at (1,3) {5};
			\node[lnode] (7) at (4,1) {1};
			\node[lnode] (3) at (3,0) {4};
			\node[lnode] (6) at (0,1) {3};
			\node[lnode] (2) at (1,0) {8};
			\node[lnode] (5) at (4,2) {6};
			\node[lnode] (0) at (3,-1) {2};
			\node[lnode] (1) at (5,3) {7};
			\draw[edge, col4] (7)edge(3) (2)edge(6) (5)edge(1);
			\draw[edge, col3]  (2)edge(5) (7)edge(4) (0)edge(6);
			\draw[edge, col1] (2)edge(7) (4)edge(5) (3)edge(6);
			\draw[edge, col2] (4)edge(6) (3)edge(5) (7)edge(1) (0)edge(7);
		\end{tikzpicture}
		\\
		$S_1$ &$S_2$ &$S_3$
	\end{tabular}		
	\caption{The graphs $S_1,S_2$ and $S_3$ with proper flexible labelings (same colors mean same lengths).
		 Note that the vertices 1,2,7 in $S_2$ and $S_3$ form a degenerate triangle.}
	\label{fig:S1S2S3}
\end{figure}

Finally, only the graph $S_5$ is missing to be proven to be movable.
Unfortunately, none of the previous constructions applies in this case.
Hence, we provide a parametrization of its algebraic motion ad hoc.
\begin{lem}
	\label{lem:S5}
	The graph $S_5$ is movable.
\end{lem}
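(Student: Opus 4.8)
The plan is to exhibit an explicit proper flexible labeling of $S_5$ together with a rational one-parameter parametrization $\rho_t$ of an algebraic motion, and then to verify by direct computation that all thirteen edge lengths are constant along the motion and that $\rho_t$ is injective for all but finitely many $t$. Since the four general constructions of this section fail here---$S_5$ is not bipartite, as it contains the triangles $\{1,2,6\}$ and $\{5,6,7\}$ (see Figure~\ref{fig:constDistClosures}); no single NAC-coloring satisfies the hypothesis of Lemma~\ref{lem:oneNac}; no pair of NAC-colorings yields an injective embedding in $\RR^3$ as in Lemma~\ref{lem:twoNacs}; and $S_5$ admits no splitting into two movable subgraphs overlapping as required by Lemma~\ref{lem:combiningTwoGraphs}---the parametrization has to be produced by hand.

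To find it, I would first pin down the rigid triangle $\{5,6,7\}$ by fixing the edge $\{6,7\}$, which also determines the position of $5$. The remaining graph then has a single driving degree of freedom: taking the position of $0$ on its circle about $5$ (edge $\{0,5\}$) as the parameter, the vertices $3$ and $4$ are each recovered as an intersection of two circles, about $0$ and $7$, using the edges $\{0,3\},\{3,7\}$ and $\{0,4\},\{4,7\}$; then $1$ is the intersection of circles about $4$ and $6$ (edges $\{1,4\},\{1,6\}$) and $2$ the intersection of circles about $3$ and $6$ (edges $\{2,3\},\{2,6\}$). Every edge except $\{1,2\}$ is consumed by this propagation, so exactly one closing condition remains, namely that $\norm{\rho(1)-\rho(2)}$ stay constant along the family.

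The crux, and the step I expect to be the main obstacle, is to choose the edge lengths so that this closing condition holds identically in the parameter rather than only at isolated positions; a generic labeling is rigid, since $S_5$ has $13 = 2\cdot 8 - 3$ edges. Concretely, I would rationalize the construction by setting $t=\tan(\theta/2)$ for the angle of $0$, resolve the circle intersections by a consistent choice of branch (for instance by forcing suitable $4$-cycles through $7$, such as $\{1,4,7,6\}$ and $\{2,3,7,6\}$, to move as parallelograms, which keeps the construction rational), and then impose that $\norm{\rho(1)-\rho(2)}^2$ is independent of $t$; solving the resulting conditions on the lengths pins down an admissible labeling $\lambda$ and coordinates $\rho_t$ that are rational in $t$. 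Once these explicit formulas are in hand the verification is routine: substitute into the thirteen length equations to confirm constancy, and evaluate the distances between the non-adjacent pairs of vertices to check that none of them vanishes identically, so that $\rho_t$ is injective for all but finitely many $t$. This shows that $\lambda$ is a proper flexible labeling, whence $S_5$ is movable.
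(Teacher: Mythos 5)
Your overall strategy---an ad hoc explicit parametrized motion, adopted after observing that the four general constructions fail---is the same as the paper's, and your reduction to a single closing condition on the coupler distance $\norm{\rho(1)-\rho(2)}$ is a correct way to set the problem up (the edge count checks out: twelve edges are consumed by the propagation, one remains). The gap is that the existence of edge lengths making the closing condition hold \emph{identically} along the motion is precisely the content of the lemma, and you neither exhibit such lengths nor argue that the ``resulting conditions on the lengths'' are solvable. Worse, the one concrete degeneration you propose cannot work: if the $4$-cycles $(1,4,7,6)$ and $(2,3,7,6)$ are parallelograms, then $\rho(1)-\rho(6)=\rho(4)-\rho(7)$ and $\rho(2)-\rho(6)=\rho(3)-\rho(7)$, hence $\rho(1)-\rho(2)=\rho(4)-\rho(3)$, and the closing condition becomes constancy of $\norm{\rho(3)-\rho(4)}$. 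But $3$ and $4$ are circle-intersection points over the segment from $0$ to $7$, and $d=\norm{\rho(0)-\rho(7)}$ genuinely varies along your motion (you drive $0$ around $5$, and $\rho(5)\neq\rho(7)$ since $5$ and $7$ are adjacent). Writing $x_A=(a_1^2-a_2^2+d^2)/(2d)$ for a point at distances $a_1,a_2$ from $\rho(0),\rho(7)$ and expanding $\norm{\rho(3)-\rho(4)}^2$, one checks in every branch that this distance is constant in $d$ only if $\rho(3)=\rho(4)$. So the parallelogram branch forces a non-injective motion and you would have to backtrack.

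The paper's proof supplies the degeneracies that actually work, and they are instructive to compare with your guess: the two triangles you single out, $\{1,2,6\}$ and $\{5,6,7\}$, are collapsed onto lines; the two quadrilaterals $(1,4,7,6)$ and $(2,3,7,6)$ are taken to be \emph{anti}parallelograms (which equally keeps the branch choice rational, via reflection in a diagonal); the quadrilateral $(3,0,4,7)$ is a rhombus; and $(7,5,0,3)$, $(7,5,0,4)$ are deltoids. With these choices the whole motion is written in closed form in one angle $\theta$ and one length parameter $a>1$ (Figure~\ref{fig:S5}, in the relabelled vertices used there), and constancy of all thirteen lengths together with injectivity for generic $a$ is verified directly. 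Until you produce such an explicit solution---or at least prove that the closing condition, viewed as a polynomial identity in the motion parameter, admits a solution in the lengths with all vertices distinct---your argument establishes only the framework, not the result.
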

\begin{proof}
In order to construct a proper flexible labeling for the graph $S_5$, 
we assume the following: the triangles $(1,2,3)$ and $(1,4,5)$ are degenerated into lines,
the quadrilaterals $(1,4,6,2)$ and $(1,4,7,3)$ are antiparallelograms,
the quadrilateral  $(4,7,8,6)$ is a rhombus and 
the quadrilaterals $(4,5,8,7)$ and $(4,5,8,6)$ are deltoids, see Figure~\ref{fig:S5}.
We scale the lengths so that $\lambda_{1,4}=1$  and $\lambda_{1,2}=:a>1$.
Now, we define an injective realization $\rho_\theta$ parametrized by the position of vertex $4$.
Let
\begin{equation*}
	\rho_\theta(1) = (0,0), \quad \rho_\theta(2) = (-a,0), \quad \rho_\theta(3) = (a,0), \quad \rho_\theta(4) = (\cos\theta,\sin\theta)\,.
\end{equation*}
Since the coordinates of a missing vertex of an antiparallelogram 
can be obtained by folding the parallelogram with the same edges along a diagonal, we get
\begin{align*}
\rho_\theta(6) &=\left(-\frac{a^{3} + {\left(a^{2} - 1\right)} \cos\theta - a}{a^{2} + 2 a \cos\theta + 1}, 
		\frac{{\left(1 - a^{2}\right)} \sin\theta}{a^{2} + 2  a \cos\theta + 1}\right)\,, \\
\rho_\theta(7) &=\left(\phantom{-}\frac{a^{3} - {\left(a^{2} - 1\right)} \cos\theta - a}{a^{2} - 2 a \cos\theta + 1}, 
		 \frac{{\left(1 - a^{2}\right)} \sin\theta}{a^{2} - 2  a \cos\theta + 1}\right)\,.
\end{align*}
The intersection of the line given by $\rho_\theta(6)$ and $\rho_\theta(7)$ with the line given by $\rho_\theta(1)$ and~$\rho_\theta(4)$ gives
\begin{equation*}
\rho_\theta(5) = \frac{1 - a^{2}}{a^{2} + 1}(\cos\theta,\sin\theta)\,.
\end{equation*}
The position of $8$ can be easily obtained by the fact that $(4,7,8,6)$ is a rhombus:
\begin{equation*}
\rho_\theta(8) = \left( \frac{\left(\left(a^{2} -  1\right)^2 - 4 a^{2}\left(\sin\theta\right)^{2}\right) \cos\left(\theta\right)}
			{\left(a^{2} -  1\right)^2 + 4 a^{2} \left(\sin\theta\right)^{2}},
 \frac{{-\left(3  a^{4} - 4  a^{2} \left(\cos\theta\right)^{2} + 2  a^{2} - 1\right)} \sin\theta}
 			{\left(a^{2} -  1\right)^2 + 4 a^{2} \left(\sin\theta\right)^{2}}
 		\right)\,.
\end{equation*}
One can verify that the induced labeling $\lambda$ is independent of $\theta$ and hence it is flexible:
\begin{align*}
	\lambda_{1,4}&=\lambda_{2,6}=\lambda_{3,7}=1\,, \qquad \lambda_{2,3}=2a\,, \\
	\lambda_{1,2}&=\lambda_{1,3}=\lambda_{4,6}=\lambda_{4,7}=\lambda_{6,8}=\lambda_{7,8}=a \,, \\
	\lambda_{1,5}&=\frac{a^2-1}{a^2+1} \,, \qquad
	\lambda_{4,5}=\lambda_{5,8}=\frac{2a^2}{a^2+1} = \lambda_{1,5}+\lambda_{1,4}\,.
\end{align*}
The labeling is proper for a generic $a$.
\end{proof}
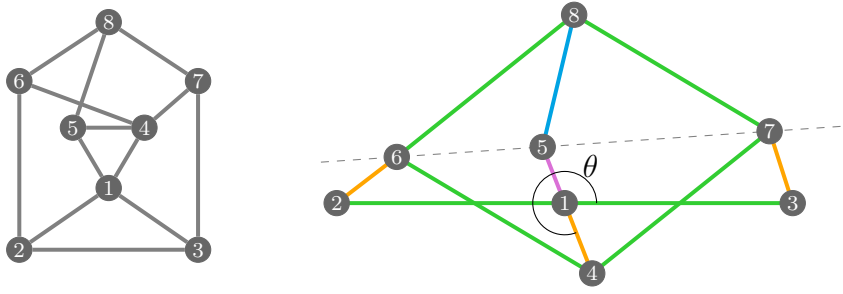
\begin{figure}[htb]
	\centering			
	\begin{tikzpicture}[scale=2]
		    \node[lnode] (0) at (0.000, 0.700) {8};
		    \node[lnode] (1) at (-0.588, -0.809) {2};
		    \node[lnode] (2) at (0.588, -0.809) {3};
		    \node[lnode] (3) at (0.588, 0.309) {7};
		    \node[lnode] (4) at (-0.588, 0.309) {6};
		    \node[lnode] (5) at (-0.235, 0) {5};
		    \node[lnode] (6) at (0.000, -0.400) {1};
		    \node[lnode] (7) at (0.235, 0) {4};
		    \draw[edge] (0)edge(3) (0)edge(4) (0)edge(5) (1)edge(2) (1)edge(4) (1)edge(6) (2)edge(3) (2)edge(6) (3)edge(7) (4)edge(7) (5)edge(6) (5)edge(7) (6)edge(7)  ;
	\begin{scope}[xshift=3cm,yshift=-0.5cm, scale=1.5]
		\coordinate (8) at (0.04149, 0.8324);
		\coordinate (3) at (1, 0.0);
		\coordinate (2) at (-1, 0.0);
		\coordinate (6) at (-0.7365, 0.2041);
		\coordinate (7) at (0.8987, 0.3175);
		\coordinate (5) at (-0.0966, 0.2485);
		\coordinate (1) at (0.0, 0.0);
		\coordinate (4) at (0.1207, -0.3106);
		\draw[gridl, dashed, shorten >= -1.cm, shorten <=-1.cm] (6)edge(7);
		\draw[edge, col1] (8)edge(6) (8)edge(7)  (3)edge(1)  (2)edge(1) (6)edge(4) (4)edge(7);
		\draw[edge, col3] (2)edge(6) (3)edge(7) (4)edge(1) ;
		\draw[edge, col2]   (5)edge(1);
		\draw[edge, col4]  (8)edge(5) ;
		\node[lnode] at (8) {8};
		\node[lnode] at (3) {3};
		\node[lnode] at (2) {2};
		\node[lnode] at (6) {6};
		\node[lnode] at (7) {7};
		\node[lnode] at (5) {5};
		\node[lnode] at (1) {1};
		\node[lnode] at (4) {4};
		\coordinate (9) at (0.3, 0);
		\tkzLabelAngle[color=black,pos=0.2](5,1,3){$\theta$};
		\tkzMarkAngle[color=black,size=0.14](3,1,4);
				\end{scope}
	\end{tikzpicture}
	\caption{The graph $S_5$ and its embedding inducing a proper flexible labeling. The same colors mean same edge lengths.}
	\label{fig:S5}
\end{figure}

Since all graphs in the list were proven to be movable, 
we can conclude that the necessary condition is also sufficient up to 8 vertices.
\begin{cor}
	Let $G$ be a graph with at most $8$ vertices.
	The graph~$G$ is movable if and only if the constant distance closure $\cdc{G}$ is not complete.
\end{cor}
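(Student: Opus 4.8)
The plan is to prove the two implications separately, the first being immediate and the second reducing everything to Theorem~\ref{thm:listEightVertices} together with the explicit constructions of Section~\ref{sec:constructions}. For the necessary direction I would simply invoke Corollary~\ref{cor:necesarryCondProper}: if $\cdc{G}$ is the complete graph then $G$ is not movable, so a movable graph must have non-complete constant distance closure.

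For sufficiency I assume $\cdc{G}$ is not complete and aim to exhibit $G$ as movable. First I dispose of the non-Laman case: if $G$ is not spanned by a Laman graph, then a generic labeling is flexible and its generic realizations are injective, so $G$ is movable outright; thus I may assume $G$ has a spanning Laman subgraph. The goal is then to bring $G$ into the scope of Theorem~\ref{thm:listEightVertices}, whose hypothesis is that $\cdc{G}$ has no vertex of degree two. To achieve this I repeatedly delete vertices of degree two: by Lemma~\ref{lem:degTwo} each deletion preserves movability, and since deleting a degree-two vertex of a Laman graph is the inverse of a Henneberg~I step, the spanning-Laman property (hence minimum degree at least two) is preserved as well. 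Iterating until no degree-two vertex remains yields a graph $\hat{G}$ of minimum degree at least three with $G$ movable iff $\hat{G}$ movable. Because $\cdc{\hat{G}}\supseteq \hat{G}$, the closure $\cdc{\hat{G}}$ then automatically has minimum degree at least three, so it has no vertex of degree two and the hypotheses of Theorem~\ref{thm:listEightVertices} are met.

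The main obstacle is checking that this reduction keeps the closure non-complete, i.e.\ that if $u$ has degree two in $G$ and $\cdc{G}$ is not complete, then $\cdc{G\setminus u}$ is not complete. I would prove the contrapositive: suppose $\cdc{G\setminus u}$ is complete on $V_G\setminus\{u\}$; by Lemma~\ref{lem:constDistClosureSubgraph} this complete graph sits inside $\cdc{G}$, and in any NAC-coloring of $\cdc{G}$ it must be monochromatic, since in a complete graph on at least three vertices every triangle, having only three edges, is forced to be unicolor. Writing $v,w$ for the two neighbours of $u$, the triangle $u,v,w$ then forces the edges $uv$ and $uw$ to carry that same colour, so for every further vertex $x$ the path $u,v,x$ is unicolor in every NAC-coloring; hence $ux\in\upairs{\cdc{G}}$, which (as $\cdc{G}$ is already closed) forces $\cdc{G}$ to be complete. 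This triangle/monochromaticity argument is the one genuinely new step; everything else is bookkeeping. Applying it along the chain of deletions shows $\cdc{\hat{G}}$ is not complete.

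Finally I would close the argument. By Theorem~\ref{thm:listEightVertices}, $\cdc{\hat{G}}$ is a spanning subgraph of one of $K_{3,3},K_{3,4},K_{3,5},K_{4,4}$ or of $L_1,\dots,L_6,Q_1,\dots,Q_6,S_1,\dots,S_5$. Each of these maximal graphs is movable: the complete bipartite ones by Lemma~\ref{lem:bipartiteGraphs}, the $L_i$ by Corollary~\ref{cor:oneNacGraphs}, the $Q_i$ by Corollary~\ref{cor:twoNacGraphs}, $S_1,\dots,S_4$ by Corollary~\ref{cor:movableByCombination}, and $S_5$ by Lemma~\ref{lem:S5}. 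A spanning subgraph of a movable graph is again movable, since restricting a proper flexible labeling to a subset of the edges keeps the very same injective realizations; hence $\cdc{\hat{G}}$ is movable. Then Theorem~\ref{thm:constDistClosureMovable} gives that $\hat{G}$ is movable, and unwinding the chain of degree-two deletions via Lemma~\ref{lem:degTwo} shows that $G$ is movable, as required.
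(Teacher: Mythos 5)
Your proof is correct and follows essentially the same route as the paper: Corollary~\ref{cor:necesarryCondProper} for necessity, reduction to the spanning-Laman, no-degree-two case via Lemma~\ref{lem:degTwo}, and then Theorem~\ref{thm:listEightVertices} combined with the movability results for the listed graphs and closure under spanning subgraphs and Theorem~\ref{thm:constDistClosureMovable}. The one point where you go beyond the paper is the explicit check that deleting a degree-two vertex preserves non-completeness of the constant distance closure (via the monochromatic-clique/triangle argument); the paper leaves this implicit, and your verification is correct and fills that small gap.
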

\begin{proof}
	We can assume that $G$ is spanned by a Laman graph,
	otherwise there exists a generic proper flexible labeling.
	By Lemma~\ref{lem:degTwo}, we can assume that $G$ has no vertex of degree two.
	Corollary~\ref{cor:necesarryCondProper} gives the necessary condition for movability. 
	For the opposite implication, 
	Theorem~\ref{thm:listEightVertices} lists 
	all constant distance closures that are not complete,
	Lemma~\ref{lem:bipartiteGraphs} and \ref{lem:S5}, and Corollary~\ref{cor:oneNacGraphs},
	 \ref{cor:twoNacGraphs} and \ref{cor:movableByCombination}
	show that all these graphs are movable. Hence, also all their subgraphs are movable.
\end{proof}

Based on the previous corollary, one might want to conjecture that
the statement holds independently of the number of vertices.
Nevertheless, the graph $G_{25}$ in Figure~\ref{fig:25vertexGraph} serves as a counter example.

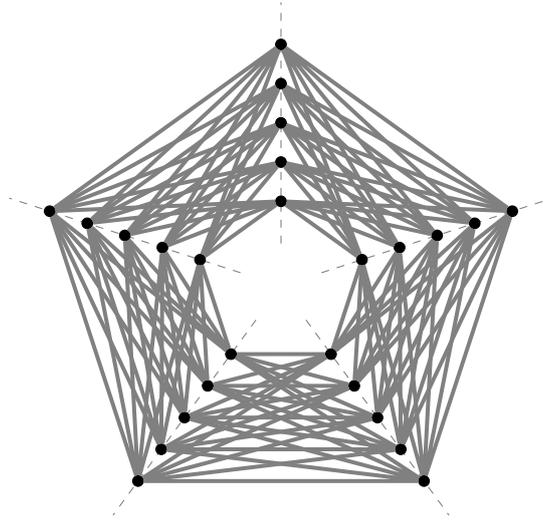
\begin{figure}[htb]
  \begin{center}
    \begin{tikzpicture}[scale=0.8]
    
    	\pgfmathsetmacro\cosA{cos(90)};
		\pgfmathsetmacro\sinA{sin(90)};
		\pgfmathsetmacro\cosB{cos(162)};
		\pgfmathsetmacro\sinB{sin(162)};
		\pgfmathsetmacro\cosC{cos(234)};
		\pgfmathsetmacro\sinC{sin(234)};
		\pgfmathsetmacro\cosD{cos(306)};
		\pgfmathsetmacro\sinD{sin(306)};
		\pgfmathsetmacro\cosE{cos(18)};
		\pgfmathsetmacro\sinE{sin(18)};
		\pgfmathsetmacro\rA{1.4};
		\pgfmathsetmacro\rB{2.05};
		\pgfmathsetmacro\rC{2.7};
		\pgfmathsetmacro\rD{3.35};
		\pgfmathsetmacro\rE{4.0};
		\pgfmathsetmacro\rF{4.7};
		\pgfmathsetmacro\rZ{0.7};
		
		\draw[gridl, dashed] (\rZ*\cosA, \rZ*\sinA)to(\rF*\cosA, \rF*\sinA);
		\draw[gridl, dashed] (\rZ*\cosB, \rZ*\sinB)to(\rF*\cosB, \rF*\sinB);
		\draw[gridl, dashed] (\rZ*\cosC, \rZ*\sinC)to(\rF*\cosC, \rF*\sinC);
		\draw[gridl, dashed] (\rZ*\cosD, \rZ*\sinD)to(\rF*\cosD, \rF*\sinD);
		\draw[gridl, dashed] (\rZ*\cosE, \rZ*\sinE)to(\rF*\cosE, \rF*\sinE);
		
		\node[vertex] (A1) at (\rA*\cosA, \rA*\sinA) {};
		\node[vertex] (A2) at (\rB*\cosA, \rB*\sinA) {};
		\node[vertex] (A3) at (\rC*\cosA, \rC*\sinA) {};
		\node[vertex] (A4) at (\rD*\cosA, \rD*\sinA) {};
		\node[vertex] (A5) at (\rE*\cosA, \rE*\sinA) {};

		\node[vertex] (B1) at (\rA*\cosB, \rA*\sinB) {};
		\node[vertex] (B2) at (\rB*\cosB, \rB*\sinB) {};
		\node[vertex] (B3) at (\rC*\cosB, \rC*\sinB) {};
		\node[vertex] (B4) at (\rD*\cosB, \rD*\sinB) {};
		\node[vertex] (B5) at (\rE*\cosB, \rE*\sinB) {};
		
		\node[vertex] (C1) at (\rA*\cosC, \rA*\sinC) {};
		\node[vertex] (C2) at (\rB*\cosC, \rB*\sinC) {};
		\node[vertex] (C3) at (\rC*\cosC, \rC*\sinC) {};
		\node[vertex] (C4) at (\rD*\cosC, \rD*\sinC) {};
		\node[vertex] (C5) at (\rE*\cosC, \rE*\sinC) {};
		
		\node[vertex] (D1) at (\rA*\cosD, \rA*\sinD) {};
		\node[vertex] (D2) at (\rB*\cosD, \rB*\sinD) {};
		\node[vertex] (D3) at (\rC*\cosD, \rC*\sinD) {};
		\node[vertex] (D4) at (\rD*\cosD, \rD*\sinD) {};
		\node[vertex] (D5) at (\rE*\cosD, \rE*\sinD) {};
		
		\node[vertex] (E1) at (\rA*\cosE, \rA*\sinE) {};
		\node[vertex] (E2) at (\rB*\cosE, \rB*\sinE) {};
		\node[vertex] (E3) at (\rC*\cosE, \rC*\sinE) {};
		\node[vertex] (E4) at (\rD*\cosE, \rD*\sinE) {};
		\node[vertex] (E5) at (\rE*\cosE, \rE*\sinE) {};
		
		\draw[edge] (A1)to(B1) (A1)to(B2) (A1)to(B3) (A1)to(B4) (A1)to(B5);
		\draw[edge] (A2)to(B1) (A2)to(B2) (A2)to(B3) (A2)to(B4) (A2)to(B5);
		\draw[edge] (A3)to(B1) (A3)to(B2) (A3)to(B3) (A3)to(B4) (A3)to(B5);
		\draw[edge] (A4)to(B1) (A4)to(B2) (A4)to(B3) (A4)to(B4) (A4)to(B5);
		\draw[edge] (A5)to(B1) (A5)to(B2) (A5)to(B3) (A5)to(B4) (A5)to(B5);
				
		\draw[edge] (A1)to(E1) (A1)to(E2) (A1)to(E3) (A1)to(E4) (A1)to(E5);
		\draw[edge] (A2)to(E1) (A2)to(E2) (A2)to(E3) (A2)to(E4) (A2)to(E5);
		\draw[edge] (A3)to(E1) (A3)to(E2) (A3)to(E3) (A3)to(E4) (A3)to(E5);
		\draw[edge] (A4)to(E1) (A4)to(E2) (A4)to(E3) (A4)to(E4) (A4)to(E5);
		\draw[edge] (A5)to(E1) (A5)to(E2) (A5)to(E3) (A5)to(E4) (A5)to(E5);
				
		\draw[edge] (C1)to(B1) (C1)to(B2) (C1)to(B3) (C1)to(B4) (C1)to(B5);
		\draw[edge] (C2)to(B1) (C2)to(B2) (C2)to(B3) (C2)to(B4) (C2)to(B5);
		\draw[edge] (C3)to(B1) (C3)to(B2) (C3)to(B3) (C3)to(B4) (C3)to(B5);
		\draw[edge] (C4)to(B1) (C4)to(B2) (C4)to(B3) (C4)to(B4) (C4)to(B5);
		\draw[edge] (C5)to(B1) (C5)to(B2) (C5)to(B3) (C5)to(B4) (C5)to(B5);
				
		\draw[edge] (D1)to(E1) (D1)to(E2) (D1)to(E3) (D1)to(E4) (D1)to(E5);
		\draw[edge] (D2)to(E1) (D2)to(E2) (D2)to(E3) (D2)to(E4) (D2)to(E5);
		\draw[edge] (D3)to(E1) (D3)to(E2) (D3)to(E3) (D3)to(E4) (D3)to(E5);
		\draw[edge] (D4)to(E1) (D4)to(E2) (D4)to(E3) (D4)to(E4) (D4)to(E5);
		\draw[edge] (D5)to(E1) (D5)to(E2) (D5)to(E3) (D5)to(E4) (D5)to(E5);
				
		\draw[edge] (D1)to(C1) (D1)to(C2) (D1)to(C3) (D1)to(C4) (D1)to(C5);
		\draw[edge] (D2)to(C1) (D2)to(C2) (D2)to(C3) (D2)to(C4) (D2)to(C5);
		\draw[edge] (D3)to(C1) (D3)to(C2) (D3)to(C3) (D3)to(C4) (D3)to(C5);
		\draw[edge] (D4)to(C1) (D4)to(C2) (D4)to(C3) (D4)to(C4) (D4)to(C5);
		\draw[edge] (D5)to(C1) (D5)to(C2) (D5)to(C3) (D5)to(C4) (D5)to(C5);
    \end{tikzpicture}

  \caption{Graph $G_{25}$}
  \label{fig:25vertexGraph}
  \end{center}
\end{figure}

This graph was proposed by Tibor Jord\'an as a counter example for some conjectures 
characterizing movable graphs within informal discussions with his students.
The constant distance closure of $G_{25}$ is the graph itself,
since there is no monochromatic path of length at least two: for every two incident edges $uv$ and $vw$, 
there exists a NAC-coloring $\delta$ such that $\delta(uv)\neq\delta(vw)$.
Namely, we can define $\delta$ by $\delta(e)=\blue{}$ if and only if $w$ is a vertex of $e$.
Hence, the necessary condition is satisfied.
An explanation that $G_{25}$ is not movable is the following:
it contains five subgraphs isomorphic to the bipartite graph~$K_{5,5}$,
each of them induced by the vertices on two neighboring lines in the figure.
The only way to construct a proper flexible labeling of $K_{5,5}$, with partition sets $V_1$ and $V_2$,
is placing the vertices of $V_1$ on a line and the vertices of $V_2$ on another line that is perpendicular to the first one~\cite{Maehara2001}.
Therefore, constructing a proper flexible labeling of $G_{25}$
would require that the vertices on every two neighboring lines in the figure are on perpendicular lines,
which is not possible.

\section*{Conclusion}
The newly introduced notion of active NAC-colorings bridges the combinatorial properties of a movable graph 
with its motion.
Motivated by invariance of the motion under adding new edges whose endpoints are connected 
by a path that is monochromatic in all active NAC-colorings,
the constant distance closure of a graph is defined purely combinatorially.
This augmented graph being non-complete serves as a necessary condition of movability of the original graph.
We focused on the graphs up to 8 vertices satisfying the condition and developed tools showing that all are movable.

Since it was shown that the necessary condition is not always sufficient,
the question on a (combinatorial) characterization of movability remains open.
Similarly, the characterization of all possible algebraic motions of a given graph is subject to future research.
A particularly interesting open problem is to determine possible subsets of active NAC-colorings
and then construct a corresponding proper flexible labeling.

\section*{Acknowledgment}
We thank Meera Sitharam for the discussion which led to Theorem~\ref{thm:treeDecomposable} and happened during the workshop on
Rigidity and Flexibility of Geometric Structures organized 
by the Erwin Sch\"odinger International Institute for Mathematics and Physics in Vienna in September~2018.
Furthermore, we thank Tibor Jord\'an for discussions on the counterexample.


\end{document}